%% file: article4.tex
\documentclass[11pt,twoside]{article}

\newcommand{\HeadTitle}{Structure and Zero Asymptotics of Differential Operators Associated with \texorpdfstring{\({\Xi}_n\)}{Xi_n} and \texorpdfstring{\({\Lambda}_n\)}{Lambda_n}}

\newcommand{\HeadTitleTwo}{\begin{center}
\Large{\textit{Structure and Zero Asymptotics of Differential Operators Associated with \texorpdfstring{\({\Xi}_n\)}{Xi_n} and \texorpdfstring{\({\Lambda}_n\)}{Lambda_n}}}
\end{center}}

\input{preamble}
\input{font}

\begin{document}

\input{titlepage}

\begin{abstract}
We study the second-order differential operators \(\mathcal D_{\Xi}\) and \(\mathcal D_{\Lambda}\) associated with the rescaled polynomial families \((\widetilde{\Xi}_n)\) and \((\widetilde{\Lambda}_n)\), and more generally the polynomial sequences generated by iterating these operators from an arbitrary linear initial datum \(cx-d\).

We establish structural properties of \(\mathcal D_{\Xi}\) and \(\mathcal D_{\Lambda}\), including factorizations into first-order operators, weighted divergence forms, formal self-adjointness, and hypergeometric descriptions of the corresponding formal eigenvalue equations. We also show that both operators preserve hyperbolicity, preserve zeros in \((0,b)\) for \(b\ge 1\), and preserve proper position.

For the iterated polynomial sequences, we derive explicit closed formulae in terms of the auxiliary families \((\widetilde{\Xi}_n)\) and \((\widetilde{\Lambda}_n)\), prove strict interlacing of consecutive zeros under explicit conditions on \(d/c\), and obtain asymptotic formulae for the normalized logarithmic derivatives. As a consequence, the associated zero counting measures converge weakly to the same limiting probability measure as in the auxiliary case.
\end{abstract}

\vspace{0.2cm}

\paragraph{Notation.}
Throughout this manuscript we use the following conventions. 
We write $\mathbb{N}_0$ for the set of all non-negative integers and $\mathbb{N}$ for the set of positive integers. 
The symbols $A_n(x)$ and $B_n(x)$ denote respectively the Eulerian polynomials of type ~$A$ and ~$B$. 

\vspace{0.5cm}

\section*{Introduction}

In \cite{TallaWaffo2026arxiv2602.16761}, we introduced two families of even polynomials
\[
\Xi_n,\Lambda_n\in\mathbb{Q}[x],
\qquad n\ge 1,
\]
through integral representations of the normalized values
\[
\frac{\beta(2n)}{\pi^{2n-1}}
\qquad\text{and}\qquad
\frac{\zeta(2n+1)}{\pi^{2n}}.
\]
More precisely, for every \(n\in\mathbb{N}_0\), one has
\[
\frac{\beta(2n)}{\pi^{2n-1}}
=
\int_0^1 \frac{x\,\Xi_n(x)}{\sqrt{1-x^2}\operatorname{arctanh}(x)}\,dx,
\qquad
\frac{\zeta(2n+1)}{\pi^{2n}}
=
\int_0^1 \frac{x\,\Lambda_n(x)}{\operatorname{arctanh}(x)}\,dx.
\]
These polynomial families arise naturally from Malmsten-type integral representations and from rational representations of polylogarithms of negative order in terms of Eulerian polynomials.

\vspace{0.2cm}

A first remarkable feature of the sequences \((\Xi_n)_{n\ge 1}\) and \((\Lambda_n)_{n\ge 1}\) is that all their terms are even polynomials. It is therefore natural to pass from the variable \(x\) to \(x^2\), and to introduce the rescaled polynomial sequences
\[
\widetilde{\Xi}_n(x):=\Xi_n(\sqrt{x}),
\qquad
\widetilde{\Lambda}_n(x):=\Lambda_n(\sqrt{x}),
\]
so that
\[
\Xi_n(x)=\widetilde{\Xi}_n(x^2),
\qquad
\Lambda_n(x)=\widetilde{\Lambda}_n(x^2),
\qquad
\widetilde{\Xi}_n,\widetilde{\Lambda}_n\in\mathbb{Q}[x].
\]

\vspace{0.2cm}

This change of variable transforms the original even-polynomial setting into an ordinary polynomial one and makes visible the differential operators underlying the recursive construction.

\vspace{0.2cm}

The starting point is the pair of differential recurrences established in \cite{TallaWaffo2026arxiv2602.16761}:
\[
\left\{
\begin{aligned}
\Xi_{n+1}(x)
&=
-\frac{1}{8n(2n+1)}
\Biggl[
(x^2-1)^2\Xi_n''(x)
+\frac{2(x^2-1)(3x^2-1)}{x}\Xi_n'(x)
+(6x^2-5)\Xi_n(x)
\Biggr],\\[1.2ex]
\Lambda_{n+1}(x)
&=
-\frac{2^{2n+1}-1}{(2^{2n+3}-1)(2n+1)(2n+2)}
\Biggl[
(x^2-1)^2\Lambda_n''(x)
+\frac{2(x^2-1)(4x^2-1)}{x}\Lambda_n'(x)
+4(3x^2-2)\Lambda_n(x)
\Biggr].
\end{aligned}
\right.
\]
Using
\[
\left\{
\begin{aligned}
\Xi_n(x)&=\widetilde{\Xi}_n(x^2),&
\Xi_n'(x)&=2x\,\widetilde{\Xi}_n'(x^2),&
\Xi_n''(x)&=2\widetilde{\Xi}_n'(x^2)+4x^2\widetilde{\Xi}_n''(x^2),\\[0.8ex]
\Lambda_n(x)&=\widetilde{\Lambda}_n(x^2),&
\Lambda_n'(x)&=2x\,\widetilde{\Lambda}_n'(x^2),&
\Lambda_n''(x)&=2\widetilde{\Lambda}_n'(x^2)+4x^2\widetilde{\Lambda}_n''(x^2),
\end{aligned}
\right.
\]
and replacing \(x^2\) by \(x\), we obtain
\[
\left\{
\begin{aligned}
\widetilde{\Xi}_{n+1}(x)
&=
-\frac{1}{8n(2n+1)}
\Bigl[
4x(x-1)^2\widetilde{\Xi}_n''(x)
+2(x-1)(7x-3)\widetilde{\Xi}_n'(x)
+(6x-5)\widetilde{\Xi}_n(x)
\Bigr],\\[1.2ex]
\widetilde{\Lambda}_{n+1}(x)
&=
-\frac{2^{2n+1}-1}{(2^{2n+3}-1)(2n+1)(2n+2)}
\Bigl[
4x(x-1)^2\widetilde{\Lambda}_n''(x)
+6(x-1)(3x-1)\widetilde{\Lambda}_n'(x)
+4(3x-2)\widetilde{\Lambda}_n(x)
\Bigr].
\end{aligned}
\right.
\]

\vspace{0.2cm}

This leads to the differential operators
\[
\left\{
\begin{aligned}
\mathcal D_{\Xi}[f](x)
&:=
4x(x-1)^2 f''(x)
+2(x-1)(7x-3)f'(x)
+(6x-5)f(x),\\[0.8ex]
\mathcal D_{\Lambda}[f](x)
&:=
4x(x-1)^2 f''(x)
+6(x-1)(3x-1)f'(x)
+4(3x-2)f(x),
\end{aligned}
\right.
\]
for which the rescaled recurrences take the compact form
\[
\widetilde{\Xi}_{n+1}=a_n\,\mathcal D_{\Xi}[\widetilde{\Xi}_n],
\qquad
\widetilde{\Lambda}_{n+1}=b_n\,\mathcal D_{\Lambda}[\widetilde{\Lambda}_n],
\]
where
\[
a_n=-\frac{1}{8n(2n+1)},
\qquad
b_n=-\frac{2^{2n+1}-1}{(2^{2n+3}-1)(2n+1)(2n+2)}.
\]

\vspace{0.2cm}

The purpose of the present paper is to study these operators and the polynomial sequences generated by their iteration. In contrast with the constant initial values considered in \cite{TallaWaffo2026arxiv2602.16761}, we now start from the linear polynomial
\[
cx-d,
\qquad c, d\in\mathbb{R}.
\]

\vspace{0.2cm}

The first part is devoted to the intrinsic properties of the operators \(\mathcal D_{\Xi}\) and \(\mathcal D_{\Lambda}\), including their action on polynomial spaces, their effect on degrees and coefficients, and the structural features they preserve. The second part concerns the polynomial families generated by iterating these operators from the initial datum \(x-c\). Our aim is to understand how the algebraic properties of the operators govern the behavior of the resulting sequences, including their degree growth, coefficient structure, and zero distribution.

\vspace{0.2cm}

This operator-theoretic viewpoint provides a natural continuation of \cite{TallaWaffo2026arxiv2602.16761}. It shifts the emphasis from the explicit description of the original even polynomials to the structural study of the differential mechanisms that generate them and, more generally, of the new polynomial families arising from nonconstant initial data.

\part{Intrinsic properties of the operators \(\mathcal D_{\Xi}\) and \(\mathcal D_{\Lambda}\)}

\section{Basic properties of the operators}

We consider the second-order differential operators
\[
\left\{
\begin{aligned}
\mathcal D_{\Xi}[f](x)
&=
4x(x-1)^2 f''(x)
+2(x-1)(7x-3)f'(x)
+(6x-5)f(x),\\[0.8ex]
\mathcal D_{\Lambda}[f](x)
&=
4x(x-1)^2 f''(x)
+6(x-1)(3x-1)f'(x)
+4(3x-2)f(x).
\end{aligned}
\right.
\]
They act naturally on \(\mathbb R[x]\). Since the coefficients are polynomial and the highest-order coefficient has degree \(3\), both operators map \(\mathbb R_m[x]\) into \(\mathbb R_{m+1}[x]\) for every \(m\ge0\). In fact, they increase the degree of every nonzero polynomial exactly by one.

Their action on monomials is given by
\[
\mathcal D_{\Xi}[x^m]
=
(4m^2+10m+6)x^{m+1}
-(8m^2+12m+5)x^m
+(4m^2+2m)x^{m-1},
\]
and
\[
\mathcal D_{\Lambda}[x^m]
=
(4m^2+14m+12)x^{m+1}
-(8m^2+16m+8)x^m
+(4m^2+2m)x^{m-1},
\]
with the convention that the term in \(x^{m-1}\) is absent when \(m=0\). Consequently, if
\[
f(x)=a_d x^d+\cdots \qquad (a_d\neq0),
\]
then
\[
\deg\bigl(\mathcal D_{\Xi}[f]\bigr)=d+1,
\qquad
\deg\bigl(\mathcal D_{\Lambda}[f]\bigr)=d+1,
\]
and the leading coefficients are
\[
\operatorname{LC}\bigl(\mathcal D_{\Xi}[f]\bigr)
=
(4d^2+10d+6)a_d,
\qquad
\operatorname{LC}\bigl(\mathcal D_{\Lambda}[f]\bigr)
=
(4d^2+14d+12)a_d.
\]

At the special points \(x=0\) and \(x=1\), one finds
\[
\bigl(\mathcal D_{\Xi}[p]\bigr)(0)=-6p'(0)-5p(0),
\qquad
\bigl(\mathcal D_{\Xi}[p]\bigr)(1)=p(1),
\]
and
\[
\bigl(\mathcal D_{\Lambda}[p]\bigr)(0)=-6p'(0)-8p(0),
\qquad
\bigl(\mathcal D_{\Lambda}[p]\bigr)(1)=4p(1).
\]
In particular, both operators preserve the subspace
\[
(x-1)\mathbb R[x]=\{p\in\mathbb R[x]:p(1)=0\}.
\]
By contrast, vanishing at \(x=0\) is not preserved in general, since the values at the origin depend on both \(p(0)\) and \(p'(0)\).

\section{Factorization and formal weighted self-adjoint form}

We next record two structural properties of the operators
\[
\mathcal D_{\Xi}[f](x)
=
4x(x-1)^2 f''(x)
+2(x-1)(7x-3)f'(x)
+(6x-5)f(x),
\]
and
\[
\mathcal D_{\Lambda}[f](x)
=
4x(x-1)^2 f''(x)
+6(x-1)(3x-1)f'(x)
+4(3x-2)f(x).
\]
First, both operators admit factorizations into first-order differential operators. Second, they can be written in weighted divergence form, which yields a natural formal self-adjoint structure.

\vspace{0.2cm}

\begin{proposition}\label{prop:factorization}
For \(n\in\mathbb{N}_0\), define the differential operators \(A_n\) and \(B_n\) by
\[
A_n[f]:=2(x-1)D[f]+nf,
\qquad
B_n[f]:=2x(x-1)D[f]+((n+1)x-1)f,
\]
where \(D=\frac{d}{dx}\). Then
\[
\mathcal D_{\Xi}[f]=(A_1\circ B_1)[f],
\qquad
\mathcal D_{\Lambda}[f]=(A_2\circ B_2)[f].
\]
Equivalently,
\[
\mathcal D_{\Xi}=A_1\circ B_1,
\qquad
\mathcal D_{\Lambda}=A_2\circ B_2.
\]
\end{proposition}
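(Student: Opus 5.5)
The plan is to compute the composition \(A_n\circ B_n\) once for general \(n\in\mathbb N_0\), collect the coefficients of \(f''\), \(f'\) and \(f\), and then specialize to \(n=1\) and \(n=2\) and compare with the definitions of \(\mathcal D_{\Xi}\) and \(\mathcal D_{\Lambda}\). Since all operators involved have polynomial coefficients, it suffices to verify the identity for an arbitrary smooth (or polynomial) \(f\). No earlier result is needed beyond the definitions; the whole argument is a direct calculation with the product rule.

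\textbf{Step 1: differentiate \(B_n[f]\).} With \(g:=B_n[f]=2x(x-1)f'+((n+1)x-1)f\), the product rule gives
\[
g'=2x(x-1)f''+\bigl(2(2x-1)+(n+1)x-1\bigr)f'+(n+1)f .
\]

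\textbf{Step 2: apply \(A_n\).} We have \(A_n[g]=2(x-1)g'+ng\). Expanding this and grouping terms should yield
\[
A_n[B_n[f]]
=4x(x-1)^2f''+2(x-1)\bigl((2n+5)x-3\bigr)f'+\bigl((n+1)(n+2)x-(3n+2)\bigr)f .
\]
The coefficients arise as follows. The \(f'\) coefficient collects \(2(x-1)\bigl((n+5)x-3\bigr)\) from \(2(x-1)g'\) and \(2nx(x-1)\) from \(ng\). The \(f\) coefficient collects \(2(n+1)(x-1)\) and \(n\bigl((n+1)x-1\bigr)\).

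\textbf{Step 3: specialize.} For \(n=1\) the general formula gives
\[
4x(x-1)^2f''+2(x-1)(7x-3)f'+(6x-5)f=\mathcal D_{\Xi}[f].
\]
For \(n=2\) it gives
\[
4x(x-1)^2f''+2(x-1)(9x-3)f'+(12x-8)f=\mathcal D_{\Lambda}[f],
\]
because \(2(9x-3)=6(3x-1)\) and \(12x-8=4(3x-2)\).

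The only point requiring care is the bookkeeping in Step 2. In particular, the cross term \(2(x-1)\cdot 2(2x-1)f'\) comes from differentiating the coefficient \(2x(x-1)\) in \(B_n\), and it is easy to drop. As an independent sanity check, I would apply both sides to the monomial \(x^m\) and compare with the monomial formulas recorded in the section on basic properties of the operators.
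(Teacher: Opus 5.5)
Your proposal is correct and follows essentially the same route as the paper. The paper also computes $A_n\circ B_n$ once for general $n$, using the same intermediate derivative $D[B_n[f]]=2x(x-1)f''+((n+5)x-3)f'+(n+1)f$ and arriving at the same closed form $4x(x-1)^2f''+2(x-1)\bigl((2n+5)x-3\bigr)f'+\bigl((n+1)(n+2)x-(3n+2)\bigr)f$. It then specializes to $n=1$ and $n=2$, exactly as you do.
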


\begin{proof}
We compute, for \(n\in\mathbb{N}_0\),
\[
(A_n\circ B_n)[f]
=
A_n[B_n[f]]
=
2(x-1)D[B_n[f]]+n\,B_n[f].
\]
Since
\[
B_n[f]=2x(x-1)D[f]+((n+1)x-1)f,
\]
we obtain
\[
D[B_n[f]]
=
D\!\bigl[2x(x-1)D[f]+((n+1)x-1)f\bigr]
=
2x(x-1)D^2[f]+((n+5)x-3)D[f]+(n+1)f.
\]
Hence
\[
(A_n\circ B_n)[f]
=
2(x-1)\bigl(2x(x-1)D^2[f]+((n+5)x-3)D[f]+(n+1)f\bigr)
+
n\bigl(2x(x-1)D[f]+((n+1)x-1)f\bigr).
\]
Therefore,
\[
(A_n\circ B_n)[f]
=
4x(x-1)^2D^2[f]
+
2(x-1)\bigl((2n+5)x-3\bigr)D[f]
+
\bigl((n+1)(n+2)x-(3n+2)\bigr)f.
\]
For \(n=1\), this gives
\[
(A_1\circ B_1)[f]
=
4x(x-1)^2D^2[f]
+
2(x-1)(7x-3)D[f]
+
(6x-5)f
=
\mathcal D_{\Xi}[f].
\]
For \(n=2\), we obtain
\[
(A_2\circ B_2)[f]
=
4x(x-1)^2D^2[f]
+
6(x-1)(3x-1)D[f]
+
4(3x-2)f
=
\mathcal D_{\Lambda}[f].
\]
This proves the result.
\end{proof}

\begin{remark}
The above factorizations reduce both second-order operators to compositions of first-order operators. This will be useful in the analysis of structural properties of the polynomial sequences generated by iteration.
\end{remark}

\vspace{0.2cm}

We now show that both operators admit Sturm--Liouville-type representations.

\begin{proposition}\label{prop:weighted-form}
The operator \(\mathcal D_{\Xi}\) can be written in the form
\[
\mathcal D_{\Xi}[f]
=
x^{-1/2}
\Bigl(
\bigl(4x^{3/2}(x-1)^2f'\bigr)'
+
x^{1/2}(6x-5)f
\Bigr),
\]
whereas \(\mathcal D_{\Lambda}\) admits the representation
\[
\mathcal D_{\Lambda}[f]
=
\bigl(x^{1/2}(1-x)\bigr)^{-1}
\Bigl(
\bigl(4x^{3/2}(1-x)^3f'\bigr)'
+
4x^{1/2}(1-x)(3x-2)f
\Bigr).
\]
\end{proposition}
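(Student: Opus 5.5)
The plan is a direct verification by expanding the divergence-form expressions with the product rule and comparing coefficients with the definitions of \(\mathcal D_{\Xi}\) and \(\mathcal D_{\Lambda}\). We work on \(0<x<1\), where \(x^{1/2}\) and \(1-x\) are positive and the prefactors \(x^{-1/2}\) and \(\bigl(x^{1/2}(1-x)\bigr)^{-1}\) make sense. Both sides are polynomial in \(f,f',f''\) with coefficients analytic on that interval, so the identities then hold as formal operator identities.

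It helps to explain where the weights come from, since this is the only non-mechanical point. For an operator \(p f''+q f'+r f\), write it as \(\frac1w\bigl((wp f')'+w r f\bigr)\). This requires \((wp)'=wq\), that is, \(w'/w=(q-p')/p\). For \(\mathcal D_{\Xi}\) we have \(p=4x(x-1)^2\), \(p'=4(x-1)(3x-1)\), and \(q-p'=2(x-1)(7x-3)-4(x-1)(3x-1)=2(x-1)^2\). Hence \(w'/w=1/(2x)\), so \(w=x^{1/2}\) and \(wp=4x^{3/2}(x-1)^2\). For \(\mathcal D_{\Lambda}\), \(q-p'=6(x-1)(3x-1)-4(x-1)(3x-1)=2(x-1)(3x-1)\). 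Hence \(w'/w=(3x-1)/(2x(x-1))=\frac{1}{2x}+\frac{1}{x-1}\), giving \(w=x^{1/2}(1-x)\) up to a constant and \(wp=4x^{3/2}(1-x)^3\).

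With these weights in hand, the verification is two short product-rule computations. For \(\Xi\):
\[
\bigl(4x^{3/2}(x-1)^2f'\bigr)'=4x^{3/2}(x-1)^2f''+\bigl(6x^{1/2}(x-1)^2+8x^{3/2}(x-1)\bigr)f'.
\]
Multiplying by \(x^{-1/2}\) gives \(4x(x-1)^2f''+(x-1)(14x-6)f'\), and the zero-order term \(x^{-1/2}\cdot x^{1/2}(6x-5)f\) is \((6x-5)f\). For \(\Lambda\):
\[
\bigl(4x^{3/2}(1-x)^3f'\bigr)'=4x^{3/2}(1-x)^3f''+\bigl(6x^{1/2}(1-x)^3-12x^{3/2}(1-x)^2\bigr)f'.
\]
Dividing by \(x^{1/2}(1-x)\) gives \(4x(x-1)^2f''+(1-x)(6-18x)f'=4x(x-1)^2f''+6(x-1)(3x-1)f'\), and the zero-order term becomes \(4(3x-2)f\).

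There is no genuine obstacle here. The only points needing care are the sign conventions \((1-x)^2=(x-1)^2\) and \((1-x)(6-18x)=6(x-1)(3x-1)\), and the remark that the identities, once checked on \((0,1)\), extend to all \(x>0\) with \(x\neq1\) (and formally everywhere) because after clearing the weights both sides are identical polynomial-coefficient operators.
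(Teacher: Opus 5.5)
Your proposal is correct and follows essentially the same route as the paper: you determine the weight from the condition $(wp)'=wq$, obtaining $x^{1/2}$ and $x^{1/2}(1-x)$, and then confirm the divergence form. The paper stops once the weight is found, leaving the identity $w^{-1}\bigl((wpf')'+wrf\bigr)=pf''+qf'+rf$ implicit. You also carry out the explicit product-rule expansions, and those expansions are correct.
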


\begin{proof}
We begin with \(\mathcal D_{\Xi}\). Write \(\mathcal D_{\Xi}=A(x)D^2+B(x)D+C(x)\), where
\[
A(x)=4x(x-1)^2,\qquad B(x)=2(x-1)(7x-3),\qquad C(x)=6x-5.
\]
We seek a weight \(\rho_{\Xi}\) such that \((\rho_{\Xi}A)'=\rho_{\Xi}B\). Since
\[
A'(x)=4(x-1)(3x-1),
\]
it follows that
\[
\frac{\rho_{\Xi}'}{\rho_{\Xi}}
=
\frac{B-A'}{A}
=
\frac{2(x-1)(7x-3)-4(x-1)(3x-1)}{4x(x-1)^2}
=
\frac{1}{2x}.
\]
Hence \(\rho_{\Xi}(x)=x^{1/2}\), up to a nonzero multiplicative constant. Therefore
\[
\mathcal D_{\Xi}[f]
=
\rho_{\Xi}^{-1}
\Bigl(
(\rho_{\Xi}A f')'
+
\rho_{\Xi}Cf
\Bigr),
\]
that is,
\[
\mathcal D_{\Xi}[f]
=
x^{-1/2}
\Bigl(
\bigl(4x^{3/2}(x-1)^2f'\bigr)'
+
x^{1/2}(6x-5)f
\Bigr).
\]

For \(\mathcal D_{\Lambda}\), write \(\mathcal D_{\Lambda}=A(x)D^2+B(x)D+C(x)\), where
\[
A(x)=4x(x-1)^2,\qquad B(x)=6(x-1)(3x-1),\qquad C(x)=4(3x-2).
\]
Again we require \((\rho_{\Lambda}A)'=\rho_{\Lambda}B\). Using
\[
A'(x)=4(x-1)(3x-1),
\]
we find
\[
\frac{\rho_{\Lambda}'}{\rho_{\Lambda}}
=
\frac{B-A'}{A}
=
\frac{6(x-1)(3x-1)-4(x-1)(3x-1)}{4x(x-1)^2}
=
\frac{3x-1}{2x(x-1)}.
\]
Thus \(\rho_{\Lambda}(x)=x^{1/2}(1-x)\), again up to a nonzero multiplicative constant. Hence
\[
\mathcal D_{\Lambda}[f]
=
\rho_{\Lambda}^{-1}
\Bigl(
(\rho_{\Lambda}A f')'
+
\rho_{\Lambda}Cf
\Bigr),
\]
which yields
\[
\mathcal D_{\Lambda}[f]
=
\bigl(x^{1/2}(1-x)\bigr)^{-1}
\Bigl(
\bigl(4x^{3/2}(1-x)^3f'\bigr)'
+
4x^{1/2}(1-x)(3x-2)f
\Bigr).
\]
This proves the proposition.
\end{proof}

\begin{corollary}\label{cor:formal-selfadjoint}
The operator \(\mathcal D_{\Xi}\) is formally self-adjoint with respect to the weighted inner product
\[
\langle f,g\rangle_{\Xi}
=
\int_0^1 f(x)g(x)\,x^{1/2}\,dx,
\]
whereas \(\mathcal D_{\Lambda}\) is formally self-adjoint with respect to
\[
\langle f,g\rangle_{\Lambda}
=
\int_0^1 f(x)g(x)\,x^{1/2}(1-x)\,dx.
\]
\end{corollary}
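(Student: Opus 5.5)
We use the weighted divergence form of Proposition~\ref{prop:weighted-form}: $\mathcal D=\rho^{-1}\bigl((pf')'+\rho Cf\bigr)$ with $p=\rho A$. For $\mathcal D_{\Xi}$ we have $\rho_{\Xi}=x^{1/2}$ and $p_{\Xi}=4x^{3/2}(x-1)^2$. For $\mathcal D_{\Lambda}$ we have $\rho_{\Lambda}=x^{1/2}(1-x)$ and $p_{\Lambda}=4x^{3/2}(1-x)^3$. Formal self-adjointness will mean that $\langle \mathcal D f,g\rangle=\langle f,\mathcal D g\rangle$ for all polynomials $f,g$ (more generally, for $f,g\in C^2[0,1]$). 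Both weights are positive and integrable on $(0,1)$, so the inner products are well defined on polynomials.

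Fix $f,g$ and write $\mathcal D$, $\rho$, $p$, $C$ for either operator. Multiplying by $\rho$ cancels the factor $\rho^{-1}$, giving
\[
\langle \mathcal D f,g\rangle-\langle f,\mathcal D g\rangle
=\int_0^1\bigl((pf')'g-(pg')'f\bigr)\,dx,
\]
since the zeroth-order terms $\rho Cfg$ cancel. The integrand equals $\bigl(p(f'g-fg')\bigr)'$, which is the Lagrange identity and can be checked by expanding the product rule. Hence the difference is the boundary term $\bigl[p(x)\,(f'g-fg')(x)\bigr]_{0}^{1}$.

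It remains to show that this boundary term vanishes. Here $f'g-fg'$ is a polynomial, hence bounded on $[0,1]$, while $p$ vanishes at both endpoints. At $x=0$ we have $p(x)=O(x^{3/2})$. At $x=1$ we have $p(x)=O((1-x)^2)$ for $\Xi$ and $O((1-x)^3)$ for $\Lambda$. So the term is zero at both ends.

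The only step needing care is the integration over $[0,1]$ when the weight involves $x^{1/2}$ and the divergence form uses $x^{-1/2}$. We handle it by integrating over $[\varepsilon,1-\varepsilon]$ and letting $\varepsilon\to0$. The function $(pf')'=\rho\,(Af''+Bf')$ is $O(x^{1/2})$ near $0$, so it is integrable, and the limits are justified by dominated convergence. This endpoint analysis is the main point to check. Everything else is the standard Sturm--Liouville computation.
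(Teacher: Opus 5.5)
Your proposal is correct and follows the same route as the paper, which simply says the corollary follows immediately from the divergence forms of Proposition~\ref{prop:weighted-form}; you add the explicit Lagrange identity and the check that the boundary term $[p(f'g-fg')]_0^1$ vanishes, because $p=\rho A$ vanishes at both endpoints. Your $\varepsilon$-truncation is harmless but not needed, since $\rho\,(\mathcal D f)\,g$ is already continuous on $[0,1]$.
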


\begin{proof}
This follows immediately from the divergence-form representations established in \cref{prop:weighted-form}.
\end{proof}

\section{The eigenvalue equations}

We now collect several analytic features of the operators
\[
\mathcal D_{\Xi}[f](x)
=
4x(x-1)^2 f''(x)
+2(x-1)(7x-3)f'(x)
+(6x-5)f(x),
\]
and
\[
\mathcal D_{\Lambda}[f](x)
=
4x(x-1)^2 f''(x)
+6(x-1)(3x-1)f'(x)
+4(3x-2)f(x).
\]
In particular, we describe their weighted divergence form, the resulting formal self-adjointness, and the associated formal eigenvalue equations.

\subsection{Orthogonality of eigenfunctions}

The main consequence of formal self-adjointness is the orthogonality of eigenfunctions corresponding to distinct eigenvalues, provided the boundary term vanishes.

\begin{proposition}\label{prop:orthogonality}
Let \(\mathcal D\) be one of the operators \(\mathcal D_{\Xi}\) or \(\mathcal D_{\Lambda}\), and let \(\rho\) denote the corresponding weight. Suppose that \(f_n\) and \(f_m\) satisfy
\[
\mathcal D[f_n]=\lambda_n f_n,
\qquad
\mathcal D[f_m]=\lambda_m f_m,
\]
with \(\lambda_n\neq\lambda_m\). Assume moreover that the boundary term
\[
\Bigl[p(x)\bigl(f_mf_n'-f_nf_m'\bigr)\Bigr]_{0}^{1}
\]
vanishes, where \(p(x)\) is the coefficient of \(f'\) in the divergence form
\[
\mathcal D[f]
=
\rho(x)^{-1}\bigl((p(x)f'(x))'+q(x)f(x)\bigr).
\]
Then
\[
\int_0^1 \rho(x)\,f_n(x)f_m(x)\,dx=0.
\]
\end{proposition}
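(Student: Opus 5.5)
The argument is the classical Lagrange-identity computation for Sturm--Liouville operators. By \cref{prop:weighted-form}, for either operator we may write
\[
\rho(x)\,\mathcal D[f]=(p f')'+q f,
\]
with \(\rho=x^{1/2}\), \(p=4x^{3/2}(x-1)^2\) and \(q=x^{1/2}(6x-5)\) in the \(\Xi\) case, and \(\rho=x^{1/2}(1-x)\), \(p=4x^{3/2}(1-x)^3\) and \(q=4x^{1/2}(1-x)(3x-2)\) in the \(\Lambda\) case.

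First we multiply the two eigenvalue equations crosswise. Multiplying \(\rho\,\mathcal D[f_n]=\lambda_n\rho f_n\) by \(f_m\), multiplying \(\rho\,\mathcal D[f_m]=\lambda_m\rho f_m\) by \(f_n\), and subtracting, the zeroth-order terms \(q f_nf_m\) cancel. This leaves
\[
(\lambda_n-\lambda_m)\rho f_nf_m
= f_m(pf_n')'-f_n(pf_m')'.
\]
Next we observe the pointwise Lagrange identity
\[
f_m(pf_n')'-f_n(pf_m')' = \bigl(p\,(f_mf_n'-f_nf_m')\bigr)',
\]
which holds because the cross terms \(pf_m'f_n'\) cancel when the right-hand side is differentiated by the product rule.

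We then integrate over \([\varepsilon,1-\varepsilon]\) and let \(\varepsilon\to0^+\). The right-hand side gives \(\bigl[p(f_mf_n'-f_nf_m')\bigr]_0^1\), understood as the limit of the boundary values at \(\varepsilon\) and \(1-\varepsilon\). By hypothesis this quantity vanishes, so
\[
(\lambda_n-\lambda_m)\int_0^1\rho f_nf_m\,dx=0.
\]
Dividing by \(\lambda_n-\lambda_m\neq0\) gives the claim.

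The only delicate step is justifying the passage to the endpoints, since \(p\) and \(\rho\) vanish or are singular at \(0\) and \(1\). I would interpret the boundary term as the limit \(\varepsilon\to0^+\) of the expression evaluated at \(1-\varepsilon\) minus that at \(\varepsilon\). I would also implicitly require \(\rho f_nf_m\in L^1(0,1)\), which holds for instance if \(f_n\) and \(f_m\) are continuous on \([0,1]\), because \(\rho\) is bounded there. With these readings, monotone or dominated convergence lets the identity on \([\varepsilon,1-\varepsilon]\) pass to the limit. The statement is thus understood to include the vanishing of this limit as its hypothesis.
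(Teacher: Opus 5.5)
Your proposal is correct and follows the paper's argument. Both cross-multiply the two divergence-form eigenvalue equations, cancel the $q f_nf_m$ terms, and integrate the exact derivative $\bigl(p(f_mf_n'-f_nf_m')\bigr)'$ over $(0,1)$. The only difference is that you make the endpoint passage explicit, via integration over $[\varepsilon,1-\varepsilon]$ and the assumption $\rho f_nf_m\in L^1(0,1)$, whereas the paper leaves this implicit by integrating directly over $(0,1)$.
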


\begin{proof}
Write
\[
(pf_n')'+qf_n=\lambda_n \rho f_n,
\qquad
(pf_m')'+qf_m=\lambda_m \rho f_m.
\]
Multiplying the first equation by \(f_m\), the second by \(f_n\), and subtracting, we obtain
\[
f_m(pf_n')'-f_n(pf_m')'=(\lambda_n-\lambda_m)\rho f_nf_m.
\]
Integrating over \((0,1)\) yields
\[
\Bigl[p(x)\bigl(f_mf_n'-f_nf_m'\bigr)\Bigr]_{0}^{1}
=
(\lambda_n-\lambda_m)\int_0^1 \rho(x)f_n(x)f_m(x)\,dx.
\]
If the boundary term vanishes and \(\lambda_n\neq\lambda_m\), the conclusion follows.
\end{proof}

\subsection{The eigenvalue equation for \texorpdfstring{$\mathcal D_{\Xi}$}{DXi}}

We now consider the formal eigenvalue problem \(\mathcal D_{\Xi}f=\lambda f\), that is,
\[
4x(1-x)^2f''+2(x-1)(7x-3)f'+(6x-5-\lambda)f=0.
\]
To reduce this equation to hypergeometric form, set \(f(x)=(1-x)^\alpha u(x)\). A direct computation shows that the equation reduces to Gauss's hypergeometric equation provided \(\lambda=(2\alpha+1)^2\). Then \(u\) satisfies
\[
x(1-x)u''+\left(\frac32-\frac{4\alpha+7}{2}x\right)u'
-(\alpha+1)\left(\alpha+\frac32\right)u=0,
\]
which is of the standard form
\[
x(1-x)u''+[c-(a+b+1)x]u'-abu=0
\]
with parameters
\[
a=\alpha+1,\qquad b=\alpha+\frac32,\qquad c=\frac32.
\]

\begin{proposition}\label{prop:eigen-DXi}
The general formal solution of the eigenvalue equation \(\mathcal D_{\Xi}f=\lambda f\) is
\[
f(x)=(1-x)^\alpha\left[
C_1\,{}_2F_1\!\left(\alpha+1,\alpha+\frac32;\frac32;x\right)
+
C_2\,x^{-1/2}\,{}_2F_1\!\left(\alpha+\frac12,\alpha+1;\frac12;x\right)
\right],
\]
where \(\lambda=(2\alpha+1)^2\).
\end{proposition}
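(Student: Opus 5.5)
The plan is to make the reduction sketched just before \cref{prop:eigen-DXi} explicit, and then quote Kummer's standard basis of solutions of Gauss's equation at the regular singular point \(x=0\).

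\textbf{Step 1 (reduction).} Divide the eigenvalue equation by \(4(1-x)\), which gives
\[
x(1-x)f''-\tfrac{7x-3}{2}f'+\frac{6x-5-\lambda}{4(1-x)}f=0 .
\]
Then substitute \(f=(1-x)^\alpha u\), using
\[
f'=(1-x)^\alpha\Bigl(u'-\tfrac{\alpha}{1-x}u\Bigr),
\qquad
f''=(1-x)^\alpha\Bigl(u''-\tfrac{2\alpha}{1-x}u'+\tfrac{\alpha(\alpha-1)}{(1-x)^2}u\Bigr).
\]
After cancelling \((1-x)^\alpha\), the coefficient of \(u'\) becomes \(-2\alpha x-\tfrac{7x-3}{2}=\tfrac32-\tfrac{4\alpha+7}{2}x\). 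The coefficient of \(u\) becomes
\[
\frac{(4\alpha^2+10\alpha+6)x-(6\alpha+5+\lambda)}{4(1-x)}.
\]
This numerator equals \(-(4\alpha^2+10\alpha+6)(1-x)\) exactly when \(\lambda=4\alpha^2+4\alpha+1=(2\alpha+1)^2\). In that case the coefficient of \(u\) reduces to the constant \(-(\alpha+1)(\alpha+\tfrac32)\), and the \((1-x)^{-1}\) singularity disappears. So \(u\) satisfies the hypergeometric equation with \(a=\alpha+1\), \(b=\alpha+\tfrac32\), \(c=\tfrac32\). Conversely, for any \(\lambda\) one may choose \(\alpha\) as a root of \((2\alpha+1)^2=\lambda\), so the parametrization loses no generality.

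\textbf{Step 2 (solutions).} Since \(c=\tfrac32\notin\mathbb Z\), the indicial exponents at \(0\) are \(0\) and \(1-c=-\tfrac12\), which do not differ by an integer. The classical Frobenius/Kummer theory then gives two linearly independent solutions: \({}_2F_1(a,b;c;x)\) and \(x^{1-c}\,{}_2F_1(a-c+1,b-c+1;2-c;x)\). With our parameters the second one is \(x^{-1/2}{}_2F_1(\alpha+\tfrac12,\alpha+1;\tfrac12;x)\). Their independence is visible from their leading behaviour, \(1\) versus \(x^{-1/2}\). Since the equation is linear of second order, every formal solution \(u\) is a combination \(C_1u_1+C_2u_2\).

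\textbf{Step 3 (back-substitution).} Multiplying by \((1-x)^\alpha\) gives the stated formula for \(f\). The substitution is invertible on \((0,1)\), so this is the general solution.

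The only real obstacle is the bookkeeping in Step 1: one has to check that the \(u\)-coefficient collapses precisely under \(\lambda=(2\alpha+1)^2\). The rest is quoting standard hypergeometric theory.
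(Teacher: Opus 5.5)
Your proposal is correct and takes the same approach as the paper: substitute \(f=(1-x)^\alpha u\), note that the coefficient of \(u\) collapses to the constant \(-(\alpha+1)(\alpha+\tfrac32)\) exactly when \(\lambda=(2\alpha+1)^2\), and take Kummer's two solutions at \(x=0\), which applies because \(c=\tfrac32\notin\mathbb Z\). Your version also writes out the coefficient computation that the paper only calls ``a direct computation,'' and I checked that it is right.
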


\begin{proof}
After the substitution \(f(x)=(1-x)^\alpha u(x)\), the equation becomes Gauss's hypergeometric equation with the above parameters. The general solution is therefore the standard hypergeometric linear combination, and multiplying back by \((1-x)^\alpha\) yields the result.
\end{proof}

\begin{remark}
The local exponents at \(x=0\) are obtained by substituting \(f(x)\sim x^r\), which gives \(2r(2r+1)=0\). Hence the local exponents at \(x=0\) are \(r=0\) and \(r=-\frac12\). At \(x=1\), setting \(t=1-x\) and substituting \(f(x)\sim t^r\) yields the indicial equation
\[
4r^2+4r+1-\lambda=0,
\]
so that
\[
r=\frac{-1\pm\sqrt{\lambda}}{2}.
\]
\end{remark}

\subsection{The eigenvalue equation for \texorpdfstring{$\mathcal D_{\Lambda}$}{DLambda}}

We next consider
\[
\mathcal D_{\Lambda}g=\mu g,
\]
that is,
\[
4x(1-x)^2g''+6(x-1)(3x-1)g'+(12x-8-\mu)g=0.
\]

Set
\[
g(x)=(1-x)^\beta v(x).
\]
Then the equation reduces to hypergeometric form provided
\[
\mu=4(\beta+1)^2.
\]
The function \(v\) satisfies
\[
x(1-x)v''+\left(\frac32-\frac{4\beta+9}{2}x\right)v'
-\left(\beta+\frac32\right)(\beta+2)v=0.
\]
Thus the corresponding parameters are
\[
a=\beta+\frac32,
\qquad
b=\beta+2,
\qquad
c=\frac32.
\]

\begin{proposition}\label{prop:eigen-DLambda}
The general formal solution of the eigenvalue equation
\[
\mathcal D_{\Lambda}g=\mu g
\]
is
\[
g(x)=(1-x)^\beta\left[
C_1\,{}_2F_1\!\left(\beta+\frac32,\beta+2;\frac32;x\right)
+
C_2\,x^{-1/2}\,{}_2F_1\!\left(\beta+1,\beta+\frac32;\frac12;x\right)
\right],
\]
where
\[
\mu=4(\beta+1)^2.
\]
\end{proposition}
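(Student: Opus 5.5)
The plan is to mirror the argument for \cref{prop:eigen-DXi}: substitute \(g(x)=(1-x)^\beta v(x)\), show that the resulting equation for \(v\) is Gauss's hypergeometric equation exactly when \(\mu=4(\beta+1)^2\), and then use the standard pair of solutions around \(x=0\).

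\emph{Rewriting the equation.} Set \(t=1-x\), so \(x-1=-t\) and the equation reads
\[
4x t^2 g''-6t(3x-1)g'+(12x-8-\mu)g=0 .
\]
With \(g=t^\beta v\) we have
\[
g'=-\beta t^{\beta-1}v+t^\beta v',\qquad
g''=\beta(\beta-1)t^{\beta-2}v-2\beta t^{\beta-1}v'+t^\beta v''.
\]
Substituting and dividing by \(t^\beta\) gives
\[
4xt^2v''-t\bigl(8\beta x+6(3x-1)\bigr)v'+\Bigl[(4\beta^2+14\beta+12)x-(6\beta+8+\mu)\Bigr]v=0 .
\]

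\emph{The divisibility condition.} This is the one point needing care. To divide the whole equation by \(4t\), the zeroth-order coefficient must vanish at \(x=1\). Writing it as
\[
-(4\beta^2+14\beta+12)\,t+\bigl(4\beta^2+8\beta+4-\mu\bigr),
\]
the remainder term vanishes precisely when \(\mu=4\beta^2+8\beta+4=4(\beta+1)^2\). Under this condition, dividing by \(4t\) yields
\[
x(1-x)v''+\Bigl(\tfrac32-\bigl(2\beta+\tfrac92\bigr)x\Bigr)v'-\bigl(\beta^2+\tfrac72\beta+3\bigr)v=0 .
\]
Since \(2\beta+\tfrac92=\tfrac{4\beta+9}{2}\) and \(\beta^2+\tfrac72\beta+3=(\beta+\tfrac32)(\beta+2)\), this is the displayed hypergeometric equation. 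Its parameters are \(a=\beta+\tfrac32\), \(b=\beta+2\), \(c=\tfrac32\), with \(a+b+1=2\beta+\tfrac92\) as required.

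\emph{General solution.} Because \(c=\tfrac32\notin\mathbb Z\), Kummer's standard pair about \(x=0\) gives two independent solutions, namely \({}_2F_1(a,b;c;x)\) and \(x^{1-c}{}_2F_1(a-c+1,b-c+1;2-c;x)\). Here these are
\[
{}_2F_1\!\left(\beta+\tfrac32,\beta+2;\tfrac32;x\right)
\quad\text{and}\quad
x^{-1/2}{}_2F_1\!\left(\beta+1,\beta+\tfrac32;\tfrac12;x\right).
\]
Multiplying back by \((1-x)^\beta\) gives the stated general formal solution. As a consistency check, the local exponents at \(0\) come out as \(0\) and \(-\tfrac12\), matching the \(\mathcal D_{\Xi}\) case.
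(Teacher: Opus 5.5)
Your proposal is correct and takes the same route as the paper: substitute $g=(1-x)^\beta v$, observe that reduction to Gauss's equation with $a=\beta+\frac32$, $b=\beta+2$, $c=\frac32$ requires $\mu=4(\beta+1)^2$, and then take Kummer's standard pair of solutions at $x=0$. You also supply the explicit computation, including the condition that the zeroth-order coefficient vanish at $x=1$, which the paper leaves implicit by saying the proof is ``identical in spirit'' to the $\mathcal D_{\Xi}$ case.
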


\begin{proof}
The proof is identical in spirit to that of \cref{prop:eigen-DXi}. After the substitution \(g(x)=(1-x)^\beta v(x)\), one obtains a Gauss hypergeometric equation with the above parameters, and the general solution follows.
\end{proof}

\begin{remark}
As in the case of \(\mathcal D_{\Xi}\), the local exponents at \(x=0\) are \(r=0\) and \(r=-\frac12\). At \(x=1\), the indicial equation becomes
\[
4(r+1)^2-\mu=0,
\]
hence
\[
r=-1\pm\frac{\sqrt{\mu}}{2}.
\]
\end{remark}

\section{Hyperbolicity}

\begin{lemma}\label{lem:An-hyperbolicity}
For every \(n \in \mathbb{N}_0\), the differential operator
\[
A_n:=2(x-1)D+n
\]
preserves hyperbolicity. In particular, \(A_1\) and \(A_2\) preserve hyperbolicity.
\end{lemma}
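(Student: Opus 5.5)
The plan is to prove the statement directly from Rolle's theorem, applied to a suitable weighted version of \(f\). Let \(f\in\mathbb R[x]\) be hyperbolic of degree \(d\ge1\); constants are trivial, since \(A_n[c]=nc\). The key observation is the identity
\[
A_n[f](x)=2(x-1)f'(x)+nf(x)=2\,|x-1|^{1-n/2}\,\operatorname{sgn}(x-1)\,\frac{d}{dx}\Bigl(|x-1|^{n/2}f(x)\Bigr),\qquad x\neq1 .
\]
So on each of the half-lines \((-\infty,1)\) and \((1,\infty)\), the zeros of \(A_n[f]\) coincide with the critical points of \(g(x)=|x-1|^{n/2}f(x)\).

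The counting argument runs as follows.
\begin{itemize}
\item Suppose \(f\) has the root \(1\) with multiplicity \(m\ge0\), and its other roots, listed with multiplicity, are \(x_1\le\dots\le x_{d-m}\).
\item Write \(f=(x-1)^m h\) with \(h(1)\ne0\). Then
\[
A_n[f]=(x-1)^m\bigl((2m+n)h+2(x-1)h'\bigr)=(x-1)^m A_{n+2m}[h],
\]
so we may reduce to the case \(f(1)\neq0\).
\item A root of multiplicity \(k\) of \(f\) away from \(1\) is a root of multiplicity at least \(k-1\) of \(A_n[f]\), because both \(f\) and \(f'\) vanish to that order.
\item Between consecutive distinct roots lying on the same side of \(1\), Rolle's theorem applied to \(g\) gives an additional root.
\end{itemize}
This yields \(d-2\) roots when \(f\) has roots on both sides of \(1\), and \(d-1\) when all roots lie on one side.

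The remaining roots come from the behaviour of \(g\) at the ends of each half-line. When \(n>0\), \(g\) vanishes at \(x=1\), and \(g\) has a root at the largest root of \(f\) below \(1\) (if any) and at the smallest root above \(1\) (if any). Rolle on each such segment gives one root in \((x_j,1)\) and one in \((1,x_{j+1})\). Combined with the count above, this gives \(d\) real roots, which is the full degree since the leading coefficient of \(A_n[f]\) is \((2d+n)\operatorname{LC}(f)\neq0\). When all roots lie on one side, only one such segment exists, which already gives \(d\) roots.

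The case \(n=0\) needs separate treatment: \(A_0[f]=2(x-1)f'\), which is hyperbolic by Gauss–Lucas/Rolle, since \(f'\) is hyperbolic.

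The main obstacle is the bookkeeping when \(f\) has no roots on one side, or roots only at \(1\); the reduction \(A_n[f]=(x-1)^mA_{n+2m}[h]\) above is meant to handle the latter. As an alternative route, I would invoke the Pólya–Schur/Laguerre fact that for hyperbolic \(f\) and real \(\alpha\), the polynomial \(\alpha f+(x-1)f'\) is hyperbolic whenever \(\alpha\ge0\). With \(\alpha=n/2\) this is exactly the statement.
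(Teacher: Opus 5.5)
Your argument is correct, but it is not the route the paper takes. The paper's proof is essentially the ``alternative route'' you mention at the end. It translates $x=y+1$, so that $A_n$ becomes $T_n=2yD+n$, which acts diagonally on monomials: $T_n[y^k]=(2k+n)y^k$. Since $\sum_{k\ge0}(2k+n)z^k/k!=(2z+n)e^z$ lies in the Laguerre--P\'olya class with nonnegative Taylor coefficients (its only zero is $-n/2\le0$), the P\'olya--Schur theorem makes $(2k+n)_{k\ge0}$ a multiplier sequence, and translating back finishes the proof.

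Your main argument is instead the classical Rolle-type proof of Laguerre's theorem. The weight $|x-1|^{n/2}$ turns zeros of $A_n[f]$ off $x=1$ into critical points of $g$. The identity $A_n[f]=(x-1)^mA_{n+2m}[h]$ removes a root at $1$. The count $(d-s)+s=d$ then matches $\deg A_n[f]=d$: it consists of the multiplicities at the $s$ distinct roots, plus one Rolle point in each of the $s$ open gaps, where the gaps adjacent to $1$ come from $g(1)=0$ when $n>0$.

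The paper's proof is shorter and fits the P\'olya--Schur/Borcea--Br\"and\'en framework it uses throughout (for $B_n$ and for proper position). However, it relies on a deep classification theorem and says nothing about where the zeros are. Yours is self-contained, and because the count is exact it shows that each gap contains exactly one zero of $A_n[f]$, which is simple. For $f$ with simple zeros in $(0,1)$ and $n>0$, this already gives the interlacing $\lambda_1<\mu_1<\lambda_2<\cdots<\lambda_m<\mu_m<1$ of Lemma~\ref{lem:An-interlacing}. Both arguments work verbatim for every real $n\ge0$.
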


\begin{proof}
Let \(f\in \mathbb{R}[x]\) be hyperbolic, and define \(g(y):=f(y+1)\). Since translation preserves real-rootedness, \(g\) is again hyperbolic.

Setting \(x=y+1\), we obtain
\[
A_n[f](y+1)=2y\,g'(y)+n\,g(y).
\]
Thus it suffices to show that the operator
\[
T_n:=2yD+n
\]
preserves hyperbolicity on \(\mathbb{R}[y]\).

For each \(k\ge 0\), we have \(T_n[y^k]=(2k+n)y^k\), so \(T_n\) is diagonal in the monomial basis with diagonal sequence \(\gamma_k=2k+n\) for \(k\ge 0\). Its exponential generating function is
\[
\sum_{k=0}^\infty \gamma_k \frac{z^k}{k!}
=
\sum_{k=0}^\infty (2k+n)\frac{z^k}{k!}
=
(2z+n)e^z.
\]
Since \(n \in \mathbb{N}_0\), the polynomial \(2z+n\) has the unique zero \(-n/2\le 0\). Hence \((2z+n)e^z\) belongs to the Laguerre--Pólya class with only real nonpositive zeros. By the Pólya--Schur theorem (see, for example, \cite{BorceaBranden2009,RahmanSchmeisser2002}), the sequence \(\{\gamma_k\}_{k\ge0}\) is a multiplier sequence. Therefore \(T_n\) preserves hyperbolicity.

Finally, since \(A_n[f](x)=T_n[g](x-1)\), and translation preserves hyperbolicity, it follows that \(A_n[f]\) is hyperbolic. Hence \(A_n\) preserves hyperbolicity.
\end{proof}

\begin{lemma}\label{lem:Bn-hyperbolicity}
For \(n\in\mathbb{N}_0\), the differential operator
\[
B_n:=2x(x-1)D+((n+1)x-1)
\]
preserves hyperbolicity.
\end{lemma}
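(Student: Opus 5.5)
The plan is to write \(B_n\) as a weighted derivative and then apply Rolle's theorem, counting zeros. For \(x\in(0,1)\) we have
\[
\frac{d}{dx}\Bigl(x^{1/2}(1-x)^{-n/2}f(x)\Bigr)
=x^{1/2}(1-x)^{-n/2}\Bigl(f'+\Bigl(\tfrac{1}{2x}+\tfrac{n}{2(1-x)}\Bigr)f\Bigr).
\]
Multiplying by \(2x(x-1)\) gives
\[
2x(x-1)f'+\bigl((x-1)-nx\bigr)f .
\]
This is not exactly \(B_n\), so the first step is to fix the weight. I would look for \(w(x)=|x|^{\alpha}|x-1|^{\beta}\) with \(2x(x-1)\,w'/w=(n+1)x-1\). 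Partial fractions give
\[
\frac{(n+1)x-1}{2x(x-1)}=\frac{1}{2x}+\frac{n}{2(x-1)},
\]
so \(\alpha=1/2\) and \(\beta=n/2\). Hence
\[
B_n[f]=2x(x-1)\,w^{-1}(wf)',\qquad w=|x|^{1/2}|x-1|^{n/2},
\]
on each of the intervals \((-\infty,0)\), \((0,1)\), \((1,\infty)\).

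Next, let \(f\) be hyperbolic of degree \(d\). Its zeros are \(r_1\le\dots\le r_d\), with multiplicity \(m_0\) at \(0\) and \(m_1\) at \(1\). The degree of \(B_n[f]\) is \(d+1\), since its leading coefficient is \((2d+n+1)a_d\neq0\). We must therefore exhibit \(d+1\) real zeros counted with multiplicity. I would count as follows.

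\textbf{Multiple roots.} A root of \(f\) of multiplicity \(m\) gives a root of \(B_n[f]\) of multiplicity at least \(m-1\). At \(x=0\) or \(x=1\), a direct computation shows the multiplicity is at least \(m\), because \(2x(x-1)f'\) and the zeroth-order term both vanish to order \(m\).

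\textbf{Rolle between roots.} Between consecutive distinct roots lying in the same interval among \((-\infty,0)\), \((0,1)\), \((1,\infty)\), the function \(wf\) vanishes at both endpoints. Rolle's theorem then gives a zero of \((wf)'\), hence of \(B_n[f]\), strictly between them.

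\textbf{Boundary contributions.} For each of the points \(0\) and \(1\), we have \(w\to0\) there when \(n>0\) (at \(1\)) and always at \(0\), since the exponent is \(1/2\). So the points \(0\) and \(1\) act as extra zeros of \(wf\). This yields an additional Rolle zero between \(0\) and the nearest root on each side, and likewise for \(1\).

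\textbf{Points with \(f\neq0\).} Alternatively, evaluating directly gives \(B_n[f](0)=-f(0)\) and \(B_n[f](1)=nf(1)\). Near \(\pm\infty\), the behaviour \(wf\sim|x|^{d+1/2+n/2}\) has derivative of the same sign as \(f\) at infinity. Between the last root and \(+\infty\), one needs a sign argument instead of Rolle.

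The cleanest approach is to compare signs of \(B_n[f]\) at \(0\), at \(1\), and at \(\pm\infty\) with the signs of \(f\). This produces the missing zeros on the outermost intervals.

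\textbf{The count.} Summing these contributions, each gap between distinct roots gives one zero. The points \(0\) and \(1\) give extra zeros, which compensate for gaps that straddle \(0\) or \(1\), where Rolle cannot be applied since \(w\) is singular or changes its formula. One further zero comes from an end interval via the sign at infinity. The total should be \(d+1\).

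\textbf{Expected obstacle.} The main obstacle is the bookkeeping in the configurations where \(f\) has roots on only one side of \(0\) or \(1\), or none in \((0,1)\). There I would verify each case using the signs \(B_n[f](0)=-f(0)\) and \(B_n[f](1)=nf(1)\), compared with the leading behaviour \(\operatorname{sgn}B_n[f](x)\sim\operatorname{sgn}(a_d)\,x^{d+1}\) as \(x\to\pm\infty\).

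The case \(n=0\) needs separate care at \(x=1\), since there \(w\) does not vanish. In that case \(B_0[f]=(x-1)\bigl(2xf'+f\bigr)\). The factor \(2xf'+f\) is \(T\)-type, and the argument of \cref{lem:An-hyperbolicity} shows it preserves hyperbolicity: it is diagonal with \(\gamma_k=2k+1\) and generating function \((2z+1)e^z\). This gives a clean alternative for \(n=0\).
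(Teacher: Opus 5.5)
Your weighted-derivative route differs from the paper's and does work. However, the zero count you sketch, which is the whole proof, is wrong as written. Two claims in it are false: that Rolle "cannot be applied" on gaps straddling \(0\) or \(1\), and that one further zero "comes from an end interval via the sign at infinity."

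Since \(w=|x|^{1/2}|x-1|^{n/2}\) is continuous on \(\mathbb R\), the function \(h:=wf\) is continuous on \(\mathbb R\) and differentiable off \(\{0,1\}\). For \(n\ge1\) it vanishes on the whole node set \(Z:=\{\text{zeros of }f\}\cup\{0,1\}\). So \(0\) and \(1\) are simply extra nodes, and Rolle only needs differentiability on the open gaps. Each of the \(|Z|-1\) gaps gives a zero of \(h'\), hence of \(B_n[f]=2x(x-1)w^{-1}h'\), off \(Z\). Add multiplicity \(\ge m_r-1\) at roots \(r\notin\{0,1\}\) and \(\ge m_0\), \(\ge m_1\) at \(0\), \(1\). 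This gives
\[
(|Z|-1)+\bigl(d-m_0-m_1-(|Z|-2)\bigr)+m_0+m_1=d+1=\deg B_n[f]
\]
real zeros, all in \([\min Z,\max Z]\). Hence there is never a zero beyond the extreme nodes, and adding an end-interval zero to this tally would exceed the degree. For example, with \(n=1\) and \(f=x-2\), one has \(B_1[f]=4x^2-7x+2\), whose zeros \((7\pm\sqrt{17})/8\) lie in \((0,1)\) and \((1,2)\), with none in \((2,\infty)\). So drop the sign analysis at \(0\), \(1\), \(\pm\infty\) entirely. Your separate treatment of \(n=0\) via \(B_0[f]=(x-1)(2xf'+f)\) and the multiplier sequence \(2k+1\) is fine.

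With that fix, compare with the paper. The paper shows that the symbol \(((n+1)x-1)-2x(x-1)y\) is stable and then invokes the Borcea--Br\"and\'en classification. Its key step is the same partial-fraction identity \(\frac{(n+1)x-1}{2x(x-1)}=\frac1{2x}+\frac{n}{2(x-1)}\) that produces your weight. In the paper, the nonnegative residues show this function maps \(\mathbb H\) into \(-\mathbb H\); in your argument, they make \(w\) vanish at \(0\) and \(1\). The paper's proof is shorter but rests on a deep classification theorem. Yours is elementary and also localizes the zeros, one in each gap of \(Z\). For \(n\ge1\), that essentially gives \cref{lem:Bn-interval-0b,lem:Bn-interlacing} at once.
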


\begin{proof}
We apply the Borcea--Br\"and\'en criterion for linear operators on \(\mathbb R[x]\). The transcendental symbol of \(B_n\) is
\[
G_{B_n}(x,y)
:=
\sum_{k=0}^\infty \frac{(-1)^k}{k!}\,Q_k(x)y^k
=
((n+1)x-1)-2x(x-1)y,
\]
where \(Q_0(x)=((n+1)x-1)\), \(Q_1(x)=2x(x-1)\), and \(Q_k(x)=0\) for \(k\ge2\).

It therefore suffices to show that the bivariate polynomial
\[
((n+1)x-1)-2x(x-1)y
\]
is stable. Suppose \(x,y\in\mathbb H\), where \(\mathbb H:=\{z\in\mathbb C:\operatorname{Im} z>0\}\), and assume
\[
((n+1)x-1)-2x(x-1)y=0.
\]
Then
\[
y=\frac{(n+1)x-1}{2x(x-1)}
=
\frac{1}{2x}+\frac{n}{2(x-1)}.
\]
Since \(x\in\mathbb H\), we have \(1/x\in-\mathbb H\), and likewise \(1/(x-1)\in-\mathbb H\), because \(x-1\in\mathbb H\). Hence \(\frac{1}{2x}\in-\mathbb H\), while \(\frac{n}{2(x-1)}\) has nonpositive imaginary part for \(n\in\mathbb N_0\). Therefore \(y\) has negative imaginary part, so \(y\notin\mathbb H\), a contradiction.

Thus \(G_{B_n}(x,y)\) is stable. By the Borcea--Br\"and\'en theorem, \(B_n\) preserves stability on \(\mathbb{R}[x]\), and since in one variable stability is equivalent to hyperbolicity, \(B_n\) preserves hyperbolicity.
\end{proof}

\begin{proposition}\label{prop:D-hyperbolicity}
The differential operators \(\mathcal D_{\Xi}\) and \(\mathcal D_{\Lambda}\) preserve hyperbolicity.
\end{proposition}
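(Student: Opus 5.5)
The plan is to combine the factorization of \cref{prop:factorization} with the two preceding lemmas. By \cref{prop:factorization}, \(\mathcal D_{\Xi}=A_1\circ B_1\) and \(\mathcal D_{\Lambda}=A_2\circ B_2\). A composition of linear operators that each preserve hyperbolicity again preserves hyperbolicity, so the statement follows once we know that \(B_1,B_2\) and \(A_1,A_2\) each do.

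Concretely, I would take a hyperbolic \(f\in\mathbb R[x]\) and argue in two steps.

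\begin{enumerate}
\item By \cref{lem:Bn-hyperbolicity} with \(n=1\) (respectively \(n=2\)), the polynomial \(g:=B_n[f]\) is hyperbolic. Here one should observe that \(g\neq0\) when \(f\neq0\): the leading coefficient of \(B_n[f]\) is \((2d+n+1)a_d\neq0\), where \(d=\deg f\) and \(a_d\) is the leading coefficient. So the usual convention that \(0\) counts as hyperbolic plays no role.
\item By \cref{lem:An-hyperbolicity}, the polynomial \(A_n[g]\) is hyperbolic. Its leading coefficient is \((2(d+1)+n)\) times that of \(g\), hence again nonzero. This matches the degree count \(\deg\mathcal D[f]=d+1\) from Section 1.
\end{enumerate}

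Thus \(\mathcal D_{\Xi}[f]=A_1[B_1[f]]\) and \(\mathcal D_{\Lambda}[f]=A_2[B_2[f]]\) are hyperbolic.

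The only point needing care is the convention on constants and on the zero polynomial. Under the standard convention, nonzero constants are hyperbolic, and both lemmas were proved via the Pólya--Schur and Borcea--Brändén characterizations, which apply to all of \(\mathbb R[x]\) with that convention. Constants are covered directly: for instance \(B_n[1]=(n+1)x-1\) is linear with a real zero. I therefore expect no real obstacle, since the argument is a two-line composition once the factorization and lemmas are in hand.
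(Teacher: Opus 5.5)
Your proposal is correct and follows the paper's proof: you factor \(\mathcal D_{\Xi}=A_1\circ B_1\) and \(\mathcal D_{\Lambda}=A_2\circ B_2\), invoke \cref{lem:Bn-hyperbolicity} and \cref{lem:An-hyperbolicity}, and compose. Your additional check that the leading coefficient \((2d+n+1)(2d+n+2)a_d\) is nonzero, so the zero polynomial never arises, is a harmless refinement that the paper omits at this point.
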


\begin{proof}
By \cref{prop:factorization}, we have
\[
\mathcal D_{\Xi}=A_1\circ B_1,
\qquad
\mathcal D_{\Lambda}=A_2\circ B_2.
\]
By \cref{lem:An-hyperbolicity}, the operators \(A_1\) and \(A_2\) preserve hyperbolicity, and by \cref{lem:Bn-hyperbolicity}, the operators \(B_1\) and \(B_2\) preserve hyperbolicity as well.

Since the composition of hyperbolicity-preserving operators is again hyperbolicity-preserving, it follows that both
\[
\mathcal D_{\Xi}=A_1\circ B_1
\qquad\text{and}\qquad
\mathcal D_{\Lambda}=A_2\circ B_2
\]
preserve hyperbolicity.
\end{proof}

\section{Preservation of zeros in \texorpdfstring{$(0,b)$}{(0,b)}}

In this section we show that the operators \(\mathcal D_{\Xi}\) and \(\mathcal D_{\Lambda}\) preserve the location of zeros in the interval \((0,b)\), provided that \(b\ge 1\). The argument is based on the factorizations established in \cref{prop:factorization}. We first study the first-order operators
\[
A_n:=2(x-1)D+n,
\qquad
B_n:=2x(x-1)D+((n+1)x-1),
\]
where \(D=\frac{d}{dx}\), and then deduce the result for \(\mathcal D_{\Xi}\) and \(\mathcal D_{\Lambda}\) from the identities
\[
\mathcal D_{\Xi}=A_1\circ B_1,
\qquad
\mathcal D_{\Lambda}=A_2\circ B_2.
\]

\begin{lemma}\label{lem:An-interval-0b}
Let \(n \in \mathbb{N}_0\) and \(b\in\mathbb R\) with \(b\ge 1\). If \(f\in\mathbb R[x]\) is a polynomial all of whose zeros lie in \((0,b)\), then all zeros of \(A_n[f]\) also lie in \((0,b)\).
\end{lemma}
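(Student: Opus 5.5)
The plan is to use the logarithmic derivative of \(f\) together with the hyperbolicity already established in \cref{lem:An-hyperbolicity}. Write \(f(x)=a\prod_{j=1}^{d}(x-x_j)\) with \(a\neq 0\) and all \(x_j\in(0,b)\).

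First I would settle the degree. By the leading-coefficient computation (or directly from \(A_n[x^k]=(2k+n)x^k+\dots\)), the polynomial \(A_n[f]\) has degree exactly \(d\) with leading coefficient \((2d+n)a\). This coefficient is nonzero except in the degenerate case \(d=0\), \(n=0\), where \(A_0[f]\equiv 0\); that case must be excluded or treated as vacuous. Since \(f\) is hyperbolic, \cref{lem:An-hyperbolicity} shows that \(A_n[f]\) has only real zeros. So it suffices to show that \(A_n[f]\) does not vanish at any real \(x\notin(0,b)\).

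For real \(x\) with \(f(x)\neq0\), I would write
\[
A_n[f](x)=f(x)\Bigl(n+2\sum_{j=1}^{d}\frac{x-1}{x-x_j}\Bigr).
\]
\begin{itemize}
\item If \(x\le 0\), then \(x-1<0\) and \(x-x_j<0\) for every \(j\). Each summand is therefore strictly positive, the bracket is positive (for \(d\ge1\)), and \(A_n[f](x)\neq0\). Note that \(f\) has no zeros outside \((0,b)\), so \(f(x)\neq 0\) there.
\item If \(x>b\), then \(x>1\) and \(x>x_j\), so again every summand is strictly positive.
\item At \(x=b\) with \(b>1\), every summand \((b-1)/(b-x_j)\) is strictly positive, so the same conclusion holds.
\end{itemize}
Hence no zero of \(A_n[f]\) lies in \((-\infty,0]\cup[b,\infty)\), and combined with real-rootedness all zeros lie in \((0,b)\).

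The main obstacle is the boundary point \(x=b=1\). There all summands vanish and the bracket equals \(n\). For \(n\ge1\) this is still nonzero and the argument goes through. For \(n=0\), however, \(A_0[f]=2(x-1)f'\) genuinely vanishes at \(x=1\), so the lemma fails as stated when \(n=0\) and \(b=1\). I would therefore add the hypothesis \(n\ge1\) when \(b=1\). This costs nothing for the intended application, which only uses \(A_1\) and \(A_2\).
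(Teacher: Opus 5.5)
Your argument is correct and is essentially the paper's proof: the same sign analysis of $A_n[f](x)=f(x)\bigl(n+2\sum_j\frac{x-1}{x-x_j}\bigr)$ on $x\le 0$ and $x\ge b$. It is in fact tighter than the written proof in four places:
\begin{enumerate}
\item You justify that the zeros are real via \cref{lem:An-hyperbolicity}, whereas the paper tacitly treats $\xi$ as real.
\item You include the point $x=0$.
\item You work with $x\ge b$ rather than the paper's case $\xi>1$, where the inequality $\lambda_j<b\le\xi$ is unjustified when $b>1$.
\item You correctly note that the paper's step $A_n[f](1)=nf(1)\neq 0$ fails for $n=0$. Indeed $f(x)=x-\tfrac12$ gives $A_0[f]=2(x-1)$, a genuine counterexample when $b=1$, so the hypothesis $n\ge 1$ is needed in that case.
\end{enumerate}
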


\begin{proof}
Write \(f(x)=c\prod_{j=1}^m (x-\lambda_j)\), where \(\lambda_j\in(0,b)\). Let \(\xi\) be a zero of \(A_n[f]\). If \(f(\xi)=0\), then automatically \(\xi\in(0,b)\), so there is nothing to prove. We may therefore assume that \(f(\xi)\neq 0\). Since
\[
A_n[f]=2(x-1)D[f]+nf,
\]
the identity \(A_n[f](\xi)=0\) yields
\[
2(\xi-1)\frac{D[f](\xi)}{f(\xi)}+n=0.
\]
Using the logarithmic derivative,
\[
\frac{D[f](\xi)}{f(\xi)}
=
\sum_{j=1}^m \frac{1}{\xi-\lambda_j},
\]
we obtain
\[
2(\xi-1)\sum_{j=1}^m \frac{1}{\xi-\lambda_j}+n=0.
\]

Suppose first that \(\xi<0\). Then \(\xi-1<0\) and \(\xi-\lambda_j<0\) for all \(j\), so each summand \(\displaystyle \frac{\xi-1}{\xi-\lambda_j}\) is positive. Hence
\[
2(\xi-1)\sum_{j=1}^m \frac{1}{\xi-\lambda_j}>0,
\]
and therefore the left-hand side is strictly positive, a contradiction.

Next suppose that \(\xi>1\). Since \(b\ge 1\), we have \(\lambda_j<b\le \xi\), so \(\xi-\lambda_j>0\) for all \(j\). Thus
\[
2(\xi-1)\sum_{j=1}^m \frac{1}{\xi-\lambda_j}>0,
\]
which again contradicts the defining equation.

Finally, \(A_n=n f(1)\neq 0\), because \(1\notin(0,b)\) and all zeros of \(f\) lie in \((0,b)\). Hence \(\xi\neq 1\).

We conclude that every zero \(\xi\) of \(A_n[f]\) lies in \((0,b)\).
\end{proof}

\begin{lemma}\label{lem:Bn-interval-0b}
Let \(b\in\mathbb R\) with \(b\ge 1\), and let \(n\in\mathbb N_0\). If \(f\in\mathbb R[x]\) is a polynomial all of whose zeros lie in \((0,b)\), then all zeros of \(B_n[f]\) also lie in \((0,b)\).
\end{lemma}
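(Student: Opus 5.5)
The plan is to follow the proof of \cref{lem:An-interval-0b} closely. I will combine the logarithmic derivative with a sign analysis outside \((0,b)\), and use \cref{lem:Bn-hyperbolicity} to rule out non-real zeros.

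\textbf{Setup.} Write \(f(x)=c\prod_{j=1}^m(x-\lambda_j)\) with \(c\neq0\) and all \(\lambda_j\in(0,b)\). On monomials, the leading coefficient of \(B_n[f]\) is \((2m+n+1)c\neq0\). Hence \(B_n[f]\) is a nonzero polynomial of degree \(m+1\). Since \(f\) is hyperbolic, \cref{lem:Bn-hyperbolicity} shows that \(B_n[f]\) is hyperbolic, so it suffices to locate its real zeros. Let \(\xi\in\mathbb R\) be a zero of \(B_n[f]\). If \(f(\xi)=0\), then \(\xi\in(0,b)\) and there is nothing to prove. Otherwise, dividing by \(f(\xi)\) gives
\[
2\xi(\xi-1)\sum_{j=1}^m\frac{1}{\xi-\lambda_j}+(n+1)\xi-1=0 .
\]

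\textbf{Sign analysis.} I then treat each region outside \((0,b)\).
\begin{itemize}
\item If \(\xi<0\), then \(\xi(\xi-1)>0\) and every \(\xi-\lambda_j<0\). The first term is therefore negative, and \((n+1)\xi-1<0\) as well. The left side is strictly negative, a contradiction.
\item If \(\xi>1\), which covers every \(\xi\ge b\) when \(b>1\), then \(\xi(\xi-1)>0\) and every \(\xi-\lambda_j>0\), because \(\xi\ge b>\lambda_j\) or \(\xi>1\) with \(\lambda_j<b\). Also \((n+1)\xi-1>0\). The left side is strictly positive, a contradiction. This needs a little care: when \(\xi\in(1,b)\) the zero is already inside the interval. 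So the case to exclude is really \(\xi\ge b\), and there \(\xi-\lambda_j>0\) holds for all \(j\).
\item The endpoint \(0\) is excluded directly, since \(B_n[f](0)=-f(0)\neq0\).
\end{itemize}

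\textbf{Main obstacle.} The delicate step is the endpoint \(\xi=1\) when \(b=1\). Here \(B_n[f](1)=n f(1)\), and \(f(1)\neq0\) because \(1\notin(0,1)\). So \(1\) is not a zero provided \(n\ge1\), which covers the cases \(n=1,2\) needed for \(\mathcal D_\Xi\) and \(\mathcal D_\Lambda\).

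For \(n=0\) and \(b=1\) the statement genuinely fails. Indeed, \(B_0[f]=(x-1)(2xf'+f)\) always vanishes at \(1\). The proof should therefore record the hypothesis \(n\ge1\) when \(b=1\). When \(b>1\), the point \(\xi=1\) lies in \((0,b)\) and causes no problem.

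Collecting the cases, every zero of \(B_n[f]\) lies in \((0,b)\).
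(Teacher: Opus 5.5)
Your proof is correct and follows the paper's argument: the logarithmic-derivative identity, sign contradictions for $\xi<0$ and $\xi\ge b$, and direct evaluation $B_n[f](0)=-f(0)$, $B_n[f](1)=nf(1)$ at the endpoints, plus a useful explicit appeal to \cref{lem:Bn-hyperbolicity} to rule out non-real zeros, which the paper's proof skips. You are also right that the statement fails for $n=0$, $b=1$ (e.g.\ $f=x-\tfrac12$ gives $B_0[f]=(x-1)(3x-\tfrac12)$, vanishing at $1$): the paper's step $B_n[f](1)=nf(1)\neq0$ breaks down there, as its own later remark $B_0[f](1)=0$ confirms, so the extra hypothesis $n\ge1$ when $b=1$ is genuinely needed and costs nothing for $\mathcal D_\Xi$ and $\mathcal D_\Lambda$.
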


\begin{proof}
Write
\[
f(x)=c\prod_{j=1}^m (x-\lambda_j),
\qquad \lambda_j\in(0,b).
\]
Let \(\xi\) be a zero of \(B_n[f]\). If \(f(\xi)=0\), then \(\xi\in(0,b)\), so it remains to consider the case \(f(\xi)\neq 0\). Since
\[
B_n[f]=2x(x-1)D[f]+((n+1)x-1)f,
\]
the equation \(B_n[f](\xi)=0\) becomes
\[
2\xi(\xi-1)\frac{D[f](\xi)}{f(\xi)}+((n+1)\xi-1)=0.
\]
Using the logarithmic derivative,
\[
\frac{D[f](\xi)}{f(\xi)}
=
\sum_{j=1}^m \frac{1}{\xi-\lambda_j},
\]
we obtain
\[
2\xi(\xi-1)\sum_{j=1}^m \frac{1}{\xi-\lambda_j}+((n+1)\xi-1)=0.
\]

Assume first that \(\xi<0\). Then \(\xi<0\) and \(\xi-1<0\), so \(\xi(\xi-1)>0\), while \(\xi-\lambda_j<0\) for every \(j\). Hence
\[
2\xi(\xi-1)\sum_{j=1}^m \frac{1}{\xi-\lambda_j}<0.
\]
Moreover, \(((n+1)\xi-1)<0\). Therefore the left-hand side is strictly negative, a contradiction.

Now assume that \(\xi>b\). Since \(\lambda_j\in(0,b)\), we have \(\xi-\lambda_j>0\) for all \(j\). Also \(\xi>b\ge1\), so \(\xi(\xi-1)>0\), and
\[
((n+1)\xi-1)\ge \xi-1>0.
\]
Hence
\[
2\xi(\xi-1)\sum_{j=1}^m \frac{1}{\xi-\lambda_j}+((n+1)\xi-1)>0,
\]
again a contradiction.

Finally, \(B_n=-f(0)\neq0\). If \(b=1\), then
\[
B_n=n\,f(1)\neq0.
\]
If \(b>1\), then at \(\xi=b\) we have \(\xi-\lambda_j>0\) for all \(j\), \(b(b-1)>0\), and \(((n+1)b-1)>0\), so the same identity shows that \(B_n[f](b)\neq0\).

Therefore every zero \(\xi\) of \(B_n[f]\) lies in \((0,b)\).
\end{proof}

We can now combine the previous two lemmas with the factorization of \cref{prop:factorization}.

\begin{proposition}\label{prop:D-interval-0b}
Let \(b\in\mathbb R\) with \(b\ge 1\), and let \(f\in\mathbb R[x]\) be a polynomial all of whose zeros lie in \((0,b)\). Then all zeros of \(\mathcal D_{\Xi}[f]\) and \(\mathcal D_{\Lambda}[f]\) also lie in \((0,b)\).
\end{proposition}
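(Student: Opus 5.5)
The plan is to deduce the statement directly from the two preceding lemmas via the factorization in \cref{prop:factorization}. Let \(f\in\mathbb R[x]\) have all its zeros in \((0,b)\) with \(b\ge1\).

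\textbf{Inner factor.} Since \(\mathcal D_{\Xi}=A_1\circ B_1\), we first apply \cref{lem:Bn-interval-0b} with \(n=1\). This shows that \(g:=B_1[f]\) has all its zeros in \((0,b)\).

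\textbf{Outer factor.} We then apply \cref{lem:An-interval-0b} with \(n=1\) to \(g\). This shows that \(A_1[g]=\mathcal D_{\Xi}[f]\) has all its zeros in \((0,b)\).

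\textbf{The case of \(\mathcal D_{\Lambda}\).} The same two steps, carried out with \(n=2\) and using \(\mathcal D_{\Lambda}=A_2\circ B_2\), give the claim for \(\mathcal D_{\Lambda}\).

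\textbf{Zeros in \(\mathbb C\).} Both lemmas are phrased for real locations \(\xi\), so one must make sure no nonreal zeros appear. There are two ways to handle this.

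\begin{itemize}
\item Invoke \cref{prop:D-hyperbolicity}, or more precisely \cref{lem:An-hyperbolicity} and \cref{lem:Bn-hyperbolicity} for the intermediate polynomial \(g\). These show that every polynomial produced along the way is real-rooted, so the real-case analysis in the lemmas covers every zero.
\item Alternatively, note that for nonreal \(\xi\) the logarithmic-derivative identity can be ruled out by an imaginary-part argument, as in the proof of \cref{lem:Bn-hyperbolicity}.
\end{itemize}

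I would take the first route, since it uses results already established.

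\textbf{Degenerate cases.} A nonzero constant \(f\) has no zeros, so the hypothesis holds vacuously. In that case \(B_n[f]=((n+1)x-1)f\) has its zero at \(1/(n+1)\in(0,1)\subset(0,b)\), so the chain still goes through. The degree bookkeeping is routine, because each factor raises the degree by at most one and \(\mathcal D\) raises it by exactly one. So \(\mathcal D[f]\) is a nonzero polynomial whose zeros are all real and lie in \((0,b)\).

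\textbf{Main obstacle.} The real difficulty is the boundary cases inside the lemmas. One has to exclude zeros at \(\xi=0\), \(\xi=1\) and \(\xi=b\), which were treated somewhat tersely there. For \(\xi=0\), \(B_n[f](0)=-f(0)\neq0\). For \(\xi=1\), \(A_n[f](1)=nf(1)\). This requires \(n\ge1\) and \(1\notin\) the zero set, and both hold for \(n=1,2\) when \(b=1\). If \(b>1\), the zero \(\xi=1\) lies inside \((0,b)\) and is harmless. I would re-check these points explicitly for \(n\in\{1,2\}\) rather than rely on the general \(n\in\mathbb N_0\) phrasing.
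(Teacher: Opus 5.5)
Your proposal is correct and follows the paper's proof, which simply composes \cref{lem:Bn-interval-0b} and \cref{lem:An-interval-0b} through the factorizations \(\mathcal D_{\Xi}=A_1\circ B_1\), \(\mathcal D_{\Lambda}=A_2\circ B_2\) of \cref{prop:factorization}; your use of \cref{lem:An-hyperbolicity} and \cref{lem:Bn-hyperbolicity} to rule out nonreal zeros is a worthwhile tightening that the paper leaves implicit, since both interval lemmas only treat real \(\xi\). If you do re-check \cref{lem:An-interval-0b} for \(b>1\), the case to exclude is \(\xi\ge b\), not \(\xi>1\) as in the paper's proof (e.g.\ \(A_1[x-2]=3x-4\) vanishes at \(4/3\in(1,3)\)); the lemma's statement, and hence your argument, is unaffected.
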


\begin{proof}
By \cref{prop:factorization}, we have
\[
\mathcal D_{\Xi}=A_1\circ B_1,
\qquad
\mathcal D_{\Lambda}=A_2\circ B_2.
\]
By \cref{lem:An-interval-0b}, both \(A_1\) and \(A_2\) map polynomials whose zeros lie in \((0,b)\) to polynomials whose zeros again lie in \((0,b)\). By \cref{lem:Bn-interval-0b}, the same is true for \(B_1\) and \(B_2\). Therefore the compositions \(A_1\circ B_1\) and \(A_2\circ B_2\) preserve the class of polynomials whose zeros lie in \((0,b)\). This proves the claim.
\end{proof}

\begin{remark}
The assumption \(b\ge 1\) is sharp. For \(0<b<1\), the interval \((0,b)\) is not preserved in general.

For example, let \(f(x)=x-\frac14\). Then \(f\) has its unique zero in \((0,b)\) whenever \(b>\frac14\). On the other hand,
\[
A_1[f](x)=3x-\frac94,
\]
so \(A_1[f]\) has the zero \(x=\frac34\). Thus \(A_1\) fails to preserve \((0,b)\) for every \(b\in(\frac14,\frac34)\).

Similarly,
\[
B_1[f](x)=4x^2-\frac72x+\frac14,
\]
whose zeros are
\[
\frac{7\pm\sqrt{33}}{16}\approx 0.0785,\;0.7965.
\]
Hence \(B_1\) fails to preserve \((0,b)\) for every
\[
b\in\left(\frac14,\frac{7+\sqrt{33}}{16}\right).
\]

It follows that no general preservation result can hold for \(\mathcal D_{\Xi}\) on intervals \((0,b)\) with \(0<b<1\). Since \(A_1\) already fails to preserve \((0,b)\) in this range, the same conclusion applies a fortiori to the factorization-based argument.
\end{remark}

\section{Interlacing}
In this section we study interlacing properties of the operators
\[
A_n:=2(x-1)D+n,
\qquad
B_n:=2x(x-1)D+((n+1)x-1),
\]
where \(D=\frac{d}{dx}\). Our goal is to deduce corresponding results for the second-order operators \(\mathcal D_{\Xi}\) and \(\mathcal D_{\Lambda}\) from the factorizations
\[
\mathcal D_{\Xi}=A_1\circ B_1,
\qquad
\mathcal D_{\Lambda}=A_2\circ B_2.
\]

We begin with the family \(A_n\). As the examples suggest, the natural interlacing statement holds when all zeros of the input polynomial lie in the interval \((0,1)\).

\begin{lemma}\label{lem:An-interlacing}
Let \(n>0\), and let \(f\in\mathbb R[x]\) be a polynomial of degree \(m\) with simple zeros
\[
0<\lambda_1<\lambda_2<\cdots<\lambda_m<1.
\]
Then \(A_n[f]\) has degree \(m\), all of its zeros are real and simple, and if
\[
\mu_1<\mu_2<\cdots<\mu_m
\]
denotes the zeros of \(A_n[f]\), then
\[
\lambda_1<\mu_1<\lambda_2<\mu_2<\cdots<\lambda_m<\mu_m<1.
\]
\end{lemma}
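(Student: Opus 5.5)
The plan is to evaluate \(A_n[f]\) at the zeros of \(f\) and at the point \(1\), and then count sign changes. Write \(g:=A_n[f]=2(x-1)f'+nf\). If \(f\) has leading coefficient \(a_m\), then \(g\) has leading coefficient \((2m+n)a_m\neq0\) because \(n>0\). Hence \(\deg g=m\), so \(g\) has at most \(m\) zeros. It therefore suffices to exhibit \(m\) sign changes of \(g\) located in the intervals \((\lambda_1,\lambda_2),\dots,(\lambda_{m-1},\lambda_m),(\lambda_m,1)\). That gives exactly one simple zero in each of these intervals and no other zeros, which is precisely the claimed strict interlacing \(\lambda_1<\mu_1<\lambda_2<\cdots<\lambda_m<\mu_m<1\).

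First I will look at the zeros of \(f\). We have \(g(\lambda_j)=2(\lambda_j-1)f'(\lambda_j)\). Since \(\lambda_j<1\) and the zeros are simple, \(\operatorname{sgn} g(\lambda_j)=-\operatorname{sgn} f'(\lambda_j)\neq0\). The signs of \(f'(\lambda_j)\) alternate in \(j\), so the values \(g(\lambda_j)\) alternate in sign too. By the intermediate value theorem, \(g\) has a zero in each of the \(m-1\) intervals \((\lambda_j,\lambda_{j+1})\).

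Next I will look at the point \(1\). We have \(g(1)=nf(1)\), and \(f(1)\) has the sign of \(a_m\) because all \(\lambda_j<1\). Likewise \(f'(\lambda_m)\) has the sign of \(a_m\), since \(f\) changes from \(0\) to the sign of \(a_m\) just after its largest zero. So \(g(\lambda_m)=2(\lambda_m-1)f'(\lambda_m)\) has sign \(-\operatorname{sgn}a_m\), while \(g(1)\) has sign \(\operatorname{sgn}a_m\) because \(n>0\). This produces a zero of \(g\) in \((\lambda_m,1)\).

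We now have \(m\) distinct real zeros of a polynomial of degree \(m\). They are therefore all the zeros, each one is simple, and their positions give the interlacing. The only delicate step is the sign bookkeeping at \(\lambda_m\) versus \(1\); it is also where the hypotheses \(n>0\) and \(\lambda_m<1\) are used. Alternatively, one could note that \(g/f=2(x-1)\sum_j(x-\lambda_j)^{-1}+n\) is continuous on each gap between poles and tends to \(\mp\infty\) at the poles, which gives the same conclusion, but the sign count above is cleaner.
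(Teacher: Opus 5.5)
Your proof is correct and follows essentially the same route as the paper: you evaluate \(A_n[f]\) at the zeros \(\lambda_j\), where it equals \(2(\lambda_j-1)f'(\lambda_j)\) and so alternates in sign, and at \(x=1\), where it equals \(nf(1)\); you then apply the intermediate value theorem and close with a degree count. Your version is slightly tighter: you justify \(\deg A_n[f]=m\) through the leading coefficient \((2m+n)a_m\neq 0\), and your degree count alone rules out further zeros, so the paper's appeal to the \((0,b)\)-preservation lemma (and the hypothesis \(\lambda_1>0\)) is not needed.
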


\begin{proof}
Since \(A_n[f]=2(x-1)D[f]+nf\) and \(\deg D[f]=m-1\), we have \(\deg A_n[f]=m\).

For each zero \(\lambda_j\) of \(f\), we have
\[
A_n[f](\lambda_j)=2(\lambda_j-1)D[f](\lambda_j).
\]
Because \(0<\lambda_j<1\), the factor \(\lambda_j-1\) is negative for every \(j\). Since the zeros of \(f\) are simple, the signs of \(D[f](\lambda_j)\) alternate with \(j\). Hence
\[
A_n[f](\lambda_j)A_n[f](\lambda_{j+1})<0
\qquad (j=1,\dots,m-1).
\]
Therefore \(A_n[f]\) has at least one zero in each interval \((\lambda_j,\lambda_{j+1})\) for \(j=1,\dots,m-1\).

Moreover, \(A_n=n f(1)\). Now \(f(1)=c\prod_{j=1}^m(1-\lambda_j)\) has the sign of \(c\), whereas \(A_n[f](\lambda_m)=2(\lambda_m-1)D[f](\lambda_m)\) has the opposite sign, because \(\lambda_m-1<0\) and
\[
D[f](\lambda_m)=c\prod_{j=1}^{m-1}(\lambda_m-\lambda_j)
\]
again has the sign of \(c\). Thus
\[
A_n<0,
\]
so \(A_n[f]\) has at least one zero in \((\lambda_m,1)\).

By \cref{lem:An-interval-0b} with \(b=1\), all zeros of \(A_n[f]\) lie in \((0,1)\). We have therefore found at least one zero of \(A_n[f]\) in each of the \(m\) disjoint intervals
\[
(\lambda_1,\lambda_2),\;(\lambda_2,\lambda_3),\;\dots,\;(\lambda_{m-1},\lambda_m),\;(\lambda_m,1).
\]
Since \(\deg A_n[f]=m\), these are all zeros of \(A_n[f]\). Hence each interval contains exactly one zero, and all zeros are simple. This yields
\[
\lambda_1<\mu_1<\lambda_2<\mu_2<\cdots<\lambda_m<\mu_m<1.
\]
\end{proof}

\begin{lemma}\label{lem:Bn-interlacing}
Let \(n\ge 1\), and let \(f\in\mathbb R[x]\) be a polynomial of degree \(m\) with simple zeros
\[
0<\lambda_1<\lambda_2<\cdots<\lambda_m<1.
\]
Then \(B_n[f]\) has degree \(m+1\), all of its zeros are real and simple, and if
\[
\nu_1<\nu_2<\cdots<\nu_{m+1}
\]
denotes the zeros of \(B_n[f]\), then
\[
0<\nu_1<\lambda_1<\nu_2<\lambda_2<\cdots<\nu_m<\lambda_m<\nu_{m+1}<1.
\]
In particular, the zeros of \(B_n[f]\) strictly interlace those of \(f\).
\end{lemma}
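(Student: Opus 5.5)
The plan mirrors the proof of \cref{lem:An-interlacing}: evaluate \(B_n[f]\) at the zeros of \(f\) and at the endpoints \(0\) and \(1\), read off sign changes, and then count. First, the degree: if \(f\) has leading coefficient \(c\), the coefficient of \(x^{m+1}\) in \(B_n[f]=2x(x-1)D[f]+((n+1)x-1)f\) is \(2mc+(n+1)c=(2m+n+1)c\neq0\). Hence \(\deg B_n[f]=m+1\).

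Next, the sign pattern at the zeros of \(f\). For each \(j\) we have
\[
B_n[f](\lambda_j)=2\lambda_j(\lambda_j-1)\,D[f](\lambda_j).
\]
Since \(0<\lambda_j<1\), the factor \(\lambda_j(\lambda_j-1)\) is strictly negative. Because the zeros are simple, \(\operatorname{sign}D[f](\lambda_j)=\operatorname{sign}(c)\,(-1)^{m-j}\). Therefore \(\operatorname{sign}B_n[f](\lambda_j)=-\operatorname{sign}(c)(-1)^{m-j}\). These values alternate, so there is at least one zero of \(B_n[f]\) in each of the \(m-1\) intervals \((\lambda_j,\lambda_{j+1})\).

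The endpoints supply the remaining two zeros. At \(x=0\) we get \(B_n[f](0)=-f(0)\), with \(f(0)=c(-1)^m\prod\lambda_j\). So \(\operatorname{sign}B_n[f](0)=-\operatorname{sign}(c)(-1)^m\), while \(\operatorname{sign}B_n[f](\lambda_1)=-\operatorname{sign}(c)(-1)^{m-1}\). These are opposite, giving a zero in \((0,\lambda_1)\). At \(x=1\) we get \(B_n[f](1)=n f(1)\). Here \(n\ge1\) is used: the value is nonzero and has the sign of \(c\), because \(f(1)=c\prod(1-\lambda_j)\). On the other hand \(B_n[f](\lambda_m)\) has sign \(-\operatorname{sign}(c)\), giving a zero in \((\lambda_m,1)\).

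We now have at least one zero in each of the \(m+1\) disjoint open intervals \((0,\lambda_1),(\lambda_1,\lambda_2),\dots,(\lambda_m,1)\). Since \(\deg B_n[f]=m+1\), each interval contains exactly one zero, all zeros are real and simple, and the claimed chain
\[
0<\nu_1<\lambda_1<\cdots<\lambda_m<\nu_{m+1}<1
\]
follows. No appeal to \cref{lem:Bn-interval-0b} is needed, because counting already locates every zero. The main point requiring care is the endpoint \(x=1\). This is where the hypothesis \(n\ge1\) enters: for \(n=0\) we would have \(B_0[f](1)=0\), so \(\nu_{m+1}=1\) and the strict inequality would fail. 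Apart from that, the only work is the bookkeeping of the sign parities at \(0\) and at \(\lambda_1\).
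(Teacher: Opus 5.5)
Your proof is correct and follows the paper's argument step for step. It uses the same degree count, the same alternating signs of \(B_n[f](\lambda_j)=2\lambda_j(\lambda_j-1)D[f](\lambda_j)\), and the same endpoint comparisons with \(B_n[f](0)=-f(0)\) and \(B_n[f](1)=nf(1)\). The one difference is that the paper also invokes \cref{lem:Bn-interval-0b} with \(b=1\) before counting. You are right that this step is redundant: \(m+1\) sign changes in disjoint subintervals of \((0,1)\) already account for all \(m+1\) zeros of a polynomial of degree \(m+1\).
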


\begin{proof}
Write \(f(x)=cx^m+\cdots\) with \(c\neq 0\). Since \(D[f](x)=mcx^{m-1}+\cdots\), we have
\[
2x(x-1)D[f](x)=2mc\,x^{m+1}+\cdots,
\qquad
((n+1)x-1)f(x)=(n+1)c\,x^{m+1}+\cdots.
\]
Hence the coefficient of \(x^{m+1}\) in \(B_n[f]\) is \((2m+n+1)c\), so \(\deg B_n[f]=m+1\).

For each zero \(\lambda_j\) of \(f\), we have
\[
B_n[f](\lambda_j)=2\lambda_j(\lambda_j-1)D[f](\lambda_j).
\]
Since \(0<\lambda_j<1\), the factor \(\lambda_j(\lambda_j-1)\) is negative for every \(j\). Because the zeros of \(f\) are simple, the signs of \(D[f](\lambda_j)\) alternate with \(j\). It follows that the signs of \(B_n[f](\lambda_j)\) also alternate, and therefore
\[
B_n[f](\lambda_j)\,B_n[f](\lambda_{j+1})<0
\qquad (j=1,\dots,m-1).
\]
Thus \(B_n[f]\) has at least one zero in each interval \((\lambda_j,\lambda_{j+1})\) for \(j=1,\dots,m-1\).

We next analyze the boundary intervals. Since
\[
B_n=-f(0),
\qquad
B_n=n\,f(1),
\]
it suffices to compare the signs of these quantities with those of \(B_n[f](\lambda_1)\) and \(B_n[f](\lambda_m)\), respectively.

Now \(f(0)=c\prod_{j=1}^m(-\lambda_j)\) has sign \((-1)^m\operatorname{sgn}(c)\), and therefore \(B_n=-f(0)\) has sign \((-1)^{m+1}\operatorname{sgn}(c)\). On the other hand,
\[
D[f](\lambda_1)=c\prod_{j=2}^m(\lambda_1-\lambda_j)
\]
has sign \((-1)^{m-1}\operatorname{sgn}(c)\). Since \(\lambda_1(\lambda_1-1)<0\), it follows that \(B_n[f](\lambda_1)\) has sign \((-1)^m\operatorname{sgn}(c)\). Hence
\[
B_n\,B_n[f](\lambda_1)<0,
\]
so \(B_n[f]\) has at least one zero in \((0,\lambda_1)\).

Similarly, \(f(1)=c\prod_{j=1}^m(1-\lambda_j)\) has the sign of \(c\), hence \(B_n=n\,f(1)\) also has the sign of \(c\), since \(n\ge1\). Moreover,
\[
D[f](\lambda_m)=c\prod_{j=1}^{m-1}(\lambda_m-\lambda_j)
\]
also has the sign of \(c\), whereas \(\lambda_m(\lambda_m-1)<0\). Therefore \(B_n[f](\lambda_m)\) has the opposite sign, and thus
\[
B_n[f](\lambda_m)\,B_n<0.
\]
So \(B_n[f]\) has at least one zero in \((\lambda_m,1)\).

By \cref{lem:Bn-interval-0b} with \(b=1\), all zeros of \(B_n[f]\) lie in \((0,1)\). We have therefore found at least one zero in each of the \(m+1\) disjoint intervals
\[
(0,\lambda_1),\;(\lambda_1,\lambda_2),\;\dots,\;(\lambda_{m-1},\lambda_m),\;(\lambda_m,1).
\]
Since \(\deg B_n[f]=m+1\), these are all zeros of \(B_n[f]\). Hence each interval contains exactly one zero, and all zeros are simple. This proves that
\[
0<\nu_1<\lambda_1<\nu_2<\lambda_2<\cdots<\nu_m<\lambda_m<\nu_{m+1}<1.
\]
\end{proof}

\begin{remark}
The assumption \(n\ge 1\) in \cref{lem:Bn-interlacing} is necessary. Indeed, for \(n=0\) one has
\[
B_0[f](x)=2x(x-1)f'(x)+(x-1)f(x)=(x-1)\bigl(2x f'(x)+f(x)\bigr),
\]
so \(B_0=0\) for every polynomial \(f\). Thus \(B_0[f]\) always has a zero at \(x=1\), and the strict interlacing conclusion in \((0,1)\) cannot hold in this case. One may at best expect an interlacing statement in which the rightmost zero of \(B_0[f]\) is equal to \(1\).
\end{remark}

\begin{remark}
One might also ask whether, for a polynomial \(f\) with simple zeros in \((0,1)\), the zeros of \(f\) necessarily interlace those of \(\mathcal D_{\Xi}[f]\) or \(\mathcal D_{\Lambda}[f]\). This is not true in general.

For instance, let
\[
f(x)=\Bigl(x-\frac{13}{20}\Bigr)\Bigl(x-\frac{67}{100}\Bigr).
\]
Then the zeros of \(f\) are
\[
0.65 \qquad \text{and} \qquad 0.67.
\]
On the other hand, the zeros of \(\mathcal D_{\Xi}[f]\) are approximately
\[
0.3105,\qquad 0.7888,\qquad 0.9812,
\]
while the zeros of \(\mathcal D_{\Lambda}[f]\) are approximately
\[
0.2819,\qquad 0.7580,\qquad 0.9526.
\]
In both cases, the two zeros of \(f\) lie between the same two consecutive zeros of the transformed polynomial. Hence \(f\) does not interlace \(\mathcal D_{\Xi}[f]\), and \(f\) does not interlace \(\mathcal D_{\Lambda}[f]\).

Thus, although \(B[f]\) interlaces \(f\), and \(\mathcal D_{\Xi}[f]=A_1(B[f])\) as well as \(\mathcal D_{\Lambda}[f]=A_2(B[f])\) interlace \(B[f]\), one cannot in general expect a direct interlacing relation between \(f\) and \(\mathcal D_{\Xi}[f]\) or \(\mathcal D_{\Lambda}[f]\).
\end{remark}

\begin{remark}
The interlacing patterns for \(A_n\) and \(B_n\) are different. If
\[
0<\lambda_1<\cdots<\lambda_m<1
\]
are the zeros of \(f\), then the zeros of \(A_n[f]\) are of the form
\[
\lambda_1<\mu_1<\lambda_2<\mu_2<\cdots<\lambda_m<\mu_m<1,
\]
whereas for \(n\ge 1\), the zeros of \(B_n[f]\) are of the form
\[
0<\nu_1<\lambda_1<\nu_2<\lambda_2<\cdots<\nu_m<\lambda_m<\nu_{m+1}<1.
\]
This reflects the fact that \(A_n\) preserves degree, while \(B_n\) increases degree by one.
\end{remark}

\begin{definition}
Following Borcea--Br\"and\'en, for real polynomials \(f,g\in\mathbb R[x]\) we write
\(
f\ll g
\)
if the polynomial
\(
g+i f
\)
is stable.
\end{definition}

\begin{remark}
In the univariate case, the relation \(f\ll g\) is equivalent to saying that \(f\) and \(g\) have interlacing zeros and that their Wronskian satisfies
\[
W[f,g]:=f'g-fg'\le 0
\]
on \(\mathbb R\).
\end{remark}

\begin{lemma}\label{lem:proper-position-preservation}
The operators \(\mathcal D_{\Xi}\) and \(\mathcal D_{\Lambda}\) preserve proper position. More precisely, for all \(f,g\in\mathbb R[x]\),
\[
f\ll g \implies \mathcal D_{\Xi}[f]\ll \mathcal D_{\Xi}[g],
\qquad
f\ll g \implies \mathcal D_{\Lambda}[f]\ll \mathcal D_{\Lambda}[g].
\]
\end{lemma}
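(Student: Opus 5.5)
The plan is to reduce the claim to the Hermite--Kakeya--Obreschkoff / Borcea--Br\"and\'en correspondence between real-rootedness preservers and proper-position preservers. For a \emph{real} linear operator \(T:\mathbb R[x]\to\mathbb R[x]\), one can extend \(T\) complex-linearly to \(\mathbb C[x]\). Then \(f\ll g\) means exactly that \(g+if\) is stable, and
\[
T[g+if]=T[g]+i\,T[f].
\]
So the statement \(T[f]\ll T[g]\) is precisely the assertion that \(T\) maps the stable polynomial \(g+if\) to a stable polynomial (or to zero). It therefore suffices to show that \(\mathcal D_{\Xi}\) and \(\mathcal D_{\Lambda}\), extended to \(\mathbb C[x]\), preserve stability.

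By \cref{prop:factorization} we have \(\mathcal D_{\Xi}=A_1\circ B_1\) and \(\mathcal D_{\Lambda}=A_2\circ B_2\). Compositions of stability preservers are stability preservers, so we treat each factor separately.

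For \(B_n\), the proof of \cref{lem:Bn-hyperbolicity} already shows that the symbol
\[
G_{B_n}(x,y)=((n+1)x-1)-2x(x-1)y
\]
is stable in \(\mathbb H^2\). The Borcea--Br\"and\'en characterization of \emph{complex} stability preservers for finite-order operators (with symbol \(\sum_k Q_k(x)(-y)^k\) stable) then gives directly that \(B_n\) preserves stability on \(\mathbb C[x]\). This is stronger than real-rootedness preservation, and it is exactly what we need. I would restate the argument in that form rather than rely on the wording ``preserves hyperbolicity''.

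For \(A_n=2(x-1)D+n\), I would apply the same symbol test rather than the multiplier-sequence argument of \cref{lem:An-hyperbolicity}, since multiplier sequences a priori only give real-rootedness. The symbol is
\[
G_{A_n}(x,y)=n-2(x-1)y.
\]
If \(x,y\in\mathbb H\) and \(G_{A_n}=0\), then \(y=n/(2(x-1))\). Since \(x-1\in\mathbb H\), this gives \(y\in-\mathbb H\cup\{0\}\) for \(n\ge0\), a contradiction. Hence the symbol is stable and \(A_n\) is a stability preserver on \(\mathbb C[x]\).

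Composing, \(\mathcal D_{\Xi}\) and \(\mathcal D_{\Lambda}\) preserve stability. Applied to \(g+if\), this yields the stated implications.

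The main technical point is to make sure the cited Borcea--Br\"and\'en theorem is invoked in its complex-stability version, including its degenerate alternatives. Those alternatives cover operators of rank at most two, or operators whose image is zero; they are excluded here because \(\mathcal D\) raises degree by one. One must also handle the degenerate outcome \(T[g]+iT[f]=0\), or cases where \(f,g\) are constant. I would dispose of these via the convention that \(0\) counts as stable, together with the injectivity of \(\mathcal D_{\Xi}\) and \(\mathcal D_{\Lambda}\), which follows from the exact degree increase recorded in Section 1. Alternatively, one can bypass the stability theorem and use the Hermite--Kakeya--Obreschkoff theorem. That route requires showing that \(T[g]+\alpha T[f]\) is real-rooted for all real \(\alpha\), which follows from hyperbolicity preservation (\cref{prop:D-hyperbolicity}) applied to \(g+\alpha f\). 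The orientation of \(\ll\) would then be fixed by checking the sign of the Wronskian on a single example, e.g.\ \(f=1\), \(g=x\).
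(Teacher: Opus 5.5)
Your argument is correct, but it takes a different route from the paper.

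\textbf{The paper's route.} The paper starts from \cref{prop:D-hyperbolicity}, i.e.\ hyperbolicity preservation, with \(A_n\) handled through the multiplier sequence \((2k+n)_k\) and \(B_n\) through its symbol. It then regards \(\mathcal D_{\Xi}\) and \(\mathcal D_{\Lambda}\) as real stability preservers and quotes a Borcea--Br\"and\'en theorem asserting that \(f\ll g\) forces \(T[f]\ll T[g]\) or \(T[f]=T[g]\equiv 0\). The degenerate alternative is excluded by the exact degree increase.

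\textbf{Your route.} You prove the stronger fact that the complexified factors \(A_n\) and \(B_n\) preserve stability of complex polynomials. Both follow from the symbol test, since \(n-2(x-1)y\) and \(((n+1)x-1)-2x(x-1)y\) are stable. You then compose and read the conclusion off \(T[g+if]=T[g]+iT[f]\), with injectivity ruling out the zero case.

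\textbf{What each buys.} Your route gives the orientation of \(\ll\) for free. Hyperbolicity preservation alone cannot distinguish preserving proper position from reversing it: the map \(f(x)\mapsto f(-x)\) preserves real-rootedness but turns \(1\ll x\) into \(-x\ll 1\). So in the paper this information is carried entirely by the cited theorem, whereas in your argument it is built in. The paper's route is shorter because it recycles \cref{prop:D-hyperbolicity}.

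You could even avoid the symbol theorem. Argue by hand as in \cref{lem:An-interval-0b,lem:Bn-interval-0b}: at a zero \(\xi\in\mathbb H\) of \(A_n[f]\) or \(B_n[f]\) with \(f(\xi)\neq 0\), the sum \(\sum_j(\xi-\lambda_j)^{-1}\) lies in \(-\mathbb H\). It would have to equal \(-\frac{n}{2(\xi-1)}\), respectively \(-\frac{1}{2\xi}-\frac{n}{2(\xi-1)}\), and each of these is \(0\) or lies in \(\mathbb H\).

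\textbf{Your fallback.} The Hermite--Kakeya--Obreschkoff route is weaker as written. It gives interlacing of \(T[f]\) and \(T[g]\) pair by pair. But checking the Wronskian on the single example \(f=1\), \(g=x\) fixes the orientation for all pairs only once you know the orientation is uniform. That uniformity is exactly the nontrivial content of the Borcea--Br\"and\'en dichotomy. Your main argument does not need the fallback, so this is only a side issue.
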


\begin{proof}
By \cref{prop:D-hyperbolicity}, the operators \(\mathcal D_{\Xi}\) and
\(\mathcal D_{\Lambda}\) preserve hyperbolicity. In one variable, this is
equivalent to preserving real stability, so both operators are real stability
preservers in the sense of Borcea--Br\"and\'en.

Hence, by \cite[Theorem~4.3]{BB2009}, from \(f\ll g\) it follows that either
\(\mathcal D_{\Xi}[f]\ll \mathcal D_{\Xi}[g]\) or
\(\mathcal D_{\Xi}[f]=\mathcal D_{\Xi}[g]\equiv 0\), and likewise either
\(\mathcal D_{\Lambda}[f]\ll \mathcal D_{\Lambda}[g]\) or
\(\mathcal D_{\Lambda}[f]=\mathcal D_{\Lambda}[g]\equiv 0\).

It therefore remains to rule out the zero alternative. Let
\(p(x)=a_nx^n+\cdots\) with \(a_n\neq 0\). A direct computation shows that the
leading term of \(\mathcal D_{\Xi}[p]\) is
\(2(n+1)(2n+3)a_n x^{n+1}\), whereas the leading term of
\(\mathcal D_{\Lambda}[p]\) is \(2(2n+3)(n+2)a_n x^{n+1}\). Since these
coefficients are nonzero for every \(n\ge 0\), both operators raise the degree
of every nonzero polynomial by exactly one. In particular,
\(\mathcal D_{\Xi}[p]\neq 0\) and \(\mathcal D_{\Lambda}[p]\neq 0\) whenever
\(p\neq 0\).

Thus neither \(\mathcal D_{\Xi}\) nor \(\mathcal D_{\Lambda}\) can annihilate a
nonzero polynomial, so the alternative
\(\mathcal D_{\Xi}[f]=\mathcal D_{\Xi}[g]\equiv 0\), and similarly
\(\mathcal D_{\Lambda}[f]=\mathcal D_{\Lambda}[g]\equiv 0\), is impossible.
Therefore \(f\ll g\) implies \(\mathcal D_{\Xi}[f]\ll \mathcal D_{\Xi}[g]\),
and likewise \(f\ll g\) implies
\(\mathcal D_{\Lambda}[f]\ll \mathcal D_{\Lambda}[g]\).
\end{proof}

\part{Polynomial sequences associated with \texorpdfstring{$\mathcal D_{\Xi}$ and $\mathcal D_{\Lambda}$}{DXi and DLambda}}

In this part we introduce two auxiliary polynomial families naturally associated with
\(\mathcal D_{\Xi}\) and \(\mathcal D_{\Lambda}\), together with the polynomial
sequences generated from a general linear initial datum \(cx-d\). The auxiliary
families provide a convenient normalized model, while the general iterated sequences
are those that will be studied from the point of view of interlacing.

We define the normalization constants
\[
a_n:=-\frac{1}{8n(2n+1)},
\qquad
b_n:=-\frac{2^{2n+1}-1}{(2^{2n+3}-1)(2n+1)(2n+2)}
\qquad (n\ge1),
\]
and the auxiliary polynomial families \((\widetilde{\Xi}_n)_{n\ge1}\) and
\((\widetilde{\Lambda}_n)_{n\ge1}\) by
\[
\widetilde{\Xi}_1(x):=\frac14,
\qquad
\widetilde{\Xi}_{n+1}(x):=a_n\,\mathcal D_{\Xi}[\widetilde{\Xi}_n](x)
\qquad (n\ge1),
\]
and
\[
\widetilde{\Lambda}_1(x):=\frac17,
\qquad
\widetilde{\Lambda}_{n+1}(x):=b_n\,\mathcal D_{\Lambda}[\widetilde{\Lambda}_n](x)
\qquad (n\ge1).
\]

More generally, given \(c,d\in\mathbb R\), and two real sequences
\((A_n)_{n\ge1}\) and \((B_n)_{n\ge1}\), we consider the polynomial sequences
\((P_n^\Xi)_{n\ge1}\) and \((P_n^\Lambda)_{n\ge1}\) defined by
\[
P_1^\Xi(x):=cx-d,
\qquad
P_{n+1}^\Xi(x):=A_n\,\mathcal D_{\Xi}[P_n^\Xi](x)
\qquad (n\ge1),
\]
and
\[
P_1^\Lambda(x):=cx-d,
\qquad
P_{n+1}^\Lambda(x):=B_n\,\mathcal D_{\Lambda}[P_n^\Lambda](x)
\qquad (n\ge1).
\]

Since \(P_1^\Xi=P_1^\Lambda=cx-d\), we simply write \(P_1\). The next statement expresses the general iterated sequences in terms of the
auxiliary families.

\section{Closed formulae for the polynomials}

\begin{proposition}\label{prop:closed-form-iterates}
For every \(n\ge2\), one has
\[
P_n^\Xi(x)
=
-4\left(\displaystyle\prod_{k=1}^{n-1}\frac{A_k}{a_k}\right)
\left(
\frac{4n(2n+1)}{3}\,c\,\widetilde{\Xi}_{n+1}(x)
+
\left(d-\frac56c\right)\widetilde{\Xi}_n(x)
\right),
\]
and
\[
P_n^\Lambda(x)
=
-7\left(\displaystyle\prod_{k=1}^{n-1}\frac{B_k}{b_k}\right)
\left(
\frac{(2^{2n+3}-1)(2n+1)(2n+2)}{12(2^{2n+1}-1)}\,c\,\widetilde{\Lambda}_{n+1}(x)
+
\left(d-\frac23c\right)\widetilde{\Lambda}_n(x)
\right).
\]
\end{proposition}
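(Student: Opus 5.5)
The plan is to use linearity. I write the initial datum $P_1=cx-d$ as a linear combination of $\widetilde{\Xi}_1,\widetilde{\Xi}_2$ (respectively $\widetilde{\Lambda}_1,\widetilde{\Lambda}_2$), and then push both pieces through $n-1$ applications of the operator. No property of $\mathcal D_{\Xi}$ or $\mathcal D_{\Lambda}$ beyond linearity is needed; the content is purely the bookkeeping of the normalizing constants.

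\emph{Step 1 (the first two auxiliary polynomials).} From the monomial formula, $\mathcal D_{\Xi}[1]=6x-5$. Together with $a_1=-\tfrac1{24}$ this gives
\[
\widetilde{\Xi}_2=-\tfrac{1}{96}(6x-5).
\]
Similarly $\mathcal D_{\Lambda}[1]=4(3x-2)$, and $b_1=-\tfrac{7}{372}$ (since $2^3-1=7$ and $2^5-1=31$), so
\[
\widetilde{\Lambda}_2=-\tfrac{1}{93}(3x-2).
\]
Solving each relation for $x$ and using $1=4\widetilde{\Xi}_1=7\widetilde{\Lambda}_1$, I obtain
\[
cx-d=-4\Bigl(4c\,\widetilde{\Xi}_2+\bigl(d-\tfrac56c\bigr)\widetilde{\Xi}_1\Bigr),
\qquad
cx-d=-7\Bigl(\tfrac{31}{7}c\,\widetilde{\Lambda}_2+\bigl(d-\tfrac23c\bigr)\widetilde{\Lambda}_1\Bigr).
\]
These are exactly the case $n=1$ of the claimed formulas, since $\tfrac{4\cdot1\cdot3}{3}=4$ and $\tfrac{127\cdot3\cdot4}{12\cdot 7}=\tfrac{127}{7}$. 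So the $\Lambda$ case at $n=1$ needs a small check: the statement's $\Lambda$-coefficient at $n=1$ should be read after the shift in Step 3, and it is that shifted version which must agree.

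\emph{Step 2 (iteration).} By induction, $P_n^\Xi=\bigl(\prod_{k=1}^{n-1}A_k\bigr)\mathcal D_{\Xi}^{\,n-1}[P_1]$. From the defining recursion one also gets, for $j\ge1$,
\[
\mathcal D_{\Xi}^{\,n-1}[\widetilde{\Xi}_j]=\widetilde{\Xi}_{j+n-1}\Big/\prod_{k=j}^{j+n-2}a_k .
\]
Applying this with $j=1$ and $j=2$ to the decomposition of Step 1 gives
\[
P_n^\Xi=-4\Bigl(\prod_{k=1}^{n-1}\tfrac{A_k}{a_k}\Bigr)\Bigl(4c\,\tfrac{a_1}{a_n}\,\widetilde{\Xi}_{n+1}+\bigl(d-\tfrac56c\bigr)\widetilde{\Xi}_n\Bigr),
\]
where the extra factor $a_1/a_n$ comes from comparing $\prod_{k=2}^{n}a_k$ with $\prod_{k=1}^{n-1}a_k$. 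The same computation with $b_k$ handles $P_n^\Lambda$.

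\emph{Step 3 (constants).} It remains to evaluate the ratios. For $\Xi$,
\[
\frac{a_1}{a_n}=\frac{8n(2n+1)}{24}=\frac{n(2n+1)}{3},
\]
so $4c\,a_1/a_n=\tfrac{4n(2n+1)}{3}c$, as claimed. For $\Lambda$,
\[
\frac{31}{7}\cdot\frac{b_1}{b_n}=\frac{31}{7}\cdot\frac{7}{372}\cdot\frac{(2^{2n+3}-1)(2n+1)(2n+2)}{2^{2n+1}-1},
\]
and since $31/372=1/12$ this yields the stated coefficient. At $n=1$ this reproduces $\tfrac{31}{7}$, which is consistent with Step 1.

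The main (minor) obstacle is this constant bookkeeping, namely getting the index shift between $\prod_{k=2}^n$ and $\prod_{k=1}^{n-1}$ right. Everything else is linearity. Although the proposition is stated for $n\ge2$, the argument works uniformly for all $n\ge1$.
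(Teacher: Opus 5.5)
Your proof is correct and follows the paper's argument essentially verbatim. You write $cx-d$ as a combination of $\widetilde{\Xi}_1,\widetilde{\Xi}_2$ (resp.\ $\widetilde{\Lambda}_1,\widetilde{\Lambda}_2$), push it through $\mathcal D_{\Xi}^{\,n-1}$ (resp.\ $\mathcal D_{\Lambda}^{\,n-1}$) by linearity, and absorb the index shift into $a_1/a_n$ (resp.\ $b_1/b_n$).

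The only blemish is the side remark in Step~1. At $n=1$ one has $2^{2n+3}-1=31$, not $127$, so the stated $\Lambda$-coefficient is directly $31/7$ and no ``shifted reading'' is needed, as your own Step~3 confirms.
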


\begin{proof}
A direct computation gives
\[
\mathcal D_{\Xi}\!\left[\frac14\right](x)=\frac{6x-5}{4},
\qquad
\mathcal D_{\Lambda}\!\left[\frac17\right](x)=\frac{4(3x-2)}{7},
\]
hence
\[
\widetilde{\Xi}_2(x)
=
a_1\,\mathcal D_{\Xi}\!\left[\widetilde{\Xi}_1\right](x)
=
-\frac{1}{24}\cdot\frac{6x-5}{4}
=
\frac{5-6x}{96},
\]
and
\[
\widetilde{\Lambda}_2(x)
=
b_1\,\mathcal D_{\Lambda}\!\left[\widetilde{\Lambda}_1\right](x)
=
-\frac{7}{372}\cdot\frac{4(3x-2)}{7}
=
\frac{2-3x}{93}.
\]
Therefore,
\[
-16\,\widetilde{\Xi}_2(x)=x-\frac56,
\qquad
-31\,\widetilde{\Lambda}_2(x)=x-\frac23.
\]
It follows that
\[
cx-d
=
c\left(x-\frac56\right)-\left(d-\frac56c\right),
\qquad
cx-d
=
c\left(x-\frac23\right)-\left(d-\frac23c\right).
\]

Iterating the defining recurrences for \(P_n^\Xi\) and \(P_n^\Lambda\), we obtain
\[
P_n^\Xi
=
\left(\displaystyle\prod_{k=1}^{n-1}A_k\right)\mathcal D_\Xi^{\,n-1}[cx-d],
\qquad
P_n^\Lambda
=
\left(\displaystyle\prod_{k=1}^{n-1}B_k\right)\mathcal D_\Lambda^{\,n-1}[cx-d].
\]
Using the above decompositions together with linearity, this yields
\[
P_n^\Xi
=
\left(\displaystyle\prod_{k=1}^{n-1}A_k\right)
\left(
c\,\mathcal D_\Xi^{\,n-1}\!\left[x-\frac56\right]
-
\left(d-\frac56c\right)\mathcal D_\Xi^{\,n-1}[1]
\right),
\]
and
\[
P_n^\Lambda
=
\left(\displaystyle\prod_{k=1}^{n-1}B_k\right)
\left(
c\,\mathcal D_\Lambda^{\,n-1}\!\left[x-\frac23\right]
-
\left(d-\frac23c\right)\mathcal D_\Lambda^{\,n-1}[1]
\right).
\]

Now, from
\[
\widetilde{\Xi}_{m+1}=a_m\,\mathcal D_\Xi[\widetilde{\Xi}_m]
\qquad (m\ge1),
\]
we get
\[
\mathcal D_\Xi[\widetilde{\Xi}_m]=\frac{1}{a_m}\widetilde{\Xi}_{m+1}.
\]
Since \(\widetilde{\Xi}_2=-\frac1{16}\left(x-\frac56\right)\), repeated application gives
\[
\mathcal D_\Xi^{\,n-1}\!\left[x-\frac56\right]
=
-\frac{16}{\displaystyle\prod_{k=2}^{n}a_k}\,\widetilde{\Xi}_{n+1}.
\]
Likewise, from \(\widetilde{\Xi}_1=\frac14\), we obtain
\[
\mathcal D_\Xi^{\,n-1}[1]
=
\frac{4}{\displaystyle\prod_{k=1}^{n-1}a_k}\,\widetilde{\Xi}_n.
\]

Similarly, from
\[
\widetilde{\Lambda}_{m+1}=b_m\,\mathcal D_\Lambda[\widetilde{\Lambda}_m]
\qquad (m\ge1),
\]
we obtain
\[
\mathcal D_\Lambda[\widetilde{\Lambda}_m]=\frac{1}{b_m}\widetilde{\Lambda}_{m+1}.
\]
Since \(\widetilde{\Lambda}_2=-\frac1{31}\left(x-\frac23\right)\), repeated application yields
\[
\mathcal D_\Lambda^{\,n-1}\!\left[x-\frac23\right]
=
-\frac{31}{\displaystyle\prod_{k=2}^{n}b_k}\,\widetilde{\Lambda}_{n+1},
\]
and, because \(\widetilde{\Lambda}_1=\frac17\),
\[
\mathcal D_\Lambda^{\,n-1}[1]
=
\frac{7}{\displaystyle\prod_{k=1}^{n-1}b_k}\,\widetilde{\Lambda}_n.
\]

Substituting these identities into the previous expressions, we obtain
\[
P_n^\Xi(x)
=
-16c\,
\frac{\displaystyle\prod_{k=1}^{n-1}A_k}{\displaystyle\prod_{k=2}^{n}a_k}\,
\widetilde{\Xi}_{n+1}(x)
-
4\left(d-\frac56c\right)
\frac{\displaystyle\prod_{k=1}^{n-1}A_k}{\displaystyle\prod_{k=1}^{n-1}a_k}\,
\widetilde{\Xi}_n(x),
\]
and
\[
P_n^\Lambda(x)
=
-31c\,
\frac{\displaystyle\prod_{k=1}^{n-1}B_k}{\displaystyle\prod_{k=2}^{n}b_k}\,
\widetilde{\Lambda}_{n+1}(x)
-
7\left(d-\frac23c\right)
\frac{\displaystyle\prod_{k=1}^{n-1}B_k}{\displaystyle\prod_{k=1}^{n-1}b_k}\,
\widetilde{\Lambda}_n(x).
\]

Since
\[
\frac{\displaystyle\prod_{k=1}^{n-1}A_k}{\displaystyle\prod_{k=2}^{n}a_k}
=
\left(\displaystyle\prod_{k=1}^{n-1}\frac{A_k}{a_k}\right)\frac{a_1}{a_n},
\qquad
\frac{\displaystyle\prod_{k=1}^{n-1}B_k}{\displaystyle\prod_{k=2}^{n}b_k}
=
\left(\displaystyle\prod_{k=1}^{n-1}\frac{B_k}{b_k}\right)\frac{b_1}{b_n},
\]
and
\[
16\frac{a_1}{a_n}=\frac{4n(2n+1)}{3},
\qquad
\frac{31}{7}\frac{b_1}{b_n}
=
\frac{(2^{2n+3}-1)(2n+1)(2n+2)}{12(2^{2n+1}-1)},
\]
the stated formulas follow.
\end{proof}

\section{Interlacing of zeros}

\begin{proposition}
Let \(c,d \in \mathbb{R}\) with \(c \neq 0\). Then the following hold:
\begin{enumerate}
    \item \(P_1\) and \(P_2^{\Xi}\) have strictly interlacing zeros if and only if
    \(
    \dfrac{3}{7}<\dfrac{d}{c}<1.
    \)
    \item \(P_1\) and \(P_2^{\Lambda}\) have strictly interlacing zeros if and only if
    \(
    \dfrac{1}{3}<\dfrac{d}{c}<1.
    \)
\end{enumerate}
\end{proposition}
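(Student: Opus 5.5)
The plan is to reduce everything to a sign test of a quadratic at one point. Throughout I assume the normalizing constant \(A_1\) (resp. \(B_1\)) is nonzero; if it vanished, \(P_2\) would be identically zero and there would be nothing to interlace. Since multiplying by a nonzero constant changes no zeros, it suffices to work with \(Q_\Xi:=\mathcal D_{\Xi}[cx-d]\) and \(Q_\Lambda:=\mathcal D_{\Lambda}[cx-d]\).

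Since \(c\neq0\), \(P_1\) has the single zero \(r:=d/c\). A linear polynomial and a quadratic \(Q\) have strictly interlacing zeros exactly when \(Q\) has two distinct real zeros \(\nu_1<\nu_2\) with \(\nu_1<r<\nu_2\).

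The key observation is elementary. For a real quadratic with leading coefficient \(L\), the condition \(L\cdot Q(r)<0\) is equivalent to \(Q\) having two distinct real zeros that strictly separate \(r\).
\begin{itemize}
\item If \(L\,Q(r)<0\), then \(Q\) takes the sign opposite to \(L\) at \(r\) and the sign of \(L\) at \(\pm\infty\). The intermediate value theorem then gives one zero on each side of \(r\).
\item Conversely, if \(\nu_1<r<\nu_2\), write \(Q=L(x-\nu_1)(x-\nu_2)\). Then \(Q(r)\) has sign opposite to \(L\).
\end{itemize}

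Now compute the two ingredients. Since \((cx-d)''=0\) and \((cx-d)'=c\), the definition of \(\mathcal D_{\Xi}\) gives
\[
Q_\Xi(x)=2c(x-1)(7x-3)+(6x-5)(cx-d).
\]
Its leading coefficient is \(14c+6c=20c\), consistent with the formula \(\operatorname{LC}=(4d^2+10d+6)a_d\) for \(d=1\). Evaluating at \(r\) kills the second term, so
\[
Q_\Xi(r)=2c(r-1)(7r-3).
\]
Hence \(20c\cdot Q_\Xi(r)=40c^2(r-1)(7r-3)\), which is negative if and only if \(3/7<r<1\). This proves part 1.

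Similarly,
\[
Q_\Lambda(x)=6c(x-1)(3x-1)+4(3x-2)(cx-d),
\]
with leading coefficient \(18c+12c=30c\) and \(Q_\Lambda(r)=6c(r-1)(3r-1)\). The product \(180c^2(r-1)(3r-1)\) is negative exactly when \(1/3<r<1\). This proves part 2.

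The only delicate point is making sure the equivalence covers degenerate cases. When \(r\) is a zero of \(Q\) (namely \(r=1\), \(r=3/7\), or \(r=1/3\)), interlacing is not strict, and the sign test correctly gives \(L\,Q(r)=0\), which fails the strict inequality. When \(Q\) has a double or non-real zero, \(L\,Q(r)\ge0\) automatically, so those cases are excluded as well. With this checked, both equivalences are complete and no appeal to the earlier interlacing lemmas is needed.
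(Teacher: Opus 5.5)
Your proof is correct and follows essentially the same route as the paper: both use the sign test $L\cdot Q(r)<0$ for the quadratic at the zero $r=d/c$ of $P_1$, with $Q_\Xi(r)=2c(r-1)(7r-3)$ and $Q_\Lambda(r)=6c(r-1)(3r-1)$. Your version is slightly tidier, since you obtain $Q(r)$ by noting that the zeroth-order term vanishes at $r$, you skip the unnecessary appeal to hyperbolicity, and you state the implicit assumption $A_1,B_1\neq 0$ explicitly.
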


\begin{proof}
Since \(c \neq 0\), the polynomial \(P_1(x)=cx-d\) has the unique real zero
\(
r=\dfrac{d}{c}.
\)
Bearing in mind that both \(D_{\Xi}\) and \(D_{\Lambda}\) raise the degree by exactly one, \(P_2^{\Xi}\) and \(P_2^{\Lambda}\) are quadratic polynomials. By \cref{prop:D-hyperbolicity}, these quadratic polynomials have exactly two roots.

We use the following elementary criterion: if \(q\) is a quadratic polynomial with leading coefficient \(a\), then \(q\) has two distinct real zeros and \(r\) lies strictly between them if and only if
\[
a\,q(r)<0.
\]
Indeed, writing \(q(x)=a(x-\alpha)(x-\beta)\) with \(\alpha<\beta\), we have
\[
r\in(\alpha,\beta)
\quad\Longleftrightarrow\quad
(r-\alpha)(r-\beta)<0
\quad\Longleftrightarrow\quad
\frac{q(r)}{a}<0.
\]

Since \(P_1'(x)=c\) and \(P_1''(x)=0\), we obtain
\[
D_{\Xi}[P_1](x)=20cx^2-(25c+6d)x+(6c+5d),
\]
and
\[
D_{\Lambda}[P_1](x)=30cx^2-(32c+12d)x+(6c+8d).
\]
As multiplication by the nonzero constants \(A_1\) and \(B_1\) does not affect the zeros, it suffices to consider
\[
q_{\Xi}(x):=20cx^2-(25c+6d)x+(6c+5d),
\qquad
q_{\Lambda}(x):=30cx^2-(32c+12d)x+(6c+8d).
\]

Evaluating at \(r=d/c\), we get
\[
\begin{cases}
q_{\Xi}\!\left(\dfrac{d}{c}\right)
=2c\left(7\left(\dfrac{d}{c}\right)^2-10\dfrac{d}{c}+3\right),\\[1.2em]
q_{\Lambda}\!\left(\dfrac{d}{c}\right)
=6c\left(3\left(\dfrac{d}{c}\right)^2-4\dfrac{d}{c}+1\right).
\end{cases}
\]
Hence
\[
\begin{cases}
(20c)\,q_{\Xi}\!\left(\dfrac{d}{c}\right)
=40c^2\left(7\left(\dfrac{d}{c}\right)^2-10\dfrac{d}{c}+3\right),\\[1.2em]
(30c)\,q_{\Lambda}\!\left(\dfrac{d}{c}\right)
=180c^2\left(3\left(\dfrac{d}{c}\right)^2-4\dfrac{d}{c}+1\right).
\end{cases}
\]
Since \(40c^2>0\) and \(180c^2>0\), the above criterion yields
\[
\begin{cases}
P_1 \text{ and } P_2^{\Xi} \text{ have strictly interlacing zeros}
\iff
7\left(\dfrac{d}{c}\right)^2-10\dfrac{d}{c}+3<0,\\[1.2em]
P_1 \text{ and } P_2^{\Lambda} \text{ have strictly interlacing zeros}
\iff
3\left(\dfrac{d}{c}\right)^2-4\dfrac{d}{c}+1<0.
\end{cases}
\]
Factoring both quadratic expressions, we obtain
\[
\begin{cases}
7t^2-10t+3=(7t-3)(t-1),\\
3t^2-4t+1=(3t-1)(t-1),
\end{cases}
\qquad\text{where } t=\dfrac{d}{c}.
\]
Therefore,
\[
\begin{cases}
P_1 \text{ and } P_2^{\Xi} \text{ have strictly interlacing zeros}
\iff
\dfrac{3}{7}<\dfrac{d}{c}<1,\\[1.2em]
P_1 \text{ and } P_2^{\Lambda} \text{ have strictly interlacing zeros}
\iff
\dfrac{1}{3}<\dfrac{d}{c}<1.
\end{cases}
\]
This proves the claim.
\end{proof}

\begin{proposition}
Let \(c,d\in\mathbb R\) with \(c\neq 0\), and assume that \(A_n\neq 0\) and \(B_n\neq 0\) for all \(n\ge1\). Then the following hold.
\begin{enumerate}
\item If \(\frac37<\frac dc<1\), then, for every \(n\ge1\), the zeros of \(P_n^\Xi\) and \(P_{n+1}^\Xi\) strictly interlace.

\item If \(\frac13<\frac dc<1\), then, for every \(n\ge1\), the zeros of \(P_n^\Lambda\) and \(P_{n+1}^\Lambda\) strictly interlace.
\end{enumerate}
\end{proposition}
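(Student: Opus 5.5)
The plan is induction on \(n\), with the preceding proposition as base case and the proper-position result \cref{lem:proper-position-preservation} driving the inductive step. I treat the \(\Xi\) case; the \(\Lambda\) case is identical with \(\mathcal D_\Lambda\), \(B_n\) and the threshold \(1/3\) in place of \(\mathcal D_\Xi\), \(A_n\) and \(3/7\).

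\emph{Step 1 (base case and setup).} For \(n=1\), strict interlacing of \(P_1\) and \(P_2^\Xi\) under \(\frac37<\frac dc<1\) is exactly the preceding proposition. Since the zero \(d/c\) of \(P_1\) lies in \((0,1)\), \cref{prop:D-interval-0b} with \(b=1\) shows that every zero of every \(P_n^\Xi\) lies in \((0,1)\). Moreover \(\deg P_n^\Xi=n\), because each \(A_k\neq0\) and \(\mathcal D_\Xi\) raises the degree by exactly one.

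\emph{Step 2 (transport of interlacing).} Two real-rooted polynomials whose degrees differ by one and whose zeros interlace are in proper position up to sign: after possibly replacing one of them by its negative, \(P_n^\Xi\ll P_{n+1}^\Xi\) in the Borcea--Br\"and\'en sense. Applying \cref{lem:proper-position-preservation} gives \(\mathcal D_\Xi[P_n^\Xi]\ll\mathcal D_\Xi[P_{n+1}^\Xi]\). Multiplying by the nonzero constants \(A_n\) and \(A_{n+1}\) may change the sign relation but does not affect zero sets. Hence \(P_{n+1}^\Xi\) and \(P_{n+2}^\Xi\) have interlacing zeros, at least in the weak sense.

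\emph{Step 3 (strictness; the main obstacle).} The Borcea--Br\"and\'en theory only yields weak interlacing, so I must rule out repeated zeros of \(P_{n+1}^\Xi\), \(P_{n+2}^\Xi\) and common zeros of the two. Simplicity of the zeros of each \(P_n^\Xi\) follows from the factorization \(\mathcal D_\Xi=A_1\circ B_1\): by \cref{lem:Bn-interlacing} and \cref{lem:An-interlacing}, both factors send polynomials with simple zeros in \((0,1)\) to polynomials with simple zeros in \((0,1)\). For common zeros I would use Obreschkoff's characterization: \(f\) and \(g\) (with \(\deg g=\deg f+1\)) interlace strictly if and only if every nontrivial combination \(\alpha f+\beta g\) has only real simple zeros. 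Writing \(f=P_n^\Xi\), \(g=P_{n+1}^\Xi\), linearity gives \(\alpha\,\mathcal D_\Xi[f]+\beta\,\mathcal D_\Xi[g]=\mathcal D_\Xi[\alpha f+\beta g]\). It therefore suffices to show that \(A_1\) and \(B_1\) preserve the class of real-rooted polynomials with simple zeros; this class is larger than "zeros in \((0,1)\)", because zeros of \(\alpha f+\beta g\) may escape \((0,1)\).

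I would first try a log-derivative argument for this. A zero of \(B_1[h]\) that is not a zero of \(h\) is a root of \(\phi(x)=2x(x-1)\frac{h'(x)}{h(x)}+2x-1\). Simplicity amounts to checking \(\phi'\neq0\) at such a root, which can be analysed using \(\frac{d}{dx}\frac{h'}{h}=-\sum_j\frac{1}{(x-\lambda_j)^2}<0\). Zeros of \(B_1[h]\) that are also zeros of \(h\) can only occur at \(x=0\) or \(x=1\) and can be handled directly; \(A_1\) is treated in the same way. If this local argument fails at the endpoints \(0,1\) (the zeros of the coefficient \(2x(x-1)\)), the fallback is a counting argument. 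Suppose \(f\) and \(g\) share a zero \(\xi\in(0,1)\). Write \(f=(x-\xi)f_1\) and \(g=(x-\xi)g_1\), where \(f_1\) and \(g_1\) strictly interlace. Then apply the sign-alternation arguments of \cref{lem:An-interlacing} and \cref{lem:Bn-interlacing} to locate one zero in each gap of \(\mathcal D_\Xi[f]\) and \(\mathcal D_\Xi[g]\) and reach a contradiction with the degree.
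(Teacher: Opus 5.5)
\textbf{Verdict.} Your proposal is correct. It follows the same skeleton as the paper but adds an ingredient the paper's proof lacks.

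\textbf{Comparison with the paper.} The paper also takes the base case from the preceding proposition and drives the induction with the proper-position lemma. It fixes the orientation once and for all by passing to \(Q_n=\sigma_nP_n^\Xi\) with \(\sigma_{n+1}=\sigma_n\operatorname{sgn}(A_n)\). Then \(Q_{n+1}=|A_n|\,\mathcal D_\Xi[Q_n]\), and \(Q_n\ll Q_{n+1}\) propagates by induction. Your ``proper position up to sign at each step'' is an equivalent bookkeeping.

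\textbf{Strictness.} This is the genuine difference. The paper goes directly from \(Q_n\ll Q_{n+1}\) to strict interlacing, which proper position alone does not give: for example \(x\ll x^2\), yet the two share the zero \(0\). Your Step 3 supplies this missing argument.

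\textbf{Step 3 suffices on its own.} Your Obreschkoff route makes Step 2, and with it the Borcea--Br\"and\'en lemma, superfluous. If \(f,g\) strictly interlace, every nontrivial \(\alpha f+\beta g\) is real-rooted with simple zeros. Then so is \(\alpha\mathcal D_\Xi[f]+\beta\mathcal D_\Xi[g]=\mathcal D_\Xi[\alpha f+\beta g]\), which is nonzero and has real zeros by hyperbolicity preservation. Strict interlacing of the images follows at once. This uses only hyperbolicity preservation together with simple-zero preservation by \(A_n\) and \(B_n\). Neither the Borcea--Br\"and\'en theorem nor the localization of zeros in \((0,1)\) is needed.

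\textbf{The check you left open.} The log-derivative argument goes through, endpoints included, so no fallback is needed. Put \(L=h'/h\), so \(L'\le 0\).
\begin{itemize}
\item For \(B_n\), take \(\phi=2x(x-1)L+(n+1)x-1\). At a zero \(\xi\notin\{0,1\}\) of \(B_n[h]\) with \(h(\xi)\neq0\), one gets \(\xi(\xi-1)\phi'(\xi)=-(n+1)\xi^2+2\xi-1+2\xi^2(\xi-1)^2L'(\xi)\). This is \(<0\) for \(n\ge1\), because the quadratic \(-(n+1)\xi^2+2\xi-1\) has discriminant \(-4n\) and negative leading coefficient.
\item For \(A_n\), take \(\phi=2(x-1)L+n\). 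One gets \((\xi-1)\phi'(\xi)=-n+2(\xi-1)^2L'(\xi)<0\).
\item Zeros shared with \(h\) can only occur at \(0\) or \(1\). There \(B_n[h]'(0)=-3h'(0)\) and \(B_n[h]'(1)=A_n[h]'(1)=(n+2)h'(1)\), all nonzero when \(h\) has simple zeros.
\end{itemize}

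\textbf{Fallback paragraph.} As written it is muddled: it supposes \(f\) and \(g\) share a zero, which the induction hypothesis already excludes. You can drop it.
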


\begin{proof}
We prove the statement for the \(\Xi\)-sequence; the proof for the \(\Lambda\)-sequence is identical.

Assume that \(\frac37<\frac dc<1\). By the previous proposition, the zeros of \(P_1^\Xi\) and \(P_2^\Xi\) strictly interlace.

Since the factors \(A_n\) may have arbitrary sign, we introduce a sign-corrected sequence by defining
\[
\sigma_1:=1,
\qquad
\sigma_{n+1}:=\sigma_n\,\operatorname{sgn}(A_n)
\qquad (n\ge1),
\]
and setting \(Q_n^\Xi:=\sigma_n P_n^\Xi\). Then, using the recurrence \(P_{n+1}^\Xi=A_n\,\mathcal D_\Xi[P_n^\Xi]\), we obtain
\[
Q_{n+1}^\Xi
=
\sigma_{n+1}P_{n+1}^\Xi
=
\sigma_n\operatorname{sgn}(A_n)\,A_n\,\mathcal D_\Xi[P_n^\Xi]
=
|A_n|\,\mathcal D_\Xi[Q_n^\Xi].
\]
Thus the sequence \((Q_n^\Xi)\) satisfies the same recurrence as \((P_n^\Xi)\), but with positive coefficients \(|A_n|>0\).

Moreover, \(Q_1^\Xi=P_1^\Xi\) and \(Q_2^\Xi=\operatorname{sgn}(A_1)P_2^\Xi\), so \(Q_1^\Xi\) and \(Q_2^\Xi\) still have strictly interlacing zeros. Since multiplication by a positive constant preserves proper position, and since \(\mathcal D_\Xi\) preserves proper position by \cref{lem:proper-position-preservation}, an induction on \(n\) yields
\[
Q_n^\Xi \ll Q_{n+1}^\Xi
\qquad\text{for all } n\ge1.
\]
Hence the zeros of \(Q_n^\Xi\) and \(Q_{n+1}^\Xi\) strictly interlace for every \(n\ge1\).

Finally, \(Q_n^\Xi=\pm P_n^\Xi\) for every \(n\), so \(Q_n^\Xi\) and \(P_n^\Xi\) have the same zeros. Therefore the zeros of \(P_n^\Xi\) and \(P_{n+1}^\Xi\) strictly interlace for every \(n\ge1\).

The proof for the \(\Lambda\)-sequence is the same, starting from the condition \(\frac13<\frac dc<1\) and defining
\[
\sigma_1:=1,
\qquad
\sigma_{n+1}:=\sigma_n\,\operatorname{sgn}(B_n),
\qquad
Q_n^\Lambda:=\sigma_n P_n^\Lambda.
\]
Then
\[
Q_{n+1}^\Lambda=|B_n|\,\mathcal D_\Lambda[Q_n^\Lambda],
\]
and the same inductive argument shows that the zeros of \(P_n^\Lambda\) and \(P_{n+1}^\Lambda\) strictly interlace for all \(n\ge1\).
\end{proof}

\section{Asymptotic zero distribution}

\begin{lemma}\label{lem:log-derivative-decomposition}
For \(n\ge 2\), define
\[
\alpha_n^\Xi:=\frac{4n(2n+1)}{3}\,c,
\qquad
\beta^\Xi:=d-\frac56c,
\]
and
\[
\alpha_n^\Lambda:=
\frac{(2^{2n+3}-1)(2n+1)(2n+2)}{12(2^{2n+1}-1)}\,c,
\qquad
\beta^\Lambda:=d-\frac23c.
\]
Moreover, set
\[
R_n^\Xi(z):=\frac{\widetilde{\Xi}_{n+1}(z)}{\widetilde{\Xi}_n(z)},
\qquad
R_n^\Lambda(z):=\frac{\widetilde{\Lambda}_{n+1}(z)}{\widetilde{\Lambda}_n(z)},
\]
whenever these quotients are defined.

Then, for every \(n\ge 2\), one has
\[
P_n^\Xi(z)
=
-4\left(\prod_{k=1}^{n-1}\frac{A_k}{a_k}\right)
\widetilde{\Xi}_n(z)\bigl(\alpha_n^\Xi R_n^\Xi(z)+\beta^\Xi\bigr),
\]
and
\[
P_n^\Lambda(z)
=
-7\left(\prod_{k=1}^{n-1}\frac{B_k}{b_k}\right)
\widetilde{\Lambda}_n(z)\bigl(\alpha_n^\Lambda R_n^\Lambda(z)+\beta^\Lambda\bigr).
\]
Consequently, wherever the expressions are defined, the normalized logarithmic derivatives satisfy
\[
\frac1n\frac{(P_n^\Xi)'(z)}{P_n^\Xi(z)}
=
\frac1n\frac{\widetilde{\Xi}_n'(z)}{\widetilde{\Xi}_n(z)}
+
\frac1n\,
\frac{\alpha_n^\Xi (R_n^\Xi)'(z)}
{\alpha_n^\Xi R_n^\Xi(z)+\beta^\Xi},
\]
and
\[
\frac1n\frac{(P_n^\Lambda)'(z)}{P_n^\Lambda(z)}
=
\frac1n\frac{\widetilde{\Lambda}_n'(z)}{\widetilde{\Lambda}_n(z)}
+
\frac1n\,
\frac{\alpha_n^\Lambda (R_n^\Lambda)'(z)}
{\alpha_n^\Lambda R_n^\Lambda(z)+\beta^\Lambda}.
\]
\end{lemma}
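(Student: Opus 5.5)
The lemma is a direct reformulation of \cref{prop:closed-form-iterates}, and the plan is to factor and then differentiate logarithmically.

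\textbf{Factorization.} Start from the closed formula of \cref{prop:closed-form-iterates}. With the notation $\alpha_n^\Xi$, $\beta^\Xi$ it reads
\[
P_n^\Xi=-4\Bigl(\prod_{k=1}^{n-1}\tfrac{A_k}{a_k}\Bigr)\bigl(\alpha_n^\Xi\widetilde{\Xi}_{n+1}+\beta^\Xi\widetilde{\Xi}_n\bigr).
\]
At every $z$ with $\widetilde{\Xi}_n(z)\neq0$, i.e. wherever $R_n^\Xi$ is defined, factor out $\widetilde{\Xi}_n(z)$. Using $\widetilde{\Xi}_{n+1}=\widetilde{\Xi}_n R_n^\Xi$, this gives the first displayed identity. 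The $\Lambda$ case is verbatim, with the constant $-7$, the products $B_k/b_k$, and $\alpha_n^\Lambda$, $\beta^\Lambda$.

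\textbf{Logarithmic derivative.} Assume $P_n^\Xi(z)\neq0$ and $\widetilde{\Xi}_n(z)\neq0$. Then $\alpha_n^\Xi R_n^\Xi(z)+\beta^\Xi\neq0$, and the scalar prefactor is nonzero; this needs the $A_k\neq0$ standing assumption, while the $a_k$ are nonzero by definition. The logarithmic derivative of a product of a constant and two functions is the sum of the logarithmic derivatives of the two functions, so
\[
\frac{(P_n^\Xi)'}{P_n^\Xi}=\frac{\widetilde{\Xi}_n'}{\widetilde{\Xi}_n}+\frac{\alpha_n^\Xi(R_n^\Xi)'}{\alpha_n^\Xi R_n^\Xi+\beta^\Xi}.
\]
Dividing by $n$ yields the claim, and the $\Lambda$ case is identical.

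\textbf{Obstacle.} The only delicate point is bookkeeping: making precise what ``wherever the expressions are defined'' means. I would take it as: $z$ is not a zero of $\widetilde{\Xi}_n$ and not a zero of $P_n^\Xi$. This is a cofinite subset of $\mathbb C$, so the identity holds there pointwise, and as an identity of rational functions it holds everywhere. One should also note that $\alpha_n^\Xi R_n^\Xi+\beta^\Xi$ is not identically zero. Indeed, if it were, $P_n^\Xi$ would vanish identically, which contradicts the fact that $\mathcal D_\Xi$ raises degree by one whenever $P_1\not\equiv0$ (here $c\neq0$). Beyond this, the argument is routine algebra.
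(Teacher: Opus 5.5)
Your proposal is correct and follows the paper's proof: you rewrite \cref{prop:closed-form-iterates} with $\alpha_n^\Xi,\beta^\Xi$ (resp.\ $\alpha_n^\Lambda,\beta^\Lambda$), factor out $\widetilde{\Xi}_n(z)$ (resp.\ $\widetilde{\Lambda}_n(z)$), and take logarithmic derivatives using that the prefactor and $\beta$ do not depend on $z$. Your bookkeeping about where the identity holds is fine and a bit more careful than the paper's. One small point: $P_n^\Xi(z)\neq0$ already forces the scalar prefactor to be nonzero, so you do not need a separate hypothesis $A_k\neq0$ for the pointwise identity.
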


\begin{proof}
By \cref{prop:closed-form-iterates}, for every \(n\ge 2\),
\[
P_n^\Xi(z)
=
-4\left(\prod_{k=1}^{n-1}\frac{A_k}{a_k}\right)
\left(
\frac{4n(2n+1)}{3}\,c\,\widetilde{\Xi}_{n+1}(z)
+
\left(d-\frac56c\right)\widetilde{\Xi}_n(z)
\right).
\]
With the above notation, this becomes
\[
P_n^\Xi(z)
=
-4\left(\prod_{k=1}^{n-1}\frac{A_k}{a_k}\right)
\bigl(
\alpha_n^\Xi \widetilde{\Xi}_{n+1}(z)+\beta^\Xi \widetilde{\Xi}_n(z)
\bigr).
\]
Factoring out \(\widetilde{\Xi}_n(z)\), we obtain
\[
P_n^\Xi(z)
=
-4\left(\prod_{k=1}^{n-1}\frac{A_k}{a_k}\right)
\widetilde{\Xi}_n(z)\bigl(\alpha_n^\Xi R_n^\Xi(z)+\beta^\Xi\bigr),
\]
whenever \(R_n^\Xi(z)\) is defined. Since the prefactor \(\displaystyle -4\prod_{k=1}^{n-1}\frac{A_k}{a_k}\) is independent of \(z\), taking logarithmic derivatives yields
\[
\frac{(P_n^\Xi)'(z)}{P_n^\Xi(z)}
=
\frac{\widetilde{\Xi}_n'(z)}{\widetilde{\Xi}_n(z)}
+
\frac{\bigl(\alpha_n^\Xi R_n^\Xi(z)+\beta^\Xi\bigr)'}
{\alpha_n^\Xi R_n^\Xi(z)+\beta^\Xi}.
\]
As \(\alpha_n^\Xi\) and \(\beta^\Xi\) depend only on \(n,c,d\), not on \(z\), we have
\[
\bigl(\alpha_n^\Xi R_n^\Xi(z)+\beta^\Xi\bigr)'
=
\alpha_n^\Xi (R_n^\Xi)'(z).
\]
Therefore
\[
\frac{(P_n^\Xi)'(z)}{P_n^\Xi(z)}
=
\frac{\widetilde{\Xi}_n'(z)}{\widetilde{\Xi}_n(z)}
+
\frac{\alpha_n^\Xi (R_n^\Xi)'(z)}
{\alpha_n^\Xi R_n^\Xi(z)+\beta^\Xi},
\]
and dividing by \(n\) gives the first claimed identity.

The proof for the \(\Lambda\)-sequence is identical. Indeed, by \cref{prop:closed-form-iterates},
\[
P_n^\Lambda(z)
=
-7\left(\prod_{k=1}^{n-1}\frac{B_k}{b_k}\right)
\left(
\alpha_n^\Lambda \widetilde{\Lambda}_{n+1}(z)
+
\beta^\Lambda \widetilde{\Lambda}_n(z)
\right),
\]
hence
\[
P_n^\Lambda(z)
=
-7\left(\prod_{k=1}^{n-1}\frac{B_k}{b_k}\right)
\widetilde{\Lambda}_n(z)\bigl(\alpha_n^\Lambda R_n^\Lambda(z)+\beta^\Lambda\bigr).
\]
Taking logarithmic derivatives and using again that \(\alpha_n^\Lambda\) and \(\beta^\Lambda\) are constant with respect to \(z\), we obtain
\[
\frac{(P_n^\Lambda)'(z)}{P_n^\Lambda(z)}
=
\frac{\widetilde{\Lambda}_n'(z)}{\widetilde{\Lambda}_n(z)}
+
\frac{\alpha_n^\Lambda (R_n^\Lambda)'(z)}
{\alpha_n^\Lambda R_n^\Lambda(z)+\beta^\Lambda}.
\]
Dividing by \(n\) completes the proof.
\end{proof}

\begin{lemma}\label{lem:ratio-asymptotics}
Let
\[
\Omega:=\mathbb C\setminus(-\infty,1],
\]
fix the branch of \(\sqrt{\cdot}\) on \(\Omega\) satisfying
\[
\Re(\sqrt z)>0 \qquad (z\in\Omega),
\]
and define
\[
u(z):=\frac{\sqrt z-1}{\sqrt z+1},
\qquad z\in\Omega.
\]
For \(n\ge1\), set
\[
R_n^\Xi(z):=\frac{\widetilde{\Xi}_{n+1}(z)}{\widetilde{\Xi}_n(z)},
\qquad
R_n^\Lambda(z):=\frac{\widetilde{\Lambda}_{n+1}(z)}{\widetilde{\Lambda}_n(z)}.
\]
Then, for every \(z\in\Omega\) and every \(n\ge1\),
\[
R_n^\Xi(z)
=
-\frac{(1+\sqrt z)^2}{16(2n)(2n+1)}
\frac{B_{2n+1}(u(z))}{B_{2n-1}(u(z))},
\]
and
\[
R_n^\Lambda(z)
=
-\frac{(2^{2n+1}-1)(1+\sqrt z)^2}{(2^{2n+3}-1)(2n+1)(2n+2)}
\frac{A_{2n+2}(u(z))}{A_{2n}(u(z))}.
\]
Moreover,
\[
R_n^\Xi(z)\longrightarrow -\frac{1}{(\log u(z))^2},
\qquad
R_n^\Lambda(z)\longrightarrow -\frac{1}{(\log u(z))^2},
\qquad n\to\infty,
\]
locally uniformly on \(\Omega\), where \(\log\) denotes the principal branch.
\end{lemma}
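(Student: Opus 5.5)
The plan is to derive both parts of \cref{lem:ratio-asymptotics} from an explicit representation of \(\widetilde{\Xi}_n\) and \(\widetilde{\Lambda}_n\) through Eulerian polynomials. Such a representation is implicit in \cite{TallaWaffo2026arxiv2602.16761}, where \(\Xi_n,\Lambda_n\) come from rational expressions for polylogarithms of negative order. I will first establish, by induction on \(n\) from the recurrences \(\widetilde{\Xi}_{n+1}=a_n\mathcal D_\Xi[\widetilde{\Xi}_n]\) and \(\widetilde{\Lambda}_{n+1}=b_n\mathcal D_\Lambda[\widetilde{\Lambda}_n]\), identities of the form
\[
\widetilde{\Xi}_n(z)=\kappa_n\,(1+\sqrt z)^{2n}\,B_{2n-1}(u(z)),
\qquad
\widetilde{\Lambda}_n(z)=\theta_n\,(1+\sqrt z)^{2n+2}\,A_{2n}(u(z)),
\]
with \(\kappa_{n+1}/\kappa_n=-1/(16(2n)(2n+1))\) and \(\theta_{n+1}/\theta_n=-(2^{2n+1}-1)/((2^{2n+3}-1)(2n+1)(2n+2))\). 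The exponents of \(1+\sqrt z\) are fixed only up to a common shift; what matters is that they increase by \(2\) per step.

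The induction step uses the substitution \(x=\sqrt z\) and \(u=(x-1)/(x+1)\). The recurrences for \(\Xi_n,\Lambda_n\) given in the introduction become the standard derivative recurrences
\[
B_{m+1}(t)=(1+(2m+1)t)B_m(t)+2t(1-t)B_m'(t),
\qquad
A_{m+1}(t)=(1+mt)A_m(t)+t(1-t)A_m'(t),
\]
applied twice. The factor \((1+\sqrt z)^2\) accounts for the change of variable, since \(1-u=2/(1+\sqrt z)\). Equivalently, one can use \(A_m(t)=\tfrac{(1-t)^{m+1}}{t}\operatorname{Li}_{-m}(t)\) and \(B_m(t)=(1-t)^{m+1}\sum_{k\ge0}(2k+1)^m t^k\). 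The base cases \(\widetilde{\Xi}_1=\tfrac14\) and \(\widetilde{\Lambda}_1=\tfrac17\) fix \(\kappa_1,\theta_1\). Taking quotients then gives the two stated formulas for \(R_n^\Xi,R_n^\Lambda\). These quotients are well defined on \(\Omega\) because the Eulerian polynomials of types \(A\) and \(B\) have only real nonpositive zeros, while \(u(\Omega)\) avoids \((-\infty,0]\).

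I expect this bookkeeping of the normalization to be the main computational step. It should be mechanical once the representation from \cite{TallaWaffo2026arxiv2602.16761} is quoted.

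For the limits, I first locate \(u(\Omega)\). For \(z\in\Omega\) we have \(\Re\sqrt z>0\), so \(|u|<1\). The point \(u\) is real exactly when \(z\in(1,\infty)\), and then \(u\in(0,1)\). Hence \(u(\Omega)\subset\mathbb D\setminus(-1,0]\), and \(w:=\log u\) satisfies \(\Re w<0\), \(|\Im w|<\pi\), \(w\ne0\). Next I use the partial-fraction expansion coming from the poles of \(1/(e^{s}-1)\):
\[
\operatorname{Li}_{-m}(e^{w})=m!\sum_{k\in\mathbb Z}(2\pi i k-w)^{-m-1},
\qquad
\sum_{k\ge0}(2k+1)^m e^{wk}=\frac{m!\,2^{m}}{2}\sum_{k\in\mathbb Z}(\pi i k-w/2)^{-m-1}\cdot(\pm1)^k,
\]
the second being the analogous expansion for the type \(B\) sum, up to the precise phase factors. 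Since \(|\Im w|<\pi\), the \(k=0\) term strictly dominates all others, uniformly for \(w\) in compacts. Therefore
\[
A_m(u)\sim \frac{(1-u)^{m+1}m!}{u\,(-w)^{m+1}},
\qquad
B_m(u)\sim \frac{(1-u)^{m+1}m!\,2^{m}}{(-w)^{m+1}},
\]
locally uniformly.

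Substituting these asymptotics, the ratio \(A_{2n+2}/A_{2n}\) is asymptotic to \((2n+1)(2n+2)(1-u)^2/w^2\), and \(B_{2n+1}/B_{2n-1}\) to \(4(2n)(2n+1)(1-u)^2/w^2\). Using \((1+\sqrt z)^2(1-u)^2=4\) and \((2^{2n+1}-1)/(2^{2n+3}-1)\to\tfrac14\), both \(R_n^\Xi\) and \(R_n^\Lambda\) tend to \(-1/(\log u)^2\).

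The main analytic obstacle is making the dominance of the \(k=0\) term rigorous and locally uniform. On a compact set \(K\subset\Omega\), the quantity \(\max_{k\neq0}|w/(2\pi i k-w)|\) is bounded by some \(q<1\), and \(\sum_{k\ne0}|w/(2\pi i k - w)|^{m+1}\le C q^{m}\) uniformly on \(K\). For the type \(B\) sum I will check carefully that the alternating or shifted lattice of poles still keeps the \(k=0\) term strictly dominant whenever \(|\Im w|<\pi\). If it does not, I would instead write \(B_m\) via \(\operatorname{Li}_{-m}(\sqrt u)\) and \(\operatorname{Li}_{-m}(-\sqrt u)\) and apply the type \(A\) estimate to each term.
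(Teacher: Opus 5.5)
Your argument is correct in substance, but it follows a different route from the paper's. The paper takes both ingredients from the earlier work. It quotes the closed forms
\[
\widetilde{\Xi}_n(z)=\frac{(-1)^{n+1}}{2^{4n-1}(2n-1)!}\,\frac{(1+\sqrt z)^{2n-1}}{\sqrt z}\,B_{2n-1}(u(z)),
\qquad
\widetilde{\Lambda}_n(z)=\frac{(-1)^{n+1}}{(2^{2n+1}-1)(2n)!}\,\frac{(1+\sqrt z)^{2n-1}}{\sqrt z}\,A_{2n}(u(z))
\]
and divides them. For the limits it invokes the one-step quotient asymptotics $\frac1m A_{m+1}(u)/A_m(u)\to(1-u)/(-\log u)$ and $\frac1m B_{m+1}(u)/B_m(u)\to 2(1-u)/(-\log u)$, writing each two-step ratio as a product of two one-step ratios.

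You instead prove both ingredients, and both of your arguments work. Your induction is exactly what \cref{prop:factorization} encodes. With $s=\sqrt x$ and $u=(s-1)/(s+1)$, the factor $2x(x-1)D+(2x-1)$ sends $\frac{(1+s)^m}{s}B_m(u)$ to $\frac{(1+s)^{m+1}}{2}B_{m+1}(u)$. The factor $2(x-1)D+1$ then sends $(1+s)^{m+1}B_{m+1}(u)$ to $\frac{(1+s)^{m+2}}{2s}B_{m+2}(u)$. Likewise, $2x(x-1)D+(3x-1)$ and $2(x-1)D+2$ each perform one step of the type $A$ recurrence, so $\mathcal D_\Lambda\bigl[\frac{(1+s)^{2n-1}}{s}A_{2n}(u)\bigr]=\frac{(1+s)^{2n+1}}{s}A_{2n+2}(u)$. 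This gives $\kappa_{n+1}=a_n\kappa_n/4$ and $\theta_{n+1}=b_n\theta_n$, matching your constants.

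Your partial-fraction argument gives asymptotics for $A_m(u)$ and $B_m(u)$ themselves, with relative error $O(q^m)$ uniformly on compacts. This yields a geometric rate for $R_n^\Xi,R_n^\Lambda\to-(\log u)^{-2}$, which the cited one-step limits alone do not give. The paper's version is shorter but rests entirely on imported results. Your observation that the quotients are defined on $\Omega$ also fills a point the paper leaves implicit: Eulerian polynomials have only negative zeros, while $u(\Omega)\subset\{|w|<1\}\setminus(-1,0]$.

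One correction is needed. The representations you wrote, $\kappa_n(1+\sqrt z)^{2n}B_{2n-1}(u)$ and $\theta_n(1+\sqrt z)^{2n+2}A_{2n}(u)$, are false already at $n=1$. Since $B_1(u)=A_2(u)=1+u=2\sqrt z/(1+\sqrt z)$, they equal $2\kappa_1\sqrt z(1+\sqrt z)$ and $2\theta_1\sqrt z(1+\sqrt z)^3$. No constant choice makes these equal to $\frac14$ and $\frac17$, so the induction cannot start. The correct prefactor is $(1+\sqrt z)^{2n-1}/\sqrt z$ in both cases, with $\kappa_1=\frac18$ and $\theta_1=\frac1{14}$. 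This is not a mere common shift of the exponent, because the factor $1/\sqrt z$ is also required. The discrepancy is $n$-independent, so your ratio formulas and your values of $\kappa_{n+1}/\kappa_n$ and $\theta_{n+1}/\theta_n$ are unaffected.

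The type $B$ expansion also needs fixing. It should read $\sum_{k\ge0}(2k+1)^m e^{wk}=m!\,2^m e^{-w/2}\sum_{k\in\mathbb Z}(-1)^k(2\pi i k-w)^{-m-1}$. The poles lie on the same lattice $2\pi i\mathbb Z$ as in type $A$, so the dominance check under $|\Im w|<\pi$ is identical and your fallback is unnecessary. The extra factor $e^{-w/2}$ in the asymptotic for $B_m(u)$ is $n$-independent and cancels in the ratio.
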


\begin{proof}
By the explicit formulae for \(\Xi_n\) and \(\Lambda_n\), one has
\[
\Xi_n(x)
=
\frac{(-1)^{n+1}}{2^{4n-1}(2n-1)!}\,
\frac{(1+x)^{2n-1}}{x}\,
B_{2n-1}\!\left(-\frac{1-x}{1+x}\right),
\]
and
\[
\Lambda_n(x)
=
\frac{(-1)^{n+1}}{(2^{2n+1}-1)(2n)!}\,
\frac{(1+x)^{2n-1}}{x}\,
A_{2n}\!\left(-\frac{1-x}{1+x}\right).
\]
Since \(\widetilde{\Xi}_n(z)=\Xi_n(\sqrt z)\) and \(\widetilde{\Lambda}_n(z)=\Lambda_n(\sqrt z)\), this gives
\[
\widetilde{\Xi}_n(z)
=
\frac{(-1)^{n+1}}{2^{4n-1}(2n-1)!}\,
\frac{(1+\sqrt z)^{2n-1}}{\sqrt z}\,
B_{2n-1}(u(z)),
\]
and
\[
\widetilde{\Lambda}_n(z)
=
\frac{(-1)^{n+1}}{(2^{2n+1}-1)(2n)!}\,
\frac{(1+\sqrt z)^{2n-1}}{\sqrt z}\,
A_{2n}(u(z)).
\]

We first compute the quotient \(R_n^\Xi\). Dividing the expression for \(\widetilde{\Xi}_{n+1}(z)\) by that for \(\widetilde{\Xi}_n(z)\), we obtain
\[
R_n^\Xi(z)
=
\frac{(-1)^{n+2}}{2^{4n+3}(2n+1)!}\,
\frac{(1+\sqrt z)^{2n+1}}{\sqrt z}\,
B_{2n+1}(u(z))
\cdot
\frac{2^{4n-1}(2n-1)!}{(-1)^{n+1}}\,
\frac{\sqrt z}{(1+\sqrt z)^{2n-1}}\,
\frac{1}{B_{2n-1}(u(z))}.
\]
After simplification, \(\displaystyle \frac{(-1)^{n+2}}{(-1)^{n+1}}=-1\), \(\displaystyle \frac{2^{4n-1}}{2^{4n+3}}=\frac1{16}\), \(\displaystyle \frac{(2n-1)!}{(2n+1)!}=\frac{1}{(2n)(2n+1)}\), and \(\displaystyle \frac{(1+\sqrt z)^{2n+1}}{(1+\sqrt z)^{2n-1}}=(1+\sqrt z)^2\), so that
\[
R_n^\Xi(z)
=
-\frac{(1+\sqrt z)^2}{16(2n)(2n+1)}
\frac{B_{2n+1}(u(z))}{B_{2n-1}(u(z))}.
\]

The computation for \(R_n^\Lambda\) is analogous. Indeed,
\[
R_n^\Lambda(z)
=
\frac{(-1)^{n+2}}{(2^{2n+3}-1)(2n+2)!}\,
\frac{(1+\sqrt z)^{2n+1}}{\sqrt z}\,
A_{2n+2}(u(z))
\cdot
\frac{(2^{2n+1}-1)(2n)!}{(-1)^{n+1}}\,
\frac{\sqrt z}{(1+\sqrt z)^{2n-1}}\,
\frac{1}{A_{2n}(u(z))},
\]
hence
\[
R_n^\Lambda(z)
=
-\frac{(2^{2n+1}-1)(1+\sqrt z)^2}{(2^{2n+3}-1)(2n+1)(2n+2)}
\frac{A_{2n+2}(u(z))}{A_{2n}(u(z))}.
\]

We now prove the limits. By the choice of the branch of \(\sqrt{\cdot}\), one has \(|u(z)|<1\) for every \(z\in\Omega\); moreover \(u(z)\notin(-1,0]\). Hence \(u(z)\in D^*:=\{w\in\mathbb C:\ |w|<1\}\setminus(-1,0]\), and we may apply the Eulerian quotient asymptotics at the point \(u(z)\). More precisely,
\[
\frac1m\frac{A_{m+1}(u(z))}{A_m(u(z))}
\longrightarrow
\frac{1-u(z)}{-\log u(z)},
\]
and
\[
\frac1m\frac{B_{m+1}(u(z))}{B_m(u(z))}
\longrightarrow
\frac{2(1-u(z))}{-\log u(z)},
\]
locally uniformly on \(\Omega\). Therefore
\[
\frac{B_{2n+1}(u(z))}{B_{2n-1}(u(z))}
=
\left(\frac1{2n}\frac{B_{2n+1}(u(z))}{B_{2n}(u(z))}\right)
\left(\frac1{2n-1}\frac{B_{2n}(u(z))}{B_{2n-1}(u(z))}\right)
2n(2n-1),
\]
and so
\[
R_n^\Xi(z)
=
-\frac{(1+\sqrt z)^2(2n-1)}{16(2n+1)}
\left(\frac1{2n}\frac{B_{2n+1}(u(z))}{B_{2n}(u(z))}\right)
\left(\frac1{2n-1}\frac{B_{2n}(u(z))}{B_{2n-1}(u(z))}\right).
\]
Passing to the limit yields
\[
R_n^\Xi(z)
\longrightarrow
-\frac{(1+\sqrt z)^2}{16}
\left(\frac{2(1-u(z))}{-\log u(z)}\right)^2
=
-\frac{(1+\sqrt z)^2(1-u(z))^2}{4(\log u(z))^2}.
\]
Since \(\displaystyle 1-u(z)=1-\frac{\sqrt z-1}{\sqrt z+1}=\frac{2}{1+\sqrt z}\), we get \((1+\sqrt z)^2(1-u(z))^2=4\), and hence
\[
R_n^\Xi(z)\longrightarrow -\frac{1}{(\log u(z))^2}
\]
locally uniformly on \(\Omega\).

Likewise,
\[
\frac{A_{2n+2}(u(z))}{A_{2n}(u(z))}
=
\left(\frac1{2n+1}\frac{A_{2n+2}(u(z))}{A_{2n+1}(u(z))}\right)
\left(\frac1{2n}\frac{A_{2n+1}(u(z))}{A_{2n}(u(z))}\right)
(2n+1)(2n),
\]
and therefore
\[
R_n^\Lambda(z)
=
-(1+\sqrt z)^2
\frac{2^{2n+1}-1}{2^{2n+3}-1}
\frac{2n}{2n+2}
\left(\frac1{2n+1}\frac{A_{2n+2}(u(z))}{A_{2n+1}(u(z))}\right)
\left(\frac1{2n}\frac{A_{2n+1}(u(z))}{A_{2n}(u(z))}\right).
\]
Passing to the limit gives
\[
R_n^\Lambda(z)
\longrightarrow
-(1+\sqrt z)^2\cdot\frac14
\left(\frac{1-u(z)}{-\log u(z)}\right)^2
=
-\frac{(1+\sqrt z)^2(1-u(z))^2}{4(\log u(z))^2}
=
-\frac{1}{(\log u(z))^2},
\]
again locally uniformly on \(\Omega\). This completes the proof.
\end{proof}

\begin{lemma}\label{lem:correction-term-vanishes}
Let
\[
\Omega:=\mathbb C\setminus(-\infty,1],
\]
fix the branch of \(\sqrt{\cdot}\) on \(\Omega\) satisfying
\[
\Re(\sqrt z)>0 \qquad (z\in\Omega),
\]
and define
\[
u(z):=\frac{\sqrt z-1}{\sqrt z+1},
\qquad
R(z):=-\frac{1}{(\log u(z))^2},
\qquad z\in\Omega,
\]
where \(\log\) denotes the principal branch.

Assume that
\[
R_n^\Xi(z):=\frac{\widetilde{\Xi}_{n+1}(z)}{\widetilde{\Xi}_n(z)},
\qquad
R_n^\Lambda(z):=\frac{\widetilde{\Lambda}_{n+1}(z)}{\widetilde{\Lambda}_n(z)}
\]
satisfy
\[
R_n^\Xi \longrightarrow R,
\qquad
R_n^\Lambda \longrightarrow R
\]
locally uniformly on \(\Omega\) as \(n\to\infty\). Let
\[
\alpha_n^\Xi:=\frac{4n(2n+1)}{3}\,c,
\qquad
\beta^\Xi:=d-\frac56c,
\]
and
\[
\alpha_n^\Lambda:=
\frac{(2^{2n+3}-1)(2n+1)(2n+2)}{12(2^{2n+1}-1)}\,c,
\qquad
\beta^\Lambda:=d-\frac23c.
\]
If \(c\neq 0\), then
\[
\frac1n\,
\frac{\alpha_n^\Xi (R_n^\Xi)'(z)}
{\alpha_n^\Xi R_n^\Xi(z)+\beta^\Xi}
\longrightarrow 0,
\qquad
\frac1n\,
\frac{\alpha_n^\Lambda (R_n^\Lambda)'(z)}
{\alpha_n^\Lambda R_n^\Lambda(z)+\beta^\Lambda}
\longrightarrow 0,
\]
locally uniformly on \(\Omega\).
\end{lemma}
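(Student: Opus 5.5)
The plan is to divide numerator and denominator of each correction term by \(\alpha_n\), and then show that the resulting quotient stays bounded on compact subsets of \(\Omega\), so that the prefactor \(1/n\) forces it to zero.

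Since \(c\neq0\), we have \(\alpha_n^\Xi\neq0\) and \(\alpha_n^\Lambda\neq0\) for all \(n\ge2\), so we may rewrite
\[
\frac1n\,\frac{\alpha_n^\Xi (R_n^\Xi)'(z)}{\alpha_n^\Xi R_n^\Xi(z)+\beta^\Xi}
=\frac1n\,\frac{(R_n^\Xi)'(z)}{R_n^\Xi(z)+\beta^\Xi/\alpha_n^\Xi},
\]
and likewise for \(\Lambda\). Here \(|\alpha_n^\Xi|\asymp n^2\), and \(|\alpha_n^\Lambda|\asymp n^2\) because \((2^{2n+3}-1)/(2^{2n+1}-1)\to4\). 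Hence \(\beta/\alpha_n\to0\) for both families. It then suffices to prove, on each compact \(K\subset\Omega\), two bounds for \(n\) large:
\[
\sup_K|R_n'| \text{ is bounded}, \qquad \inf_K|R_n+\beta/\alpha_n|\ge\delta/2>0 .
\]
With these, the term is \(O(1/n)\) uniformly on \(K\).

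\textbf{Holomorphy of \(R_n\).} I first check that \(R_n^\Xi\) and \(R_n^\Lambda\) are holomorphic on \(\Omega\). The initial values \(\widetilde{\Xi}_1=\frac14\) and \(\widetilde{\Lambda}_1=\frac17\) are nonzero constants, so they vacuously have all zeros in \((0,1)\). By \cref{prop:D-interval-0b} with \(b=1\), applied inductively (multiplication by \(a_n\neq0\) or \(b_n\neq0\) does not move zeros), every \(\widetilde{\Xi}_n\) and \(\widetilde{\Lambda}_n\) has all its zeros in \((0,1)\subset(-\infty,1]\). So the denominators do not vanish on \(\Omega\), and \(R_n\) is holomorphic there.

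\textbf{Bound on \(R_n'\).} By the assumed locally uniform convergence \(R_n\to R\) and Weierstrass' theorem (Cauchy estimates on a slightly larger compact set inside \(\Omega\)), \(R_n'\to R'\) locally uniformly. In particular \(\sup_K|R_n'|\) is bounded for large \(n\).

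\textbf{Lower bound on the denominator.} This is the step needing care: I must show \(R\) has no zeros on \(\Omega\) and is finite there, i.e. that \(\log u(z)\) is finite and nonzero.
\begin{itemize}
\item For \(z\in\Omega\) we have \(z\neq1\), so \(u(z)\neq0\), and \(\log u(z)\) is finite.
\item \(|u(z)|<1\), as noted in \cref{lem:ratio-asymptotics}, so \(u(z)\neq1\) and \(\log u(z)\neq0\).
\item \(u(z)\notin(-1,0]\), so \(u\) stays off the branch cut and \(\log u\) is holomorphic on \(\Omega\).
\end{itemize}
Hence \(R=-1/(\log u)^2\) is holomorphic and zero-free on \(\Omega\). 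By continuity and compactness, \(|R|\ge\delta>0\) on \(K\). Locally uniform convergence together with \(\beta/\alpha_n\to0\) then gives \(|R_n+\beta/\alpha_n|\ge\delta/2\) on \(K\) for \(n\) large.

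Combining the two bounds, each correction term is bounded in modulus by \(C_K/n\) on \(K\), which proves both limits. The argument is identical for \(\Xi\) and \(\Lambda\).
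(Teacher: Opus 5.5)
Your proof is correct and follows essentially the same route as the paper: zero-freeness of $R$ gives a positive lower bound on each compact $K$, Cauchy estimates give a uniform bound on $(R_n)'$, and $\beta/\alpha_n\to 0$ keeps the denominator away from zero, so each term is $O(1/n)$ uniformly on $K$. Your explicit check, via \cref{prop:D-interval-0b}, that $R_n^\Xi$ and $R_n^\Lambda$ are holomorphic on $\Omega$ is a small but welcome addition; the paper only asserts this.
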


\begin{proof}
We prove the statement for the \(\Xi\)-sequence; the proof for the \(\Lambda\)-sequence is identical.

Let \(K\subset\Omega\) be compact. Since \(u\) is holomorphic on \(\Omega\), the function \(\displaystyle R(z)=-\frac{1}{(\log u(z))^2}\) is holomorphic on \(\Omega\). Moreover, \(R\) has no zeros on \(\Omega\). Indeed, for \(z\in\Omega\) one has \(|u(z)|<1\), hence \(u(z)\neq 1\); moreover \(u(z)\notin(-1,0]\), so the principal logarithm is well defined and \(\log u(z)\neq 0\). Therefore \(R(z)\neq 0\) for every \(z\in\Omega\).

Since \(R\) is continuous and nowhere vanishing, there exists a constant \(m_K>0\) such that
\[
|R(z)|\ge m_K
\qquad (z\in K).
\]
By the locally uniform convergence \(R_n^\Xi\to R\) on \(\Omega\), there exists \(N_1\in\mathbb N\) such that for all \(n\ge N_1\),
\[
|R_n^\Xi(z)-R(z)|\le \frac{m_K}{2}
\qquad (z\in K).
\]
Hence
\[
|R_n^\Xi(z)|\ge \frac{m_K}{2}
\qquad (z\in K,\ n\ge N_1).
\]

Since each \(R_n^\Xi\) is holomorphic on \(\Omega\) and \(R_n^\Xi\to R\) locally uniformly, Cauchy's integral formula for derivatives implies that \((R_n^\Xi)'\to R'\) locally uniformly on \(\Omega\). In particular, there exist \(M_K>0\) and \(N_2\in\mathbb N\) such that
\[
|(R_n^\Xi)'(z)|\le M_K
\qquad (z\in K,\ n\ge N_2).
\]

Now \(c\neq 0\) implies \(|\alpha_n^\Xi|\to\infty\), since \(\displaystyle \alpha_n^\Xi=\frac{4n(2n+1)}{3}\,c\). Hence there exists \(N_3\in\mathbb N\) such that for all \(n\ge N_3\),
\[
\frac{|\beta^\Xi|}{|\alpha_n^\Xi|}\le \frac{m_K}{4}.
\]
Therefore, for \(z\in K\) and \(n\ge N:=\max\{N_1,N_2,N_3\}\),
\[
\left|\alpha_n^\Xi R_n^\Xi(z)+\beta^\Xi\right|
=
|\alpha_n^\Xi|
\left|R_n^\Xi(z)+\frac{\beta^\Xi}{\alpha_n^\Xi}\right|
\ge
|\alpha_n^\Xi|
\left(
|R_n^\Xi(z)|-\frac{|\beta^\Xi|}{|\alpha_n^\Xi|}
\right)
\ge
|\alpha_n^\Xi|\frac{m_K}{4}.
\]
It follows that for \(z\in K\) and \(n\ge N\),
\[
\left|
\frac1n\,
\frac{\alpha_n^\Xi (R_n^\Xi)'(z)}
{\alpha_n^\Xi R_n^\Xi(z)+\beta^\Xi}
\right|
\le
\frac1n\,
\frac{|\alpha_n^\Xi|\,M_K}{|\alpha_n^\Xi|\,m_K/4}
=
\frac{4M_K}{m_K}\cdot \frac1n.
\]
The right-hand side tends to \(0\) as \(n\to\infty\), uniformly for \(z\in K\). Thus
\[
\frac1n\,
\frac{\alpha_n^\Xi (R_n^\Xi)'(z)}
{\alpha_n^\Xi R_n^\Xi(z)+\beta^\Xi}
\longrightarrow 0
\]
uniformly on \(K\). Since \(K\subset\Omega\) was arbitrary, the convergence is locally uniform on \(\Omega\).

The proof for the \(\Lambda\)-sequence is the same. One uses that \(R_n^\Lambda\to R\) locally uniformly on \(\Omega\), that \(R\) is nowhere vanishing on \(\Omega\), that \((R_n^\Lambda)'\to R'\) locally uniformly on \(\Omega\), and that \(|\alpha_n^\Lambda|\to\infty\) because
\[
\alpha_n^\Lambda=
\frac{(2^{2n+3}-1)(2n+1)(2n+2)}{12(2^{2n+1}-1)}\,c
\]
and \(c\neq 0\). This yields
\[
\frac1n\,
\frac{\alpha_n^\Lambda (R_n^\Lambda)'(z)}
{\alpha_n^\Lambda R_n^\Lambda(z)+\beta^\Lambda}
\longrightarrow 0
\]
locally uniformly on \(\Omega\), as claimed.
\end{proof}

\begin{theorem}\label{thm:log-derivative-general-sequences}
Let
\[
\Omega:=\mathbb C\setminus(-\infty,1],
\]
fix the branch of \(\sqrt{\cdot}\) on \(\Omega\) satisfying
\[
\Re(\sqrt z)>0 \qquad (z\in\Omega),
\]
and define
\[
u(z):=\frac{\sqrt z-1}{\sqrt z+1},
\qquad z\in\Omega,
\]
where \(\log\) denotes the principal branch. Assume that \(c\neq 0\), and for \(n\ge2\) define
\[
s_n^\Xi(z):=\frac1n\frac{(P_n^\Xi)'(z)}{P_n^\Xi(z)},
\qquad
s_n^\Lambda(z):=\frac1n\frac{(P_n^\Lambda)'(z)}{P_n^\Lambda(z)},
\]
whenever these expressions are defined.

Then, as \(n\to\infty\),
\[
s_n^\Xi(z)\longrightarrow
\frac{1}{\sqrt z(1+\sqrt z)}
+
\frac{1}{\sqrt z(1-\sqrt z)}
\left(
u(z)+\frac{1-u(z)}{\log u(z)}
\right),
\]
and
\[
s_n^\Lambda(z)\longrightarrow
\frac{1}{\sqrt z(1+\sqrt z)}
+
\frac{1}{\sqrt z(1-\sqrt z)}
\left(
u(z)+\frac{1-u(z)}{\log u(z)}
\right),
\]
locally uniformly on \(\Omega\).
\end{theorem}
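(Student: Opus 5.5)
The plan is to combine the three preceding lemmas and reduce everything to the logarithmic derivative of the auxiliary families. By \cref{lem:log-derivative-decomposition}, $s_n^\Xi(z)$ is the sum of $\frac1n\widetilde{\Xi}_n'(z)/\widetilde{\Xi}_n(z)$ and a correction term. \cref{lem:correction-term-vanishes} shows that this correction term tends to $0$ locally uniformly on $\Omega$; its hypothesis $R_n^\Xi\to R$ is provided by \cref{lem:ratio-asymptotics}. The same holds for $\Lambda$. It therefore suffices to prove that $\frac1n\widetilde{\Xi}_n'/\widetilde{\Xi}_n$ and $\frac1n\widetilde{\Lambda}_n'/\widetilde{\Lambda}_n$ converge locally uniformly on $\Omega$ to the stated limit. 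This limit does not depend on $c,d$, which is consistent with the claim.

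Write $s=\sqrt z$. I would use the explicit representation from the proof of \cref{lem:ratio-asymptotics}:
\[
\widetilde{\Xi}_n(z)=\kappa_n\,(1+s)^{2n-1}s^{-1}B_{2n-1}(u(z)).
\]
Taking logarithmic derivatives, and using $ds/dz=1/(2s)$ and $u'(z)=\frac{1}{s(1+s)^2}$, gives
\[
\frac{\widetilde{\Xi}_n'}{\widetilde{\Xi}_n}
=\frac{2n-1}{2s(1+s)}-\frac{1}{2z}+u'(z)\,\frac{B_{2n-1}'(u)}{B_{2n-1}(u)}.
\]
After division by $n$, the first term tends to $\frac{1}{s(1+s)}$ and the second tends to $0$, both locally uniformly. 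The $\Lambda$ case is identical, with $A_{2n}$ in place of $B_{2n-1}$; in both cases the index is $m\sim 2n$, which produces a factor $2$.

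The key step is the asymptotics of $\frac1m B_m'(w)/B_m(w)$ and $\frac1m A_m'(w)/A_m(w)$ for $w$ in $D^*$. I would obtain these from the quotient asymptotics already used in \cref{lem:ratio-asymptotics}, together with the standard Eulerian recurrences
\[
A_{m+1}(w)=(1+mw)A_m(w)+w(1-w)A_m'(w),
\]
\[
B_{m+1}(w)=\bigl((2m+1)w+1\bigr)B_m(w)+2w(1-w)B_m'(w).
\]
Solving for $A_m'/A_m$ and $B_m'/B_m$, dividing by $m$, and inserting $\frac1m A_{m+1}/A_m\to (1-w)/(-\log w)$ and $\frac1m B_{m+1}/B_m\to 2(1-w)/(-\log w)$ gives
\[
\frac1m\frac{A_m'(w)}{A_m(w)}\to\frac{1}{w(1-w)}\Bigl(\frac{1-w}{-\log w}-w\Bigr),
\]
and the same limit for $B_m$. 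Hence $\frac1n u'\,B_{2n-1}'(u)/B_{2n-1}(u)\to 2u'(z)\bigl(-\tfrac{1-u}{\log u}-u\bigr)/(u(1-u))$.

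Finally I would simplify using $1-u=\frac{2}{1+s}$ and $u(1-u)=\frac{2(s-1)}{(1+s)^2}$. Then
\[
\frac{2u'}{u(1-u)}=\frac{1}{s(s-1)},
\]
so the term becomes $\frac{1}{s(1-s)}\bigl(u+\frac{1-u}{\log u}\bigr)$, which is exactly the stated limit.

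Local uniformity comes from the fact that $u$ maps compact subsets of $\Omega$ to compact subsets of $D^*$, where the Eulerian quotient asymptotics hold uniformly. Some care is needed, however. The convergence of the quotients is an assumed input, as in \cref{lem:ratio-asymptotics}. One must also make sure the quantities are defined, i.e.\ that $B_{2n-1}(u)$, $A_{2n}(u)$ and $P_n$ do not vanish on the compact set for large $n$; I would argue this from the nonvanishing of the limits, as in \cref{lem:correction-term-vanishes}. I expect this bookkeeping, together with checking the constants in the $B_m$ recurrence, to be the main obstacle; the remainder is algebra.
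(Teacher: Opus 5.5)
Your proof is correct. It shares the paper's handling of the correction term (\cref{lem:log-derivative-decomposition}, \cref{lem:ratio-asymptotics}, \cref{lem:correction-term-vanishes}) but treats the principal term differently.

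\emph{How the routes differ.} The paper never computes $\frac1n\widetilde{\Xi}_n'/\widetilde{\Xi}_n$ itself. It imports the limit of $\frac{1}{n-1}\widetilde{\Xi}_n'/\widetilde{\Xi}_n$ (and its $\Lambda$ analogue) from Lemma~16.1 of the companion paper, then multiplies by $(n-1)/n\to1$. You derive that limit directly from the explicit Eulerian representation. Your key device is to use the recurrences to write $A_m'/A_m$ and $B_m'/B_m$ as affine functions of $A_{m+1}/A_m$ and $B_{m+1}/B_m$.

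\emph{What your route buys.} The device transfers the locally uniform quotient asymptotics already invoked in \cref{lem:ratio-asymptotics} to the logarithmic derivatives, without differentiating an asymptotic relation. So your argument is self-contained relative to this paper. It also shows where the two pieces of the limit come from: the prefactor $(1+\sqrt z)^{2n-1}$ gives $\frac{1}{\sqrt z(1+\sqrt z)}$, and the Eulerian factor gives the rest. And it explains why $\Xi$ and $\Lambda$ share the same limit: the extra factor $2$ in the type~$B$ quotient limit, together with the $(2m+1)w$ term, cancels the factor $2$ in front of $w(1-w)B_m'$ in the type~$B$ recurrence. The paper's route is shorter but defers all of this to an external lemma.

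\emph{Constants.} The worry you flagged is unfounded. The explicit formulae with $\widetilde{\Xi}_1=\frac14$ and $\widetilde{\Lambda}_1=\frac17$ force the normalizations $B_1(t)=1+t$ and $A_2(t)=1+t$, and these are exactly the conventions for which your two recurrences hold. In any case the limit is unchanged under $A_m\mapsto tA_m$. The simplification $2u'/(u(1-u))=1/(\sqrt z(\sqrt z-1))$ is also correct.

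\emph{Bookkeeping.} Nonvanishing of $B_{2n-1}(u(z))$ and $A_{2n}(u(z))$ is best obtained not from limits, since a nonzero quotient limit does not by itself exclude zeros of $B_m$. Instead use real-rootedness: Eulerian polynomials of types $A$ and $B$ have only negative real zeros, and $u(\Omega)\subset D^*$ contains no negative reals. Nonvanishing of $P_n$ on a compact $K$ for large $n$ is then exactly the bound $|\alpha_n^\Xi R_n^\Xi+\beta^\Xi|\ge|\alpha_n^\Xi|\,m_K/4$ from \cref{lem:correction-term-vanishes}.
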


\begin{proof}
We prove the statement for the \(\Xi\)-sequence; the proof for the \(\Lambda\)-sequence is identical.

By \cref{lem:log-derivative-decomposition}, for every \(n\ge2\),
\[
s_n^\Xi(z)
=
\frac1n\frac{\widetilde{\Xi}_n'(z)}{\widetilde{\Xi}_n(z)}
+
\frac1n\,
\frac{\alpha_n^\Xi (R_n^\Xi)'(z)}
{\alpha_n^\Xi R_n^\Xi(z)+\beta^\Xi},
\]
where
\[
R_n^\Xi(z)=\frac{\widetilde{\Xi}_{n+1}(z)}{\widetilde{\Xi}_n(z)},
\qquad
\alpha_n^\Xi=\frac{4n(2n+1)}{3}\,c,
\qquad
\beta^\Xi=d-\frac56c.
\]

By \cref{lem:ratio-asymptotics}, the sequence \(R_n^\Xi\) converges locally uniformly on \(\Omega\) to the holomorphic function
\[
R(z):=-\frac{1}{(\log u(z))^2}.
\]
Hence, by \cref{lem:correction-term-vanishes},
\[
\frac1n\,
\frac{\alpha_n^\Xi (R_n^\Xi)'(z)}
{\alpha_n^\Xi R_n^\Xi(z)+\beta^\Xi}
\longrightarrow 0
\]
locally uniformly on \(\Omega\).

It remains to identify the limit of the principal term \(\displaystyle \frac1n\frac{\widetilde{\Xi}_n'(z)}{\widetilde{\Xi}_n(z)}\). From \cite[Lemma~16.1]{TallaWaffo2026arxiv2602.16761}, one has
\[
\frac{1}{n-1}\frac{\widetilde{\Xi}_n'(z)}{\widetilde{\Xi}_n(z)}
\longrightarrow
\frac{1}{\sqrt z(1+\sqrt z)}
+
\frac{1}{\sqrt z(1-\sqrt z)}
\left(
u(z)+\frac{1-u(z)}{\log u(z)}
\right)
\]
locally uniformly on \(\Omega\). Therefore,
\[
\frac1n\frac{\widetilde{\Xi}_n'(z)}{\widetilde{\Xi}_n(z)}
=
\frac{n-1}{n}
\left(
\frac{1}{n-1}\frac{\widetilde{\Xi}_n'(z)}{\widetilde{\Xi}_n(z)}
\right)
\longrightarrow
\frac{1}{\sqrt z(1+\sqrt z)}
+
\frac{1}{\sqrt z(1-\sqrt z)}
\left(
u(z)+\frac{1-u(z)}{\log u(z)}
\right)
\]
locally uniformly on \(\Omega\), since \((n-1)/n\to1\).

Combining the last two convergences yields
\[
s_n^\Xi(z)\longrightarrow
\frac{1}{\sqrt z(1+\sqrt z)}
+
\frac{1}{\sqrt z(1-\sqrt z)}
\left(
u(z)+\frac{1-u(z)}{\log u(z)}
\right)
\]
locally uniformly on \(\Omega\).

The proof for the \(\Lambda\)-sequence is the same. By \cref{lem:log-derivative-decomposition},
\[
s_n^\Lambda(z)
=
\frac1n\frac{\widetilde{\Lambda}_n'(z)}{\widetilde{\Lambda}_n(z)}
+
\frac1n\,
\frac{\alpha_n^\Lambda (R_n^\Lambda)'(z)}
{\alpha_n^\Lambda R_n^\Lambda(z)+\beta^\Lambda},
\]
and by \cref{lem:ratio-asymptotics,lem:correction-term-vanishes}, the second term tends to \(0\) locally uniformly on \(\Omega\). Moreover, \cite[Lemma~16.1]{TallaWaffo2026arxiv2602.16761} gives
\[
\frac{1}{n-1}\frac{\widetilde{\Lambda}_n'(z)}{\widetilde{\Lambda}_n(z)}
\longrightarrow
\frac{1}{\sqrt z(1+\sqrt z)}
+
\frac{1}{\sqrt z(1-\sqrt z)}
\left(
u(z)+\frac{1-u(z)}{\log u(z)}
\right)
\]
locally uniformly on \(\Omega\), and hence again
\[
\frac1n\frac{\widetilde{\Lambda}_n'(z)}{\widetilde{\Lambda}_n(z)}
\longrightarrow
\frac{1}{\sqrt z(1+\sqrt z)}
+
\frac{1}{\sqrt z(1-\sqrt z)}
\left(
u(z)+\frac{1-u(z)}{\log u(z)}
\right)
\]
locally uniformly on \(\Omega\). This proves the claim.
\end{proof}

\vspace{1cm}

\Cref{thm:log-derivative-general-sequences} shows that the normalized
logarithmic derivatives of the sequences \((P_n^\Xi)_{n\ge2}\) and
\((P_n^\Lambda)_{n\ge2}\) converge locally uniformly on \(\Omega\) to the same
limit as the corresponding normalized logarithmic derivatives of the auxiliary
families \((\widetilde{\Xi}_n)_{n\ge1}\) and
\((\widetilde{\Lambda}_n)_{n\ge1}\). Consequently, all conclusions established
in Part~V of \cite{TallaWaffo2026arxiv2602.16761} for the auxiliary families
carry over verbatim to the sequences \((P_n^\Xi)\) and \((P_n^\Lambda)\).

\begin{corollary}\label{cor:transport-zero-distribution}
Assume that \(c\neq 0\). Then the empirical zero measures associated with
\((P_n^\Xi)_{n\ge2}\) and \((P_n^\Lambda)_{n\ge2}\) converge weakly to the same
probability measure \(\mu\) on \((0,1)\) as in Part~V of
\cite{TallaWaffo2026arxiv2602.16761}.
\end{corollary}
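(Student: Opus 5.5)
The plan is to pass from the convergence of normalized logarithmic derivatives in \cref{thm:log-derivative-general-sequences} to weak convergence of zero counting measures, using the Cauchy transform. Write $\nu_n$ for the normalized zero counting measure of $P_n^\Xi$ (respectively $P_n^\Lambda$). The proof has four steps.

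\emph{Step 1: degree and support.} By the degree computation in Section~1, $\deg P_n=n$, since $c\neq0$ and $A_k,B_k\neq0$. Then
\[
\frac{1}{\deg P_n}\frac{P_n'(z)}{P_n(z)}=\int\frac{d\nu_n(t)}{z-t}.
\]
To localize the zeros, use \cref{prop:closed-form-iterates}: each $P_n$ is a linear combination $\alpha_n\widetilde{\Xi}_{n+1}+\beta\widetilde{\Xi}_n$ of two consecutive auxiliary polynomials. The auxiliary zeros lie in $(0,1)$ and interlace, by Part~V of the earlier paper and by \cref{prop:D-interval-0b}. Real linear combinations of interlacing polynomials of degrees $n$ and $n-1$ are real-rooted. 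Hence all but at most one zero of $P_n$ lie in $[0,1]$. The possible exceptional zero carries mass $1/n\to0$, so it does not affect weak limits. If $3/7<d/c<1$, \cref{prop:D-interval-0b} applied with $b=1$ already places every zero in $(0,1)$. So all $\nu_n$, up to mass $O(1/n)$, are probability measures supported in a fixed compact set $[0,1]$, and the family is tight.

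\emph{Step 2: identify subsequential limits.} Take any weakly convergent subsequence $\nu_{n_k}\to\nu$. For $z$ at positive distance from $[0,1]$ the kernel $t\mapsto 1/(z-t)$ is bounded and continuous on the support. Therefore the Cauchy transforms converge pointwise off $[0,1]$, in particular on $\Omega$.

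\emph{Step 3: compare with the auxiliary limit.} By \cref{thm:log-derivative-general-sequences} (together with $\deg P_n/n\to1$), the Cauchy transform of $\nu$ equals, on $\Omega$, the limit function computed there. By the same argument applied to the auxiliary families, this limit is also the Cauchy transform of $\mu$. Thus $C_\nu=C_\mu$ on $\Omega$. Both transforms are holomorphic on $\mathbb C\setminus[0,1]$, which is connected and contains the open set $\Omega$, so they agree there.

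\emph{Step 4: conclude.} A compactly supported measure on $\mathbb R$ is determined by its Cauchy transform off its support: both transforms have the same Laurent expansion at $\infty$, hence the same moments, hence the same measure. So $\nu=\mu$. Since every subsequential limit equals $\mu$ and the family is tight, the whole sequence converges weakly to $\mu$. This holds for both $\Xi$ and $\Lambda$.

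The main obstacle is Step 1, controlling the zero locations uniformly for arbitrary $c,d$. I would handle it with the interlacing/real-rootedness argument for linear combinations. Failing that, Step 1 could rest on the weaker fact that $\nu_n$ has uniformly bounded support: the coefficient ratios of $P_n$ are controlled by those of the auxiliary polynomials. Vague convergence of Cauchy transforms on an exterior domain still pins down the limit.
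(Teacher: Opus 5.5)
Your proof is correct, and its core idea is the paper's. The paper's proof is one step: by \cref{thm:log-derivative-general-sequences}, the Stieltjes transforms of the zero measures of $P_n^\Xi$ and $P_n^\Lambda$ have the same limit on $\Omega$ as those of $\widetilde{\Xi}_n$ and $\widetilde{\Lambda}_n$, and the weak-convergence argument of Part~V of the earlier paper is then said to carry over unchanged.

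What you add is the tightness input that this transfer needs, and it is not automatic. Part~V concerns polynomials with all zeros in $(0,1)$, but for general $c,d$ that fails. For example, $c=1$, $d=10$ gives $P_2^\Xi$ proportional to $20x^2-85x+56$, which has a zero near $3.43$. Your Step~1 closes this. You combine \cref{prop:closed-form-iterates} with the interlacing of consecutive auxiliary polynomials to show that at most one zero of $P_n$ lies outside $(0,1)$. That interlacing is available inside this paper: apply the interlacing proposition to the sequence started at $\widetilde{\Xi}_2\propto x-\tfrac56$, resp. $\widetilde{\Lambda}_2\propto x-\tfrac23$. The paper's route buys brevity. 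Yours gives an argument valid for arbitrary real $c\neq0$ and $d$. You are also right to make $A_k,B_k\neq0$ explicit, since otherwise $P_n\equiv0$.

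One step needs a small repair. In Step~2 you say the kernel $t\mapsto 1/(z-t)$ is bounded on $\operatorname{supp}\nu_n$ for all $z$ off $[0,1]$. That is false for real $z\in(1,\infty)\subset\Omega$: your own exceptional zero may sit at or near $z$, and then $s_n$ has a pole there. The fix is to work with $z\in\Omega\setminus\mathbb R$ instead:
\begin{itemize}
\item There $|1/(z-t)|\le 1/|\operatorname{Im} z|$ for all $t\in\mathbb R$, so weak convergence gives $C_{\nu_{n_k}}(z)\to C_\nu(z)$ directly.
\item $P_n(z)\neq0$ there by real-rootedness, so $s_n(z)$ is defined.
\item Equality of $C_\nu$ and $C_\mu$ on the open set $\Omega\setminus\mathbb R$ is enough for your identity-theorem step.
\end{itemize}
Also state explicitly why $C_\nu$ is holomorphic on $\mathbb C\setminus[0,1]$ in Step~3. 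Since $\nu_n(\mathbb R\setminus[0,1])\le 1/n$, the portmanteau theorem gives $\nu(\mathbb R\setminus[0,1])=0$, so $\nu$ is carried by $[0,1]$.
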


\begin{proof}
By \cref{thm:log-derivative-general-sequences}, the normalized
logarithmic derivatives of \(P_n^\Xi\) and \(P_n^\Lambda\) have the same
locally uniform limit on \(\Omega\) as those of the auxiliary families
\(\widetilde{\Xi}_n\) and \(\widetilde{\Lambda}_n\). Therefore the corresponding
Stieltjes transforms have the same limit, and the weak convergence statement
proved in Part~V of \cite{TallaWaffo2026arxiv2602.16761} carries over unchanged.
\end{proof}

\begin{corollary}\label{cor:transport-density-cdf}
The limiting measure \(\mu\) of \cref{cor:transport-zero-distribution} is
absolutely continuous with respect to Lebesgue measure and has density
\[
\rho(x)=
\begin{cases}
\displaystyle
\frac{2}{\sqrt{x}(1-x)\left(\log^2\!\left(\frac{1-\sqrt{x}}{1+\sqrt{x}}\right)+\pi^2\right)},
& 0<x<1,\\[1em]
0, & x\notin(0,1).
\end{cases}
\]
Its distribution function is given by
\[
F(x)=
\begin{cases}
0, & x\le 0,\\[0.4em]
\displaystyle
\frac{2}{\pi}\arctan\!\left(
\frac{1}{\pi}\log\!\left(\frac{1+\sqrt{x}}{1-\sqrt{x}}\right)
\right),
& 0<x<1,\\[0.8em]
1, & x\ge 1.
\end{cases}
\]
In particular, the empirical distribution functions associated with the real
zeros of \(P_n^\Xi\) and \(P_n^\Lambda\) converge pointwise to \(F\) at every
continuity point of \(F\).
\end{corollary}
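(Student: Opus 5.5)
The limiting measure \(\mu\) is already identified, via \cref{thm:log-derivative-general-sequences} and \cref{cor:transport-zero-distribution}, through its Stieltjes transform. So I would recover \(\mu\) from the limit
\[
S(z)=\frac{1}{\sqrt z(1+\sqrt z)}+\frac{1}{\sqrt z(1-\sqrt z)}\Bigl(u(z)+\frac{1-u(z)}{\log u(z)}\Bigr)
\]
by the Stieltjes--Perron inversion formula. Since all zeros of \(P_n\) lie in \([0,1]\), we have \(\frac1n P_n'/P_n(z)=\int d\mu_n(t)/(z-t)\). Hence \(S(z)=\int d\mu(t)/(z-t)\) on \(\Omega\), and the density is
\[
\rho(x)=-\frac1\pi\lim_{\varepsilon\downarrow0}\operatorname{Im}S(x+i\varepsilon),\qquad 0<x<1.
\]
The first term of \(S\) extends analytically across \((0,1)\) and takes real values there, so it contributes nothing. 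The same holds for the summand \(u/(\sqrt z(1-\sqrt z))\). For this I would check that \(u/(1-\sqrt z)=-1/(1+\sqrt z)\), so it is in fact regular, and this also shows that \(S\) has no pole at \(z=1\).

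The key computation is the boundary value of \(\log u\). For \(x\in(0,1)\) and \(z=x+i0\), the branch gives \(\sqrt z\to\sqrt x\in(0,1)\), so \(u\to-\frac{1-\sqrt x}{1+\sqrt x}\) from the side determined by the sign of \(\operatorname{Im}u\). I expect \(\operatorname{Im} u>0\) when \(\operatorname{Im} z>0\), because \(u'\) is positive at real points. This gives \(\log u(x+i0)=L(x)+i\pi\) with \(L(x)=\log\frac{1-\sqrt x}{1+\sqrt x}<0\). Using \(1-u=2/(1+\sqrt z)\), the remaining term is
\[
\frac{2}{\sqrt x(1-x)}\cdot\frac{1}{L+i\pi},
\qquad
\operatorname{Im}\frac{1}{L+i\pi}=\frac{-\pi}{L^2+\pi^2}.
\]
Multiplying by \(-1/\pi\) gives exactly the claimed \(\rho\). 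This sign and branch bookkeeping is the main obstacle. I would verify it by checking that \(\rho>0\) and by testing a point. Outside \([0,1]\), \(S\) is analytic and real on \((-\infty,0)\), since \(\sqrt z\) becomes purely imaginary but the expression is even in \(\sqrt z\) after simplification; therefore \(\rho=0\) there. I would also rule out atoms at \(0\) and \(1\) by showing \(zS(z)\to0\) as \(z\to0\) and \((z-1)S(z)\to0\) as \(z\to1\). Near \(0\), \(\log u\to i\pi\) and \(S=O(|z|^{-1/2})\); near \(1\), \(1/\log u\to0\). In both cases \(\mu\) has no point mass there, so \(\mu\) is absolutely continuous.

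For \(F\), set \(t=\sqrt x\) and \(w=\frac1\pi\log\frac{1+t}{1-t}=-L/\pi\). Then \(dw=\frac{2\,dt}{\pi(1-t^2)}=\frac{dx}{\pi\sqrt x(1-x)}\), and
\[
\rho\,dx=\frac{2}{\pi}\frac{dw}{1+w^2},
\]
so \(F(x)=\frac2\pi\arctan w\). This expression is \(0\) at \(x=0\) and tends to \(1\) as \(x\to1\), which confirms total mass \(1\). Finally, because \(F\) is continuous, weak convergence from \cref{cor:transport-zero-distribution} gives pointwise convergence of the empirical distribution functions at every point by the portmanteau theorem.
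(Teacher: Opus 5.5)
Your proof is correct, but it takes a genuinely different route from the paper. The paper computes nothing here. It observes, via \cref{cor:transport-zero-distribution}, that \(\mu\) coincides with the limit measure of the auxiliary families, and then imports the density and distribution function already derived in Part~V of \cite{TallaWaffo2026arxiv2602.16761}. You instead recover \(\mu\) directly from the limit \(S\) of \cref{thm:log-derivative-general-sequences} by Stieltjes--Perron inversion. You then show there is no mass on \((-\infty,0)\cup(1,\infty)\) and no atoms at \(0\) or \(1\), and integrate \(\rho\) via \(w=\frac1\pi\log\frac{1+\sqrt x}{1-\sqrt x}\). Your version is self-contained within this paper and independently checks the formulas (positivity of \(\rho\), \(F(1^-)=1\), absolute continuity), at the cost of branch bookkeeping that the citation avoids. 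The paper's version is shorter but rests entirely on the external computation. Your branch analysis is right, and it can be streamlined. Since \(u/(1-\sqrt z)=-1/(1+\sqrt z)\), the first two summands of \(S\) cancel identically, so \(S(z)=\frac{2}{\sqrt z\,(1-z)\log u(z)}\). From this form the boundary value \(\frac{2}{\sqrt x(1-x)(L+i\pi)}\) on \((0,1)\), the evenness of \(\sqrt z\log u\) in \(\sqrt z\) on \((-\infty,0)\), and the endpoint estimates at \(0\) and \(1\) are all immediate.

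One premise should be dropped: it is not true for all \(c\neq0\), \(d\) that the zeros of \(P_n\) lie in \([0,1]\). For \(c=1\), \(d=10\) one gets \(P_2^\Xi\propto 20x^2-85x+56\), which has a zero near \(3.43\). More generally, \(\mathcal D_\Xi[p](1)=p(1)\) and \(\mathcal D_\Lambda[p](1)=4p(1)\), while both operators multiply leading coefficients by positive factors. Hence \(P_n\) has a zero in \((1,\infty)\) for every \(n\) whenever \(d/c>1\). You do not need the premise. The identity \(\frac1n P_n'/P_n(z)=\int d\mu_n(t)/(z-t)\) holds whenever \(P_n(z)\neq0\). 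For \(z\notin\mathbb R\), the function \(t\mapsto 1/(z-t)\) is bounded and continuous on \(\mathbb R\), so the weak convergence of \cref{cor:transport-zero-distribution} already gives \(S(z)=\int d\mu(t)/(z-t)\) on \(\mathbb C\setminus\mathbb R\). That is all the inversion formula uses.
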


\begin{proof}
This is exactly the density and distribution function obtained in Part~V of
\cite{TallaWaffo2026arxiv2602.16761} for the auxiliary families. By
\cref{cor:transport-zero-distribution}, the limiting measure is the same for
\((P_n^\Xi)\) and \((P_n^\Lambda)\), so the same formulae apply here.
\end{proof}

\begin{corollary}\label{cor:transport-quantiles}
Let
\[
0<x_{1,n}^\Xi<\cdots<x_{n,n}^\Xi<1
\qquad\text{and}\qquad
0<x_{1,n}^\Lambda<\cdots<x_{n,n}^\Lambda<1
\]
denote the ordered real zeros of \(P_n^\Xi\) and \(P_n^\Lambda\), respectively.
Then their asymptotic distribution is governed by the same quantile law:
\[
F(x_{k,n}^\Xi)\approx \frac{k}{n},
\qquad
F(x_{k,n}^\Lambda)\approx \frac{k}{n},
\qquad n\to\infty.
\]
Equivalently,
\[
x_{k,n}^\Xi \approx F^{-1}\!\left(\frac{k}{n}\right),
\qquad
x_{k,n}^\Lambda \approx F^{-1}\!\left(\frac{k}{n}\right).
\]
Since \(F\) is explicit, this may be written more concretely as
\[
x_{k,n}^\Xi \approx
\tanh^2\!\left(
\frac{\pi}{2}\tan\!\left(\frac{\pi k}{2n}\right)
\right),
\qquad
x_{k,n}^\Lambda \approx
\tanh^2\!\left(
\frac{\pi}{2}\tan\!\left(\frac{\pi k}{2n}\right)
\right).
\]
\end{corollary}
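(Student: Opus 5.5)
Combining \cref{cor:transport-zero-distribution} with \cref{cor:transport-density-cdf} gives the quantile law, once we know the zeros of \(P_n^\Xi\) and \(P_n^\Lambda\) are real and lie in \((0,1)\). First we justify the labelling \(0<x_{1,n}<\cdots<x_{n,n}<1\). Assume \(c\neq0\) and \(d/c\in(0,1)\); this holds in the interlacing regimes \(3/7<d/c<1\), resp.\ \(1/3<d/c<1\). Then \(P_1\) has its zero in \((0,1)\). By \cref{prop:D-interval-0b} with \(b=1\) and induction on \(n\), every \(P_n^\Xi\) and \(P_n^\Lambda\) has all zeros real and in \((0,1)\). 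Since \(\mathcal D_\Xi\) and \(\mathcal D_\Lambda\) raise the degree by exactly one, \(P_n\) has \(n\) zeros. The strict interlacing proposition makes them simple. Outside these regimes we read the statement only for \(n\) large, with the ordered list taken over the real zeros. (Note the degree is \(n\), while \(\widetilde{\Xi}_n\) has degree \(n-1\); the discrepancy is immaterial at scale \(1/n\).)

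Next we turn the weak convergence \(\mu_n:=\frac1n\sum_k\delta_{x_{k,n}}\to\mu\) into the quantile statement. By \cref{cor:transport-density-cdf}, \(F\) is continuous on \(\mathbb R\). The density \(\rho\) is strictly positive on \((0,1)\), so \(F\) is strictly increasing there. Pólya's theorem (weak convergence to a continuous limit distribution is uniform convergence of distribution functions) gives \(\sup_x|F_n(x)-F(x)|\to0\), where \(F_n\) is the empirical distribution function. At the zero \(x_{k,n}\) one has \(F_n(x_{k,n})=k/n\), using simplicity of the zeros. Hence \(\max_{1\le k\le n}|F(x_{k,n})-k/n|\to0\), which is the precise meaning we give to \(F(x_{k,n})\approx k/n\).

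To pass to \(x_{k,n}\approx F^{-1}(k/n)\), note that \(F:(0,1)\to(0,1)\) is a homeomorphism with continuous inverse, uniformly continuous on \([0,1]\) after extending by \(F^{-1}(0)=0\) and \(F^{-1}(1)=1\). Therefore \(\max_k|x_{k,n}-F^{-1}(k/n)|\to0\). The explicit inverse comes from solving \(y=\frac2\pi\arctan\bigl(\frac1\pi\log\frac{1+\sqrt x}{1-\sqrt x}\bigr)\):
\[
\log\frac{1+\sqrt x}{1-\sqrt x}=\pi\tan\Bigl(\frac{\pi y}{2}\Bigr),
\qquad
\frac{1+\sqrt x}{1-\sqrt x}=e^{2w}\ \text{with } w=\tfrac\pi2\tan(\tfrac{\pi y}{2}),
\]
so \(\sqrt x=\tanh w\). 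With \(y=k/n\) this yields the displayed \(\tanh^2\) formula.

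The main obstacle is the first step: guaranteeing that all zeros are real, simple and in \((0,1)\) for arbitrary \(c,d\). If \(d/c\notin(0,1)\), the operator-preservation results do not apply, and there may be one stray zero; the closed formula \(\alpha_n\widetilde{\Xi}_{n+1}+\beta\widetilde{\Xi}_n\) suggests an outlier near a root of \(R_n\approx-\beta/\alpha_n\). I would first handle the case \(d/c\in(0,1)\) cleanly as above. Otherwise I would note that \(o(n)\) exceptional zeros do not affect \(\sup|F_n-F|\to0\), so the quantile law persists for the ordered real zeros in \((0,1)\) up to an index shift of \(O(1)\), which is negligible after dividing by \(n\).
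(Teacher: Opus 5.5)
Your proposal is correct and follows the paper's route: weak convergence from \cref{cor:transport-zero-distribution,cor:transport-density-cdf}, then the quantile interpretation for the continuous, strictly increasing \(F\), then explicit inversion to the \(\tanh^2\) formula. It makes that route rigorous via P\'olya's uniform convergence theorem and uniform continuity of \(F^{-1}\), and it supplies, via \cref{prop:D-interval-0b} under \(0<d/c<1\), the localization of the zeros in \((0,1)\) that the paper merely asserts, which genuinely fails for \(d/c>1\) (then \(P_n(1)\) and the leading coefficient of \(P_n\) have opposite signs, forcing a zero in \((1,\infty)\)).

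One small correction: for \(0<d/c\le 3/7\) (resp.\ \(0<d/c\le 1/3\) for \(\Lambda\)) simplicity of the zeros should come from \cref{lem:An-interlacing,lem:Bn-interlacing} rather than the interlacing proposition, and it is not actually needed, since counting with multiplicity still gives \(|F(x_{k,n})-k/n|\le\sup_x|F_n(x)-F(x)|+1/n\).
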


\begin{proof}
By \cref{cor:transport-density-cdf}, the empirical zero measures of
\(P_n^\Xi\) and \(P_n^\Lambda\) converge weakly to the probability measure with
distribution function \(F\). Since \(P_n^\Xi\) and \(P_n^\Lambda\) have degree
\(n\), each of them has \(n\) real zeros in \((0,1)\), counted with
multiplicity. Therefore the corresponding empirical distribution functions are
\[
\frac{1}{n}\#\{j:\ x_{j,n}^\Xi\le x\}
\qquad\text{and}\qquad
\frac{1}{n}\#\{j:\ x_{j,n}^\Lambda\le x\}.
\]
Hence the usual quantile interpretation yields
\[
F(x_{k,n}^\Xi)\approx \frac{k}{n},
\qquad
F(x_{k,n}^\Lambda)\approx \frac{k}{n},
\]
as \(n\to\infty\). Solving for \(x_{k,n}^\Xi\) and \(x_{k,n}^\Lambda\) gives the
stated approximation by \(F^{-1}(k/n)\). Using the explicit formula for
\(F^{-1}\) then yields
\[
F^{-1}(t)=\tanh^2\!\left(\frac{\pi}{2}\tan\!\left(\frac{\pi t}{2}\right)\right),
\qquad 0<t<1,
\]
which proves the final formulae.
\end{proof}

\section*{Acknowledgments}
The author acknowledges the use of an AI language model for assistance with literature search, presentation of the manuscript, verification of results and clarifying standard probability notions.

\printbibliography

\end{document}

%% file: preamble.tex
\usepackage[margin=0.8in]{geometry}
\usepackage{amsmath, amssymb, amsfonts, mathtools}
\usepackage{amsthm}
\usepackage{mathrsfs}
\usepackage{stmaryrd}
\usepackage{esint}
\usepackage{eqnalign}

\usepackage{graphicx}
\usepackage{enumitem}
\usepackage[most]{tcolorbox}
\usepackage{float}
\usepackage{listings}
\usepackage{array}
\usepackage{booktabs}
\usepackage{xcolor}
\usepackage{emptypage}
\usepackage{tensor}
\usepackage{tabularx}

\usepackage{fancyhdr}
\usepackage{hyperref}
\usepackage[nameinlink,noabbrev]{cleveref}

\usepackage{titlesec}
\usepackage[T1]{fontenc}
\usepackage[utf8]{inputenc}
\usepackage{lmodern}
\usepackage{titling}

\usepackage[backend=biber,style=numeric]{biblatex}
\providecommand{\HeadTitle}{} 
\providecommand{\HeadTitleTwo}{} 
\providecommand{\HeadAuthor}{Luc Ramsès TALLA WAFFO} 

\titleformat{\section}[block]
  {\normalfont\large\bfseries\itshape\centering}
  {§\thesection.}
  {1em}
  {}

\titleformat{\subsection}
  {\normalfont\normalsize\bfseries}
  {\thesubsection}
  {1em}
  {}

\newtheorem{theorem}{Theorem}[section]
\newtheorem{lemma}[theorem]{Lemma}
\newtheorem{proposition}[theorem]{Proposition}
\newtheorem{corollary}[theorem]{Corollary}

\newtheorem{remark}[theorem]{Remark}
\newtheorem{definition}[theorem]{Definition}

\crefname{example}{example}{examples}
\Crefname{example}{Example}{Examples}

\crefname{corollary}{corollary}{corollaries}
\Crefname{corollary}{Corollary}{Corollaries}

\crefname{definition}{definition}{definitions}
\Crefname{definition}{Definition}{Definitions}

\crefname{remark}{remark}{remarks}
\Crefname{remark}{Remark}{Remarks}

\crefname{conjecture}{conjecture}{conjectures}
\Crefname{conjecture}{Conjecture}{Conjectures}

\crefname{lemma}{lemma}{lemmas}
\Crefname{lemma}{Lemma}{Lemmas}

\crefname{proposition}{proposition}{propositions}
\Crefname{proposition}{Proposition}{Propositions}

\crefname{theorem}{theorem}{theorems}
\Crefname{theorem}{Theorem}{Theorems}

\numberwithin{equation}{section}

%% file: font.tex
\usepackage{fontspec}

%% file: titlepage.tex
\thispagestyle{fancy}

\vspace{0.2cm}

\begin{center}
\Large{\HeadTitleTwo}
\end{center}

\hspace{3cm}

\begin{center}
Luc Ramsès TALLA WAFFO \\
Technische Universität Darmstadt\\
Karolinenplatz 5, 64289 Darmstadt, Germany\\
ramses.talla@stud.tu-darmstadt.de\\
\vspace{0.5cm}
February 21, 2026
\end{center}